
\documentclass[12pt]{article}

\usepackage{graphicx, amssymb, latexsym, amsfonts, amsmath, lscape, amscd,
amsthm, color, epsfig, mathrsfs, tikz, enumerate, multirow}
\tikzset{XOR/.style={draw,circle,append after command={
        [shorten >=\pgflinewidth, shorten <=\pgflinewidth,]
        (\tikzlastnode.north) edge (\tikzlastnode.south)
        (\tikzlastnode.east) edge (\tikzlastnode.west)
        }
    }
}

\setlength{\topmargin}{-1.5cm}
\setlength{\textheight}{23cm} 
\setlength{\textwidth}{16cm}    
\setlength{\oddsidemargin}{0cm} 
\setlength{\evensidemargin}{0cm} 

\vfuzz2pt 
\hfuzz2pt 
\newtheorem{theorem}{Theorem}[section]
\newtheorem{conjecture}[theorem]{Conjecture}
\newtheorem{corollary}[theorem]{Corollary}

\newtheorem{lemma}[theorem]{Lemma}

\newtheorem{question}[theorem]{Question}

\theoremstyle{definition}
\newtheorem{definition}[theorem]{Definition}

\theoremstyle{definition}
\newtheorem{remark}[theorem]{Remark}
\newtheorem{note}[theorem]{Note}

\newcommand\DELETE[1]{}

\usetikzlibrary{decorations.pathmorphing}









\pgfdeclaredecoration{Snake}{initial}
{
  \state{initial}[switch if less than=+.625\pgfdecorationsegmentlength to final,
                  width=+.3125\pgfdecorationsegmentlength,
                  next state=down]{
    \pgfpathmoveto{\pgfqpoint{0pt}{\pgfdecorationsegmentamplitude}}
  }
  \state{down}[switch if less than=+.8125\pgfdecorationsegmentlength to end down,
               width=+.5\pgfdecorationsegmentlength,
               next state=up]{
    \pgfpathcosine{\pgfqpoint{.25\pgfdecorationsegmentlength}{-1\pgfdecorationsegmentamplitude}}
    \pgfpathsine{\pgfqpoint{.25\pgfdecorationsegmentlength}{-1\pgfdecorationsegmentamplitude}}
  }
  \state{up}[switch if less than=+.8125\pgfdecorationsegmentlength to end up,
             width=+.5\pgfdecorationsegmentlength,
             next state=down]{
    \pgfpathcosine{\pgfqpoint{.25\pgfdecorationsegmentlength}{\pgfdecorationsegmentamplitude}}
    \pgfpathsine{\pgfqpoint{.25\pgfdecorationsegmentlength}{\pgfdecorationsegmentamplitude}}
  }
  \state{end down}[width=+.3125\pgfdecorationsegmentlength,
                   next state=final]{
     \pgfpathcosine{\pgfqpoint{.15625\pgfdecorationsegmentlength}{-.5\pgfdecorationsegmentamplitude}}
     \pgfpathsine{\pgfqpoint{.15625\pgfdecorationsegmentlength}{-.5\pgfdecorationsegmentamplitude}}
  }
  \state{end up}[width=+.3125\pgfdecorationsegmentlength,
                 next state=final]{
     \pgfpathcosine{\pgfqpoint{.15625\pgfdecorationsegmentlength}{.5\pgfdecorationsegmentamplitude}}
     \pgfpathsine{\pgfqpoint{.15625\pgfdecorationsegmentlength}{.5\pgfdecorationsegmentamplitude}}
  }
  \state{final}{\pgfpathlineto{\pgfpointdecoratedpathlast}}
}


\begin{document}


\title{\bf A theoretical expansion of the {\sc Sprout} game}
\author{
{\sc Soura Sena Das}$\,^{a}$, {\sc Zin Mar Myint}$\,^{b}$, {\sc Soumen Nandi}$\,^{c}$, \\ {\sc Sagnik Sen}$\,^{b}$,  {\sc \'Eric Sopena}$\,^{d}$\\
\mbox{}\\
{\small $(a)$ Indian Statistical Institue, Kolkata, India.}\\
{\small $(b)$ Indian Institute of Technology Dharwad, India.}\\
{\small $(c)$ Institute of Engineering and Management, Kolkata, India.}\\
{\small $(d)$ Univ. Bordeaux, CNRS, Bordeaux INP, LaBRI, UMR 5800, F-33400, Talence, France}\\
}

\date{\today}

\maketitle

\begin{abstract}
{\sc Sprout} is a two-player pen and paper game 
which starts with $n$ vertices, and the players take 
turns to join two pre-existing dots by a subdivided 
edge while keeping the graph sub-cubic planar at all 
times. The first player not being able to move 
loses. A major conjecture claims that Player 1 has a 
winning strategy if and only if $n \equiv 3,4,5$ 
($\bmod~6$). The conjecture is verified until $44$, 
and a few isolated values of $n$, usually with the 
help of a computer. However, to the best of our 
understanding, not too much progress could be made towards finding a theoretical proof of the conjecture till now. 

In this article, we try to take a bottom-up approach and start building a theory around the problem. We start by expanding a related game called 
{\sc Brussels Sprout} (where dots are replaced by crosses) introduced by Conway, possibly to help the understanding of {\sc Sprout}. In particular, we introduce and study a generalized version of 
{\sc Brussels Sprout} where crosses are replaced by 
a dot having an arbitrary number of ``partial edges'' (say, general cross) coming out, and planar graphs are replaced by any (pre-decided) hereditary class of graphs. 
We study the game for forests, graphs on surfaces, and sparse planar graphs. We also do a nimber characterization of the game when the hereditary class is taken to be triangle-free planar graphs, and we have started the game with two arbitrary generalized crosses. Moreover, while studying this
particular case, we naturally stumble upon a circular version of the same game and solve a difficult nimber characterization using the method of structural induction. The above mentioned proof may potentially be one approach 
to solving the {\sc Sprout} conjecture. \end{abstract}

\noindent \textbf{Keywords:} Sprout, impartial game, nimber, planar graph, surfaces.

\section{Introduction}\label{intro}
In 1967, Conway and Paterson~\cite{gardner1967mathematical} introduced the two-player pen and paper game called {\sc Sprout}. The game starts with $n$ dots (vertices) on paper, and the players make their moves alternately. A valid move consists of connecting any dot to itself or to another dot with a curve (edge) and 
then placing a new dot on the curve drawn (subdivision).  
There are two conditions that need to be maintained during a move: the curve should not cross itself or any other curve (planarity), 
and a spot can have at most three 
curves incident to it (degree three). 
The first player that cannot make a move, loses  
(see Fig.~\ref{sproutgame} for an example). An enigmatic famous conjecture claims the winning strategy in this game~\cite{applegatecomputer}.

  \begin{conjecture}\label{conj sprouts}
  If the game of {\sc Sprout} starts with $n$ dots, then Player $1$ has a winning strategy if and only if $n \equiv 3, 4, 5\ (\bmod \ 6)$. 
  \end{conjecture}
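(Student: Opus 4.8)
The plan is to turn Conjecture~\ref{conj sprouts} into a nimber computation over a structured family of positions, in the spirit of the bottom-up program described above. Since {\sc Sprout} is a finite impartial game, the Sprague--Grundy machinery applies: every position $P$ has a nimber $\mathcal{G}(P)$, and Player~1 wins the $n$-dot game exactly when the nimber of the starting position is nonzero. The obstruction is that the set of reachable positions is enormous and carries no obvious recursive structure, so the real content is to isolate an invariant of a position that (i) behaves predictably under legal moves and (ii) determines its nimber. The classical counting identities are the natural entry point: each dot carries three ``lives'', each move consumes two and creates one, so a game started with $n$ dots lasts between $2n$ and $3n-1$ moves; combining this with the classification of alive faces and the ``pharisee'' count already constrains outcomes modulo small numbers and explains why the modulus $6$ should appear.

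Second, I would use the generalized {\sc Brussels Sprout} theory developed in this paper as the computable model. After contracting the ``committed'' part of a {\sc Sprout} position, what remains should decompose into disjoint planar regions, each being a planar arena containing dots with free partial edges --- precisely the \emph{generalized crosses} of our game, with the hereditary class taken to be planar (or triangle-free planar once subdivisions are accounted for). The key lemma to establish is a \emph{decomposition/additivity} statement: the nimber of a {\sc Sprout} position is the nim-sum of the nimbers of its regions, each region being an instance of generalized {\sc Brussels Sprout}. Granting this, the conjecture reduces to a statement about nimbers of generalized-cross positions on planar graphs, which is the object the paper has learned to control.

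Third, I would run the structural induction already exploited for the circular version of the game. One inducts on the complexity of a region (number of partial edges, number of crosses, and any genus budget) via the mex rule: a legal move either cuts a region into two along a new curve, contributing a nim-sum of two strictly smaller regions, or reduces a single region. The circular-version analysis shows how to organize such case checks so that the resulting nimber depends only on the residues of the relevant parameters; transporting that bookkeeping into the planar {\sc Sprout} arena, one expects the governing residue to be $n \bmod 6$, reproducing the pattern $3,4,5$ for nonzero nimber.

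The hard part --- and the reason the conjecture remains open --- is the decomposition lemma together with an honest treatment of planarity. In {\sc Brussels Sprout} the game length is \emph{predetermined} ($5n-2$ moves), so the outcome is a parity triviality; {\sc Sprout} has no such rigidity, since a player may choose to ``waste'' the interior of a face or ``wall off'' dots, and these choices interact globally through the planar embedding. Making the region decomposition canonical requires a well-defined way to cut a {\sc Sprout} position along its committed structure, and proving nim-additivity under that cut is essentially the combinatorial heart of the problem. I expect that step, rather than the terminal induction, to be where the bulk of the work --- and the genuine risk of the approach failing --- will lie.
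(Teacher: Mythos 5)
There is a genuine gap: what you have written is a research programme, not a proof, and the statement itself is an open conjecture --- the paper does not prove it either (it only verifies it for small $n$ by citing computer checks and proposes its Brussels-sprout/circular-sprout machinery as a \emph{potential} route). Your own closing paragraph concedes that the central step, the decomposition/nim-additivity lemma together with the reduction of each region to a generalized {\sc Brussels sprout} position, is unproved; without it nothing after the first paragraph is established. The counting facts you invoke (three lives per dot, each move consuming two and creating one, game length between $2n$ and $3n-1$) bound the length of play but do not fix its parity, let alone a residue modulo $6$: the actual length depends on how the players choose to wall off faces, which is exactly the strategic content the conjecture is about.

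Moreover, the proposed reduction to the paper's games is not merely unproved but structurally suspect. In every generalized {\sc Brussels sprout} (and {\sc Circular sprout}) position of the paper, a move closes two open tips and the crossbar creates two new ones, so the number of open tips is invariant; this is precisely why Euler's formula pins down the number of moves (Theorems~\ref{th orientable surface} and~\ref{th non-orientable surface}) and why those games are tractable. In {\sc Sprout} the new dot carries only one free life, so the total number of lives strictly decreases by one per move, and the maximum degree constraint is $3$ rather than the effective $4$ of a crossbar. Hence a contracted {\sc Sprout} region is \emph{not} an instance of $BS_n(\mathcal{F}:t_1,\dots,t_n)$, and the nimber formulas proved here (e.g.\ Theorem~\ref{th nimber} for $CS_4(\mathcal{P}_4:p,1,q,1)$) cannot be transported to it. The nim-sum over independent planar regions is fine and standard (it underlies the computer analyses cited in the paper), but the two steps that would make the argument go through --- a canonical cut whose pieces are positions of a solved game, and an induction showing the resulting nimber depends only on $n \bmod 6$ --- are exactly what is missing, so the conjecture does not follow.
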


To date, it has been possible to verify the correctness of 
the conjecture when the initial number of spots is 
$n \in \{1,2, \cdots, 44, 46, 47, 53\}$ 
(see~\cite{browne2016algorithms}). The verifications done are mostly 
using brute force case analysis~\cite{berlekamp2018winning}, and computer checks~\cite{applegatecomputer}. Therefore, finding theoretical tools to attack the conjecture is a natural open question. There have been a few approaches to attack the conjecture which can be summarized in few major points. 

\begin{itemize}
    \item A brute force analysis of the game starting with $n$ points: theoretical case analysis by hand~\cite{allen2004winning} for small values of $n$, and computer checks for large values of $n$~\cite{applegatecomputer}. The computer checks for larger values of $n$ were helped by smart theoretical tricks involving the analysis of nimber values for partially played games~\cite{lemoine2010computer,lemoine2012nimbers}. 
    
    \item Interesting properties of the game are explored in~\cite{browne2016algorithms,copper1993graph,focardi2004modular,lam1997connected}. Moreover, 
    a AI-based game play of {\sc Sprouts} is developed in~\cite{vcivzek2021implementation} which can play the game perfectly up to $11$ spots (initial condition).

      \item The game {\sc Sprout} was introduced as a game played on the plane (or the sphere) surface. Later the game was extended and analysed for 
    general surfaces, that is, orientable and 
    non-orientable surfaces having genus 
    $g \geq 0$~\cite{lemoine2008sprouts,lemoine2012nimbers}. 

    \item Analysis of similar games such as {\sc Brussels Sprout}~\cite{bezdek2023conway,cairns2007brussels} and {\sc Planted Brussels Sprout}~\cite{ji2018brussels, williams2018planted}. This approach finds its relevance by expressing partially played games.
\end{itemize}

Our approach is a blend of analysing similar games from the theoretical perspective, and thereby extending the knowledge base, and also by analysing the nimber of such games. Moreover, we present a nimber characterization of a 
non-trivial game using the method of structural induction, showing how it can become a potential tool to attack Conjecture~\ref{conj sprouts}. 

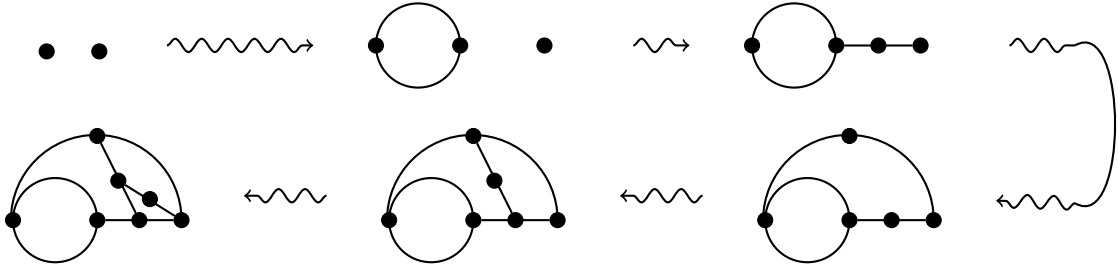
\begin{figure}
\begin{tikzpicture}[node distance=5cm]

\node (step1){\begin{tikzpicture}[scale=0.7]
\node[circle, draw,inner sep=0.1mm, minimum size=2mm, fill] (c7) at (0,-0.5) {};
\node[circle, draw,inner sep=0.1mm, minimum size=2mm, fill] (c7) at (1,-0.5) {};
\node[white,circle, draw,inner sep=0.1mm, minimum size=1mm, fill] (c7) at (2,-0.2) {};
\end{tikzpicture}};

\node (step2) [right of=step1] {\begin{tikzpicture}[scale=0.7]
\node[circle, draw,inner sep=0.1mm,minimum size=2mm, fill] (c1) at (-1.6,-0.5) {};
\node[circle, draw,inner sep=0.1mm,minimum size=2mm, fill] (c2) at (0,-0.5) {};

\draw[thick] (c2) arc[start angle=0, end angle=360, radius=0.8];
\node[circle, draw,inner sep=0.1mm,minimum size=2mm, fill] (c3) at (1.6,-0.5) {};
\node[white,circle, draw,inner sep=0.1mm, minimum size=1mm, fill] (c0) at (-2.5,-0.2) {};
\node[white,circle, draw,inner sep=0.1mm, minimum size=1mm, fill] (c7) at (3,-0.2) {};
\end{tikzpicture}};

\node (step3) [right of=step2] {\begin{tikzpicture}[scale=0.7]
\node[circle, draw,inner sep=0.1mm,minimum size=2mm, fill] (c1) at (-1.6,-0.5) {};
\node[circle, draw,inner sep=0.1mm,minimum size=2mm, fill] (c2) at (0,-0.5) {};
\draw[thick] (c2) arc[start angle=0, end angle=360, radius=0.8];
\node[circle, draw,inner sep=0.1mm,minimum size=2mm, fill] (c3) at (1.6,-0.5) {};
\draw[thick] (c2)--(c3);
\node[circle, draw,inner sep=0.1mm,minimum size=2mm, fill] (c4) at (0.8,-0.5) {};
\node[white,circle, draw,inner sep=0.1mm, minimum size=1mm, fill] (c7) at (-2.5,-0.2) {};
\node[white,circle, draw,inner sep=0.1mm, minimum size=1mm, fill] (c7) at (3,-0.2) {};
\end{tikzpicture}};

\node (step4) [below of=step3, yshift=3cm] {\begin{tikzpicture}[scale=0.7]
\node[circle, draw,inner sep=0.1mm,minimum size=2mm, fill] (c1) at (-1.6,0) {};
\node[circle, draw,inner sep=0.1mm,minimum size=2mm, fill] (c2) at (0,0) {};
\draw[thick] (c2) arc[start angle=0, end angle=360, radius=0.8];
\node[circle, draw,inner sep=0.1mm,minimum size=2mm, fill] (c3) at (1.6,0) {};

\draw[thick] (c2)--(c3);
\draw[thick] (c3) arc[start angle=0, end angle=180, radius=1.62];
\node[circle, draw,inner sep=0.1mm,minimum size=2mm, fill] (c4) at (0.8,0) {};
\node[circle, draw,inner sep=0.1mm,minimum size=2mm, fill] (c5) at (0,1.6) {};
\node[white,circle, draw,inner sep=0.1mm, minimum size=1mm, fill] (c7) at (-2.5,-0.2) {};
\node[white,circle, draw,inner sep=0.1mm, minimum size=1mm, fill] (c7) at (2.5,-0.2) {};
\end{tikzpicture}};

\node (step5) [left of=step4] {\begin{tikzpicture}
[scale=0.7]
\node[circle, draw,inner sep=0.1mm,minimum size=2mm, fill] (c1) at (-1.6,0) {};
\node[circle, draw,inner sep=0.1mm,minimum size=2mm, fill] (c2) at (0,0) {};
\draw[thick] (c2) arc[start angle=0, end angle=360, radius=0.8];
\node[circle, draw,inner sep=0.1mm,minimum size=2mm, fill] (c3) at (1.6,0) {};

\draw[thick] (c2)--(c3);
\draw[thick] (c3) arc[start angle=0, end angle=180, radius=1.62];
\node[circle, draw,inner sep=0.1mm,minimum size=2mm, fill] (c4) at (0.8,0) {};
\node[circle, draw,inner sep=0.1mm,minimum size=2mm, fill] (c5) at (0,1.6) {};   
\draw[thick] (c5) -- (c4);
\node[circle, draw,inner sep=0.1mm,minimum size=2mm, fill] (c6) at (0.4,0.75) {};
\node[white,circle, draw,inner sep=0.1mm, minimum size=1mm, fill] (c7) at (-2.5,0) {};
\node[white,circle, draw,inner sep=0.1mm, minimum size=1mm, fill] (c7) at (2.5,0) {};
\end{tikzpicture}};

\node (step6) [left of=step5] {\begin{tikzpicture}
[scale=0.7]
\node[circle, draw,inner sep=0.1mm,minimum size=2mm, fill] (c1) at (-1.6,0) {};
\node[circle, draw,inner sep=0.1mm,minimum size=2mm, fill] (c2) at (0,0) {};
\draw[thick] (c2) arc[start angle=0, end angle=360, radius=0.8];
\node[circle, draw,inner sep=0.1mm,minimum size=2mm, fill] (c3) at (1.6,0) {};

\draw[thick] (c2)--(c3);
\draw[thick] (c3) arc[start angle=0, end angle=180, radius=1.62];
\node[circle, draw,inner sep=0.1mm,minimum size=2mm, fill] (c4) at (0.8,0) {};
\node[circle, draw,inner sep=0.1mm,minimum size=2mm, fill] (c5) at (0,1.6) {};   
\draw[thick] (c5) -- (c4);
\node[circle, draw,inner sep=0.1mm,minimum size=2mm, fill] (c6) at (0.4,0.75) {};
\draw[thick] (c6) -- (c3);
\node[circle, draw,inner sep=0.1mm,minimum size=2mm, fill] (c6) at (1,0.4) {};
\node[white,circle, draw,inner sep=0.1mm, minimum size=1mm, fill] (c7) at (-2.5,0) {};
\node[white,circle, draw,inner sep=0.1mm, minimum size=1mm, fill] (c7) at (2.5,0) {};
\end{tikzpicture}};
\draw [thick, ->,decorate,decoration=snake] (step1) -- (step2);
\draw [thick, ->,decorate,decoration=snake] (step2) -- (step3);

\path [thick] (13,0) edge [bend left=120,looseness=1] (13,-2.1);

\draw[thick,decorate,decoration=snake](step3) -- (13,0);
\draw [thick, ->,decorate,decoration=snake] (13,-2.1)--(step4);
\draw [thick, ->,decorate,decoration=snake] (step4) -- (step5);
\draw [thick, ->,decorate,decoration=snake] (step5) -- (step6);

\end{tikzpicture}
\caption{An example of {\sc Sprout} game.} \label{sproutgame}
\end{figure}

\begin{remark}
    There are several instances in the history of graph theory and combinatorics research where a rich theory developed while attempting to build tools for solving 
    a particularly challenging open problem. For example, in attempt to solve the Four-Color Theorem 
    researchers developed important concepts of graph theory including vertex coloring, edge coloring, and chromatic polynomials~\cite{west2001introduction}. 
    In the field of combinatorial games, the study and analysis of the game {\sc Nim} gave rise to the theory of impartial games~\cite{albert2019lessons}. It is noteworthy, that the game {\sc Sprout} is also an impartial game. This paper attempts to initiate such a bottom up approach to develop a theory around the game of {\sc Sprout}. Our initial approach is to generalize and analyze the study of a related, and better understood game called 
    {\sc Brussels sprout} introduced by Conway~\cite{bezdek2023conway}. 
\end{remark}

\subsection{A generalization of the {\sc Brussels sprout} game}
Conway~\cite{gardner1967mathematical} introduced a variation of {\sc Sprout}, called {\sc Brussels sprout}, possibly as a potential way to approach the study of {\sc Sprout}. This is also a two player pen and paper game where, instead of dots, we start with $n$ crosses. Each cross has four open arms or \textit{open tips} and a player can only connect the open tips. So a valid move consists of connecting any two open tips with a curve, and a crossbar is placed on the newly made curve. The crossbar creates two new open tips which can be used in subsequent moves. 
This game also retains the restriction of the curves not crossing each other (planarity) during the play from {\sc Sprout}.
As a two-player game, each player makes their move on alternate turns, and the first player who cannot make a valid move loses.  However, in this case, using Euler's formula for planar graphs, one can easily figure out the player having 
a winning strategy.
In fact, the moves made by the players are redundant, and no matter how they play, the total number of moves 
and the winner of the game is a function of the number $n$ of the initial crosses only (see Fig.~\ref{Brusselgame} for an example).

In this article, we look into a generalized version of {\sc Brussels sprout}, where instead of crosses we have a variable number of open tips for each spot. 
A set of graphs $\mathcal{F}$ is called a \textit{hereditary class} if every graph isomorphic to an induced subgraph of a graph in $\mathcal{F}$ belongs to $\mathcal{F}$.
We also restrict the intermediate steps to belong to hereditary graph classes and study them. 

\begin{definition}[Generalized {\sc Brussels sprout}]
    Given a hereditary class $\mathcal{F}$, we define the game $n$-{\sc Brussels sprout} for $\mathcal{F}$ with parameters $(t_1, t_2, \cdots, t_n)$, denoting it as $BS_n(\mathcal{F}:t_1,t_2,t_3,\dots, t_n)$, as follows. The game 
$BS_n(\mathcal{F}:t_1,t_2,t_3,\dots, t_n)$ 
starts with $n$ spots, having $t_1, t_2, \cdots, t_n$ open tips, respectively. A valid move consists of joining two open tips with a curve followed by drawing a crossbar on the curve to create two new
open tips. The graph obtained by considering the spots and intersections of a curve and a crossbar as vertices, and the curves joining two such vertices as edges, must remain inside the family $\mathcal{F}$ at all times. The first player unable to provide a valid move, loses. 
\end{definition}

\subsection{Our contribution and organization of the article}
\begin{itemize}
    \item In Section~\ref{sec surface}, we  study 
the possible number of moves and winning strategies for $n$-{\sc Brussels sprout} for the families of forests, and graphs on orientable and non-orientable surfaces of genus $k \geq 0$.  We notice that the player who has a winning strategy does not change even if we increase the value of the genus for the orientable surfaces. This phenomenon is also noticed experimentally for the game {\sc Sprout}, however, probably our theoretical proof for {\sc Brussels Sprout} 
provides an explanation for the same.

\item In Section~\ref{sec sparse}, we focus on studying the generalized {\sc Brussels sprout} game for the hereditary families of sparse planar graphs. In particular, we study the game for the family $\mathcal{P}_g$ of planar graphs having girth at least $g$. One particular case, when $g=4$, attracts our attention and we study it in a deep way as usual.

\item In Section~\ref{sec circular}, we introduce a new, related game called  {\sc Circular sprout}. 
We explore a relation between a particular class of {\sc Circular sprout} and 
$BS_2(\mathcal{P}_4: p,q)$, where $\mathcal{P}_4$ is the planar graph with girth $g\geq 4$.

\item In Section~\ref{sec nimber of circular}, we present a full nimber characterization of the game {\sc Circular sprout}
$CS_4(\mathcal{P}_4: p,1,q,1)$. 
The proof is novel and uses the method of structural induction. 

\item In Section~\ref{sec nimber of BS}, we show that the nimber of $BS_4(\mathcal{P}_4: p,q)$ is always $0$ implies that the second player always has a winning strategy using the result from the section~\ref{Nimbersec}.

\item In Section~\ref{sec conclusion},  we conclude the article with discussions of several open questions. 
\end{itemize}

\begin{note}
     We will follow standard graph notation according to West~\cite{west2001introduction} throughout this article unless otherwise stated.
\end{note}

\begin{note}
     A preliminary version of this work accepted for presentation at ICGT 2022.
\end{note}

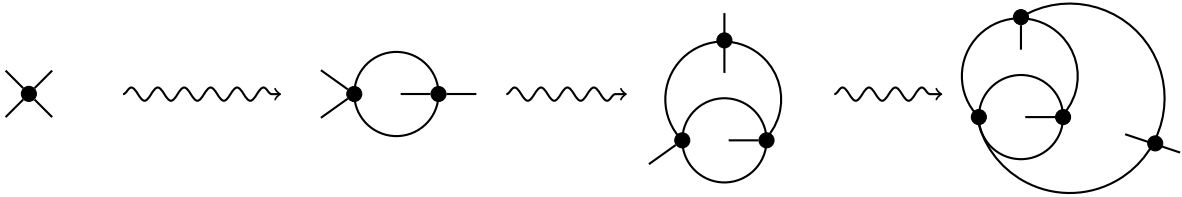
\begin{figure}
\begin{tikzpicture}[node distance=4.5cm]

\node (step1){\begin{tikzpicture}[scale=0.7]
\node[circle, draw,inner sep=0.1mm,minimum size=2mm, fill] (c7) at (0,0) {};
\node[white, circle, draw,inner sep=0.1mm,minimum size=1mm, fill] (c8) at (-0.5,-0.5) {};
\node[white,circle, draw,inner sep=0.1mm,minimum size=1mm, fill] (c9) at (-0.5,0.5) {};
\node[white,circle, draw,inner sep=0.1mm,minimum size=1mm, fill] (c10) at (0.5,0.5) {};
\node[circle, white,draw,inner sep=0.1mm,minimum size=1mm, fill] (c11) at (0.5,-0.5) {};
\node[circle, white,draw,inner sep=0.1mm,minimum size=1mm, fill] (d) at (1.5,-0.5) {};
\draw[thick] (c8) -- (c7);
\draw[thick] (c9) -- (c7);
\draw[thick] (c10) -- (c7);
\draw[thick] (c11) -- (c7);
\end{tikzpicture}};

\node (step2) [right of=step1] {\begin{tikzpicture}[scale=0.7]
\draw[thick] (c7) arc[start angle=0, end angle=360, radius=0.8];
\node[circle, draw,inner sep=0.1mm,minimum size=2mm, fill] (c7) at (0,0) {};
\node[white, circle, draw,inner sep=0.1mm,minimum size=1mm, fill] (c8) at (-0.8,0) {};
\node[white,circle, draw,inner sep=0.1mm,minimum size=1mm, fill] (c9) at (-2.3,0.5) {};
\node[white,circle, draw,inner sep=0.1mm,minimum size=1mm, fill] (c10) at (-2.3,-0.5) {};
\node[circle, draw,inner sep=0.1mm,minimum size=2mm, fill] (c12) at (-1.6,0) {};
\node[white,circle, draw,inner sep=0.1mm,minimum size=1mm, fill] (c13) at (0.8,0) {};
\node[circle, white,draw,inner sep=0.1mm,minimum size=1mm, fill] (d2) at (-2.7,-0.5) {};
\node[circle, white,draw,inner sep=0.1mm,minimum size=1mm, fill] (d1) at (1,-0.5) {};
\draw[thick] (c10) -- (c12);
\draw[thick] (c9) -- (c12);
\draw[thick] (c8) -- (c7);
\draw[thick] (c13) -- (c7);

\end{tikzpicture}};

\node (step3) [right of=step2] {\begin{tikzpicture}[scale=0.7]
\node[circle, draw,inner sep=0.1mm,minimum size=2mm, fill] (c7) at (0,0) {};
\draw[thick] (c7) arc[start angle=0, end angle=360, radius=0.8];
\node[circle, draw,inner sep=0.1mm,minimum size=2mm, fill] (c12) at (-1.6,0) {};
\node[white, circle, draw,inner sep=0.1mm,minimum size=1mm, fill] (c8) at (-0.8,0) {};
\node[white,circle, draw,inner sep=0.1mm,minimum size=1mm, fill] (c9) at (-0.8,2.5) {};
\node[white,circle, draw,inner sep=0.1mm,minimum size=1mm, fill] (c10) at (-2.3,-0.5) {};
\node[white,circle, draw,inner sep=0.1mm,minimum size=1mm, fill] (c14) at (-0.8,1.2) {};
\draw[thick] (c12) arc[start angle=225, end angle=-50, radius=1.1];
\node[circle, draw,inner sep=0.1mm,minimum size=2mm, fill] (c6) at (-0.8,1.9) {};
\node[circle, white,draw,inner sep=0.1mm,minimum size=1mm, fill] (d2) at (-2.2,-0.5) {};
\node[circle, white,draw,inner sep=0.1mm,minimum size=1mm, fill] (d1) at (1,-0.5) {};

\draw[thick] (c10) -- (c12);
\draw[thick] (c9) -- (c14);
\draw[thick] (c8) -- (c7);

\end{tikzpicture}};

\node (step4) [right of=step3] {\begin{tikzpicture}[scale=0.7]
\draw[thick] (c7) arc[start angle=0, end angle=360, radius=0.8];
\node[circle, draw,inner sep=0.1mm,minimum size=2mm, fill] (c7) at (0,0) {};
\node[circle, draw,inner sep=0.1mm,minimum size=2mm, fill] (c12) at (-1.6,0) {};
\node[white, circle, draw,inner sep=0.1mm,minimum size=1mm, fill] (c8) at (-0.8,0) {};
\node[white,circle, draw,inner sep=0.1mm,minimum size=1mm, fill] (c14) at (-0.8,1.2) {};
\node[circle, draw,inner sep=0.1mm,minimum size=2mm, fill] (c3) at (1.75,-0.5) {};
\node[circle, draw,inner sep=0.1mm,minimum size=2mm, fill] (c6) at (-0.8,1.9) {};
\draw[thick] (c6) arc[start angle=121, end angle=-169, radius=1.8];
\draw[thick] (c12) arc[start angle=225, end angle=-50, radius=1.1];
\node[white, circle, draw,inner sep=0.1mm,minimum size=1mm, fill] (c4) at (2.3,-0.7) {};
\node[white, circle, draw,inner sep=0.1mm,minimum size=1mm, fill] (c5) at (1.1,-0.3) {};
\node[circle, white,draw,inner sep=0.1mm,minimum size=1mm, fill] (d2) at (-2,-0.5) {};
\draw[thick] (c4) -- (c5);
\draw[thick] (c6) -- (c14);
\draw[thick] (c8) -- (c7);

\end{tikzpicture}};
\draw [thick, ->,decorate,decoration=snake] (step1) -- (step2);
\draw [thick, ->,decorate,decoration=snake] (step2) -- (7.6, 0);
\draw [thick, ->,decorate,decoration=snake] (step3) -- (step4);

\end{tikzpicture}
\caption{An example of a {\sc Brussels Sprout} game.} \label{Brusselgame}
\end{figure}

\section{Forests and graphs on surfaces}\label{sec surface}

To begin the study, let us first consider the family of forests.

\begin{theorem}\label{th forest}
Let $\mathcal{F}_t$ be the family of forests. Then $BS_n(\mathcal{F}_t :t_1,t_2,..., t_n$) ends after exactly $n-1$ moves. 
\end{theorem}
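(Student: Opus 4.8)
The plan is to follow the evolution of the graph through the play while tracking two quantities: the number of connected components of the current graph, and the distribution of the open tips among those components. Neither quantity will depend on how the players choose their moves, so the conclusion will hold for every play of the game.

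First I would record the bookkeeping of a single move. A move selects two open tips, sitting on vertices $u$ and $v$ (a priori possibly with $u=v$), adds one new vertex $w$ --- the intersection of the curve and its crossbar --- together with the two edges $uw$ and $wv$, removes the two chosen tips and creates two fresh tips at $w$. Thus every move adds exactly one vertex and two edges, and the total number of open tips is invariant, always equal to $T:=\sum_{i=1}^{n}t_i$. Next I would describe the legal moves: since the current graph must remain a forest, joining two tips that lie in the same component --- in particular the case $u=v$, which would create a doubled edge --- closes a cycle and is illegal, whereas joining two tips lying in different components links two trees through $w$ and leaves a forest (forests are planar and closed under taking subgraphs, so the hereditary requirement is automatically met). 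Hence the legal moves are exactly those joining open tips from two distinct components, and each such move decreases the number of components by exactly one. In particular the game lasts at most $n-1$ moves.

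For the matching lower bound I would prove, by induction on the number of moves already played, the invariant that for every component $C$ of the current graph the number of open tips contained in $C$ equals $\sum\{\,t_i : s_i\in C\,\}$, summing over the initial spots $s_i$ absorbed into $C$. The base case is immediate, and the inductive step is the identity $(a-1)+(b-1)+2=a+b$ governing the merger of a component with $a$ tips and one with $b$ tips. Granting that each $t_i\ge 1$ --- a spot with no open tip would stay isolated for the whole game, so the statement is to be read with this convention --- the invariant guarantees that every component carries at least one open tip. Therefore, as long as the current graph has two or more components, it has two open tips lying in different components and a legal move is available; the game cannot end before the graph is connected. Since the graph starts with $n$ components and each move destroys exactly one, the game ends after exactly $n-1$ moves.

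I expect the only point requiring care to be the per-component tip invariant and its use to certify that a move always exists while the graph is disconnected; the upper bound and the observation that a move merges two components are routine. The hypothesis $t_i\ge 1$ is worth flagging explicitly, since an initial spot with no open tip makes the game end one move early and the statement fails as written.
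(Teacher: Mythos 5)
Your proof is correct, and it takes a genuinely different (and in one respect more complete) route than the paper's. The paper argues that the terminal graph must be a tree and then simply counts: after $x$ moves the graph has $n+x$ vertices and $2x$ edges, so the tree relation $|E(G)|=|V(G)|-1$ forces $x=n-1$. You instead count connected components: a legal move is exactly an inter-component join (same-component joins, including the case $u=v$, would create a cycle), so each move decreases the number of components by exactly one, giving the upper bound $n-1$; and your per-component tip invariant (the tips in a component equal the sum of the $t_i$ of the initial spots it has absorbed) guarantees that every component retains at least one open tip, so a legal move exists whenever the graph is disconnected, giving the matching lower bound. That second half is precisely the point the paper passes over with the unjustified sentence ``it is possible to make a move until we create a tree,'' so your argument buys rigor there, while the paper's tree-count is shorter once that claim is granted. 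Your flagging of the hypothesis $t_i\ge 1$ is also apt: the paper implicitly assumes it (a spot with no open tips stays isolated, the terminal graph is then not a tree, and the game ends early). One small point common to both proofs: since the play is drawn in the plane, one should in principle also check that two tips in distinct components can be joined by a non-crossing curve; this is immediate because a plane forest has a single face, and the paper is equally silent about it.
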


\begin{proof}
Since $\mathcal{F}_t$ is the family of forests, the resultant graph of terminated $BS_n(\mathcal{F}:t_1,t_2,..., t_n$) must be a tree. That is, it is possible to make a move until we create a tree. On the other hand, once we create a tree through our game, it is not possible to make any other move, as it will create a cycle.

Suppose that the game is terminated after $x$ moves and that $G$ is the resultant graph. Therefore, $|V(G)| = (n+x)$ as we started with $n$ vertices and in each move we have added exactly one vertex. Also, $|E(G)| = 2x$ as we started with no edges and in each move we have added exactly two edges. We know that $G$ is a tree, and therefore, 
$|E(G)| = |V(G)| -1$. Hence we have $2x = (n+x)-1$, which implies, $x = n-1$. 
\end{proof}

Next, we will move our attention to the family $\mathcal{O}_k$ of graphs that can be drawn on orientable surfaces of genus $k$ without crossing.

\begin{theorem}\label{th orientable surface}
Let $\mathcal{O}_k$ be the family of graphs that can be drawn on orientable surfaces of genus $k$ without crossing.
Then the only possible numbers of moves until the game $BS_n(\mathcal{O}_k:t_1,t_2,...,t_n)$ terminates are 
$$(n-2) + 2j + \sum_{i=1}^n t_i,$$
where $j = 0, 1, \cdots, k$. 
\end{theorem}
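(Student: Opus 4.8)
The plan is to imitate the counting used for forests while carrying out the surface topology carefully. First I would record two bookkeeping invariants that persist after every move: the total number of open tips never changes, since a move destroys the two tips it joins and the crossbar it places creates exactly two fresh ones, so this number stays equal to $T:=\sum_{i=1}^n t_i$; and after $x$ moves the current graph $G_x$ satisfies $|V(G_x)|=n+x$ and $|E(G_x)|=2x$, each move contributing the single crossbar vertex together with the two halves of the subdivided curve. I would also note the finer invariant that the number of open tips inside any connected component equals the sum of the $t_i$ of the original spots that were merged into it, so (assuming all $t_i\ge 1$) every component always carries at least one open tip.

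Next I would pin down the terminal position. Throughout the game the picture is a crossing-free diagram on the fixed orientable surface $\Sigma_k$ of genus $k$; by induction on the number of moves, every face of the current diagram contains at least one open tip, because a move is drawn inside a face that already has at least two tips and the two new crossbar tips lie one in each of the at most two pieces into which that face is cut, while untouched faces keep their tips. When the game ends no face can contain two tips, so at termination every face contains exactly one tip and the final diagram has exactly $|F|=T$ faces. I would also argue that the final graph is connected: if two components bordered a common face one could re-route a connecting curve through that face joining a tip of one to a tip of the other, contradicting termination; this uses the component-tip invariant and is one of the delicate points.

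Now I apply Euler's formula for a possibly non-cellular embedding, $\chi(\Sigma_k)=|V|-|E|+\sum_f\chi(f)$ with $\chi(f)=2-2g_f-b_f$ for a face of genus $g_f$ bounded by $b_f$ circles, and substitute $|V|=n+x$, $|E|=2x$, $|F|=T$, $\chi(\Sigma_k)=2-2k$ to obtain $x=(n-2)+2k+T-D$, where $D:=\sum_f(2g_f+b_f-1)\ge 0$ measures how far the faces are from being disks. Thus the theorem is exactly the statement that $D$ is an even integer with $0\le D\le 2k$, and then $j:=k-D/2$ reproduces the claimed list. The inequality $D\ge0$ is immediate, and $D\le 2k$ follows from connectivity of the final graph, since a connected graph embedded in $\Sigma_k$ has at most $|E|-|V|+2$ faces, which rearranges to $D\le 2k$. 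The genuine content is the parity $D\equiv 0$: here I would track $D$ move by move, observing that $D$ is unchanged on a move that splits a face and drops by exactly one on a move that does not (a ``merge'' of two components, or a curve that winds inside a single component without separating its face), that exactly $n-1$ of the non-splitting moves are merges (we start with $n$ components and finish with one), and that the remaining non-splitting moves occur in total an even number of times. Proving this last fact — morally, that the $k$ handles of $\Sigma_k$ can only be consumed by the growing diagram in even-sized chunks, since a partially wound handle always forces a compensating non-separating move before the game can stop — is the main obstacle, and is precisely where the orientable-surface hypothesis is used; it is the step I expect to demand the most care.

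Finally, for the converse I would exhibit, for each $j\in\{0,1,\dots,k\}$, a play terminating after exactly $(n-2)+2j+T$ moves: play entirely ``planarly'', joining tips of distinct components by short contractible curves until the graph is connected and afterwards splitting faces with contractible curves, which gives $D=2k$ and hence $j=0$; and for $j\ge 1$ additionally route two successive connecting curves around each of $j$ chosen handles so that those handles become cellularly filled, lowering $D$ by two for each such handle and yielding the value $j$. This realizes all $k+1$ listed numbers of moves and completes the argument.
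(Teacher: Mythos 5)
Your reduction is set up correctly and in fact more carefully than the paper's own proof (the paper simply equates the number of faces of the terminal drawing with $\sum t_i$, declares $j$ to be the minimum orientable genus of the final graph, and applies Euler's formula $|V|-|E|+|F|=2-2j$ there, silently identifying the face count of the drawing on $\Sigma_k$ with that of a minimum-genus embedding and never discussing connectivity). But your proposal has a genuine gap exactly at the step you flag: you never prove that the deficiency $D=\sum_f(2g_f+b_f-1)$ is even at termination, equivalently that the non-splitting, non-merging moves occur an even number of times. The heuristic ``handles are consumed in even chunks'' does not rule out the real danger, which is a terminal face of genus $0$ with $b_f=2$ (an annulus with a single tip): such a face contributes deficiency $1$, and nothing in your move-by-move accounting excludes it, since annular faces with exactly one tip can and do occur in mid-game positions. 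Your connectivity step has a related soft spot: knowing that each component carries at least one tip \emph{somewhere} does not let you ``re-route a connecting curve through'' a shared face, because a curve must stay inside a single face, so you need tips of \emph{both} components inside that common face, which your component-tip invariant does not give.

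Both holes are closed by strengthening your face invariant from faces to boundary circles: show by induction on moves that \emph{every boundary circle of every face carries at least one open tip pointing into that face} (true initially since every $t_i\ge 1$; a splitting move puts one crossbar tip on the new circle of each piece while untouched circles keep their tips; a non-splitting move either merges two circles into one carrying both crossbar tips, or splits one circle into two, each receiving one crossbar tip). At termination each face has at most one tip, so $b_f=1$ for every face; this immediately yields connectivity (each face is bounded by a single component, and grouping face closures by component would otherwise disconnect $\Sigma_k$), gives $D=2\sum_f g_f$, hence even, and combined with your bound $|F|\le |E|-|V|+2$ gives $0\le D\le 2k$, completing your argument. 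With that lemma supplied, your route is correct and, unlike the paper's, actually justifies why the drawing's face count can be fed into an Euler formula with an integer genus $j\le k$; your final paragraph on realizing every $j$ proves more than the paper does (the paper only shows the move count lies in the stated list), and is fine in outline though it would also need the routing around handles spelled out.
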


\begin{proof}
Suppose the game ends after $x$ moves and let the resultant graph after the end of the game be $G$. 
Thus, $|V(G)|= n+x$ and $|E(G)| = 2x$ as we start with $n$ vertices, $0$ edges,  and include exactly one vertex and two edges in each move.

Furthermore, we observe that the game cannot end if a particular face contains two or more open tips, while in the last move involved in creating a particular face of $G$ will ensure at least one open tip inside the face. Thus, the number of open tips is equal to the number of faces of $G$. 
Moreover, in each move we close exactly two open tips, and create exactly two open tips. Thus, the number of open tips 
remain invariant throughout the game. Therefore, the number of faces in $G$ is equal to  $|F(G)| = \sum_{i=1}^n t_i$.

Notice that, even though $G$ is embedded on $\mathcal{O}_k$, it may be possible to embed  it on an orientable surface of genus less than $k$. 
Let $j$ be the least number for which $G$ can be embedded on $\mathcal{O}_j$. Thus, $G$ will satisfy the Euler's formula 
$$|V(G)| -|E(G)|+ |F(G)|=2-2j$$
for orientable surfaces.

Thus, by replacing the values of $|V(G)|$, $|E(G)|$ and $|F(G)|$ we get
\begin{equation*}
x+n-2x+\sum_{i=1}^n t_i= 2-2j
\implies
    x=(n-2) + 2j + \sum_{i=1}^n t_i
\end{equation*}
which completes the proof. 
\end{proof}

Recall that the orientable surface with genus $0$ is nothing but the sphere, and thus, the family $\mathcal{O}_0$ of graphs are nothing but planar graphs. The above theorem characterizes all possible number of moves for the game 
$BS_n(\mathcal{O}_k: t_1, t_2, \cdots, t_n)$ to end. 
Clearly, when $k=0$, that is, for the game 
$BS_n(\mathcal{O}_0: t_1, t_2, \cdots, t_n)$ the game will end after exactly 
$(n-2) + \sum_{i=1}^n t_i$
moves. Therefore, for planar graphs, the game will end after a constant number of moves, and the first player will win if and only if that constant is odd
irrespective of how the game gets played. 

\begin{corollary}\label{corollary}
The game $BS_n(\mathcal{O}_0: t_1, t_2, \cdots, t_n)$
will end exactly after $(n-2) + \sum_{i=1}^n t_i$ moves and the first player will win if and only if 
$n + \sum_{i=1}^n t_i$ is odd. 
\end{corollary}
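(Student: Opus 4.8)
The plan is to obtain this statement as a direct specialization of Theorem~\ref{th orientable surface} to the case $k = 0$, combined with an elementary parity argument for impartial games whose length is fixed in advance.

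First I would recall that $\mathcal{O}_0$ is precisely the family of planar graphs (the sphere being the orientable surface of genus $0$), which is hereditary, so $BS_n(\mathcal{O}_0 : t_1, \dots, t_n)$ is a legitimate instance of generalized {\sc Brussels sprout}. Applying Theorem~\ref{th orientable surface} with $k = 0$, every terminated play uses exactly $(n-2) + 2j + \sum_{i=1}^n t_i$ moves for some $j \in \{0, 1, \dots, k\} = \{0\}$; hence $j = 0$ is forced and every play terminates after exactly $(n-2) + \sum_{i=1}^n t_i$ moves, independently of how either player chooses to play.

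Next I would invoke the standard observation for impartial games of fixed length: if every play lasts exactly $x$ moves, then Player~$1$ makes the moves numbered $1, 3, 5, \dots$ and Player~$2$ the moves numbered $2, 4, 6, \dots$; after move $x$ the player to move next is unable to move and thus loses. Therefore Player~$1$ wins if and only if $x$ is odd. Finally I would note that $x = (n-2) + \sum_{i=1}^n t_i$ is odd if and only if $n + \sum_{i=1}^n t_i$ is odd, since subtracting $2$ does not change parity, which yields the claimed criterion. The argument has no real obstacle, as the substantive content is already contained in Theorem~\ref{th orientable surface}; the only point deserving (minor) care is to confirm that the convention ``the first player unable to move loses'' makes a game of odd length a win for the first player, as used above.
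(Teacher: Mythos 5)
Your proposal is correct and follows essentially the same route as the paper: the paper also obtains the corollary by restricting Theorem~\ref{th orientable surface} to $k=0$ (so $j=0$ is forced and the game length is the constant $(n-2)+\sum_{i=1}^n t_i$) and then applies the standard parity observation that the first player wins exactly when this fixed number of moves is odd, which has the same parity as $n+\sum_{i=1}^n t_i$. No gaps; your extra care about the ``last player to move wins'' convention is fine but not needed beyond what the paper already uses.
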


\begin{proof}
Follows directly from Theorem~\ref{th orientable surface} by restricting it for $k=0$.
\end{proof}

On the other hand, if we consider the game 
$BS_n(\mathcal{O}_k: t_1, t_2, \cdots, t_n)$ for all $k \geq 1$, even though the number of moves after which the game may end is not a constant, note that 
it only differs by an even number. Thus we have the following.

\begin{corollary}\label{O_k}
In the game $BS_n(\mathcal{O}_k: t_1, t_2, \cdots, t_n)$ for $k \geq 1$,
the first player will win if and only if 
$n + \sum_{i=1}^n t_i$ is odd. 
\end{corollary}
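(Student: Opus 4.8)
The plan is to deduce the statement directly from Theorem~\ref{th orientable surface}, the crucial point being that when $k \geq 1$ the number of moves is no longer constant, but all of its possible values share the same parity. First I would recall from Theorem~\ref{th orientable surface} that every completed play of $BS_n(\mathcal{O}_k: t_1, t_2, \cdots, t_n)$ lasts exactly $(n-2) + 2j + \sum_{i=1}^n t_i$ moves for some $j \in \{0, 1, \dots, k\}$, where $j$ is the genus of the minimal orientable surface carrying the terminal graph. Since $2j$ is even, each such value is congruent modulo $2$ to $(n-2) + \sum_{i=1}^n t_i$, hence to $n + \sum_{i=1}^n t_i$ modulo $2$. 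Thus the parity of the length of a play is the same for every line of play, and equals the parity of $n + \sum_{i=1}^n t_i$.

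Next I would invoke the normal-play convention of the game: the first player unable to move loses, so whoever makes the last move wins. Consequently Player~$1$ wins precisely when the total number of moves in the completed play is odd. Because every terminating play has length of one fixed parity, no choice of moves by either player can change this parity; the outcome is therefore forced. Combining this with the previous paragraph, Player~$1$ wins if and only if $n + \sum_{i=1}^n t_i$ is odd, irrespective of how either player plays.

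The only point requiring a word of care is that the game does terminate along every line of play, so that "the last move" is well defined — but this is exactly what Theorem~\ref{th orientable surface} guarantees, since it asserts termination and enumerates the admissible lengths. I would also remark, for continuity with Corollary~\ref{corollary}, that the case $k = 0$ is the degenerate instance in which the single admissible length $(n-2) + \sum_{i=1}^n t_i$ already has this parity, so both regimes are governed by the parity of $n + \sum_{i=1}^n t_i$. I do not anticipate any real obstacle here: the corollary is essentially a one-line consequence of the parity structure exposed by Theorem~\ref{th orientable surface}, whose proof via Euler's formula already contains all the substance.
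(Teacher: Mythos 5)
Your proposal is correct and follows the same route as the paper: both deduce the result from Theorem~\ref{th orientable surface} by observing that the term $2j$ does not affect parity, so every play has length of parity equal to that of $n + \sum_{i=1}^n t_i$, and under normal play Player~1 wins exactly when this is odd. Your write-up just spells out the last-player-wins step that the paper leaves implicit.
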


\begin{proof}
Follows directly from Theorem~\ref{th orientable surface} by observing that $(n-2) + 2j + \sum_{i=1}^n t_i$ is odd if and only if $n + \sum_{i=1}^n t_i$ is odd for all $j = 0, 1, \cdots, k$. 
\end{proof}

On a similar vein, we also study the family $\mathcal{N}_k$ of 
graphs that can be drawn on non-orientable surfaces of genus 
$k$ without crossing. 

\begin{theorem}\label{th non-orientable surface}
Let $\mathcal{N}_k$ be the family of graphs that can be drawn on non-orientable surfaces of genus $k$ without crossing. 
Then the only possible numbers of moves until the game $BS_n(\mathcal{N}_k : t_1,t_2,...,t_n)$ terminates are 
$$(n-2) + j + \sum_{i=1}^n t_i,$$
where $j = 0, 1, \cdots, k$. 
\end{theorem}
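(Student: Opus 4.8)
The plan is to mimic the proof of Theorem~\ref{th orientable surface}, replacing Euler's formula for orientable surfaces by the non-orientable version. First I would set up the same bookkeeping: suppose the game terminates after $x$ moves with resultant graph $G$. As before, since we start with $n$ vertices and $0$ edges and each move adds exactly one vertex and two edges, we have $|V(G)| = n + x$ and $|E(G)| = 2x$. The argument that the number of faces equals the number of open tips carries over verbatim: a game cannot end while some face contains two or more open tips, and the move that last bordered a given face leaves at least one open tip in it, so $|F(G)| = \sum_{i=1}^n t_i$ since the number of open tips is invariant (each move closes two and creates two).

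Next I would invoke the minimality trick exactly as in the orientable case: although $G$ is embedded on $\mathcal{N}_k$, it may embed on a non-orientable surface of smaller genus, so let $j$ be the least integer such that $G$ embeds on $\mathcal{N}_j$. Here one must be slightly careful, since a graph embeddable on a non-orientable surface of some genus may also be planar; but the statement allows $j=0$, which we can interpret as the sphere (genus $0$ being the degenerate case), so this is consistent with the claimed range $j = 0, 1, \dots, k$. Then Euler's formula for the non-orientable surface $\mathcal{N}_j$ reads
\[
|V(G)| - |E(G)| + |F(G)| = 2 - j.
\]
Substituting the three computed quantities gives $x + n - 2x + \sum_{i=1}^n t_i = 2 - j$, i.e.\ $x = (n-2) + j + \sum_{i=1}^n t_i$, which is the desired formula.

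The only genuinely delicate point — the one I would flag as the main obstacle — is the realizability claim implicit in the theorem: that \emph{every} value $j \in \{0, 1, \dots, k\}$ actually occurs as the genus of some terminal graph, so that the listed numbers are exactly the possible ones rather than merely an upper set of candidates. For the orientable case this was glossed over, and the same reasoning applies here: one should exhibit, for each $j$, a sequence of legal moves producing a terminal graph of non-orientable genus exactly $j$ (for instance by first playing moves that build up the needed genus and then playing out the remainder so that no face has two open tips). I would handle this by the same constructive argument used (or intended) for Theorem~\ref{th orientable surface}, noting that the additivity of moves lets one reach each intermediate genus; everything else is the routine substitution above.
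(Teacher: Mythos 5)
Your proof is essentially the paper's own argument: the same vertex/edge/face bookkeeping ($|V(G)|=n+x$, $|E(G)|=2x$, $|F(G)|=\sum_{i=1}^n t_i$), the same device of taking the least genus $j$ on which the terminal graph embeds, and the same substitution into the non-orientable Euler formula $|V(G)|-|E(G)|+|F(G)|=2-j$. The realizability point you flag (that every $j\in\{0,1,\dots,k\}$ actually occurs) is likewise left unaddressed in the paper, whose proof, like yours, only shows that the number of moves must have the stated form.
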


\begin{proof}
Suppose the game ends after $x$ moves and let the resultant graph after the end of the game be $G$. Thus, we can observe that $G$ has
$|V(G)|= n+x$, $|E(G)| = 2x$ and $|F(G)| = \sum_{i=1}^n t_i$
using the same arguments as in the proof of 
Theorem~\ref{th orientable surface}. 

Notice that, even though $G$ is embedded on $\mathcal{N}_k$, it may be possible to embed it on a non-orientable surface of genus less than $k$. 
Let $j$ be the least number for which $G$ can be embedded on $\mathcal{N}_j$.
Therefore, $G$ satisfies the Euler's formula is $$|V(G)| -|E(G)|+ |F(G)|=2-j.$$

Thus, by replacing the values of $|V(G)|$, $|E(G)|$ and $|F(G)|$ we get 
\begin{equation*}
x+n-2x+\sum_{i=1}^n t_i= 2-j \implies
    x=(n-2) + j + \sum_{i=1}^n t_i
\end{equation*}
which completes the proof.
\end{proof}

Recall that the non-orientable surface with genus $0$ is nothing but the projective plane, and thus, the family $\mathcal{N}_0$ of graphs are nothing but the projective planar graphs. Therefore the following corollary follows directly. 

\begin{corollary}
The game $BS_n(\mathcal{N}_0: t_1, t_2, \cdots, t_n)$
will end exactly after $(n-2) + \sum_{i=1}^n t_i$ moves and the first player will win if and only if 
$n + \sum_{i=1}^n t_i$ is odd. 
\end{corollary}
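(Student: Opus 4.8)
The plan is to obtain this corollary as the $k=0$ specialization of Theorem~\ref{th non-orientable surface}, in exact parallel with the way Corollary~\ref{corollary} was deduced from Theorem~\ref{th orientable surface}. So the first step is simply to invoke Theorem~\ref{th non-orientable surface} with $k=0$: there the admissible move counts are $(n-2)+j+\sum_{i=1}^n t_i$ for $j\in\{0,1,\dots,k\}$, and when $k=0$ the only admissible value is $j=0$. Hence the game $BS_n(\mathcal{N}_0:t_1,\dots,t_n)$ must terminate after exactly $(n-2)+\sum_{i=1}^n t_i$ moves, no matter how the two players play. This already establishes the first assertion (and, as a byproduct, that the game always terminates).

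The second step is to convert this fixed move count into a statement about the winner. Since the two players move alternately and the player who cannot move loses, the first player makes precisely the odd-numbered moves; therefore the first player makes the last move — and so wins — if and only if the total number of moves is odd. Combining with the first step, the first player wins if and only if $(n-2)+\sum_{i=1}^n t_i$ is odd.

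The last step is the trivial parity observation that $(n-2)+\sum_{i=1}^n t_i$ and $n+\sum_{i=1}^n t_i$ differ by $2$ and hence have the same parity, which rewrites the criterion in the stated form. I do not expect any genuine obstacle here: the content of the statement is entirely carried by Theorem~\ref{th non-orientable surface}, and the only things to check are the elementary parity bookkeeping and the remark that a fixed (play-independent) move count forces the outcome regardless of strategy.
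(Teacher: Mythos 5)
Your argument is correct and is exactly the paper's approach: the paper also derives this corollary by specializing Theorem~\ref{th non-orientable surface} to $k=0$, with the same parity observation (the student just spells out the bookkeeping that the paper leaves implicit).
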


\begin{proof}
Follows directly from Theorem~\ref{th non-orientable surface} by restricting it for $k=0$.
\end{proof}

However, in this case, for higher genus, unlike in the case of orientable surfaces, the parity of the number of moves after which the game may end is not the same. Therefore, depending on how the game is played, it may be won by Player 1 or Player 2. Thus, it makes sense to analyse winning strategies. We pose this as an open question.

\begin{question}
Which player has a winning strategy for the game 
$BS_n(\mathcal{N}_k: t_1, t_2, \cdots, t_n)$ when $k \geq 1$? 
\end{question}

\section{Sparse planar graphs}\label{sec sparse}
In this section, let us focus on the family
$\mathcal{P}_g$ of planar graphs with girth at least $g$. 
The first result shows that if we fix a particular value of $n$, then the number of moves after which the game  $BS_n(\mathcal{P}_g:t_1,t_2,...,t_n)$ ends is a constant for large enough values of $g$. 

\begin{theorem}{\label{girth and tree}}
Let $\mathcal{P}_g$ be the family of planar graphs having girth at least $g$. Then the game
$BS_n(\mathcal{P}_g: t_1, t_2, \cdots, t_n$) game ends exactly after $n-1$ moves for all $g \geq 2n+1$.
\end{theorem}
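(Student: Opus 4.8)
The plan is to sandwich the number of moves between $n-1$ and $n-1$: an upper bound forced by the girth restriction making the graph too sparse to contain a cycle while it is still small, and a matching lower bound from the fact that a legal move survives for as long as the current graph is disconnected. Throughout I use the two bookkeeping facts already isolated in the proof of Theorem~\ref{th orientable surface}: after $f$ moves the current graph $G$ satisfies $|V(G)|=n+f$ and $|E(G)|=2f$, and the number of open tips stays equal to $\sum_{i=1}^{n}t_i$ at all times. I also assume, as the paper implicitly does (it is already needed in Theorem~\ref{th forest}), that $t_i\ge 1$ for every $i$, so that each spot can actually be joined to something.

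\emph{Upper bound.} I claim the game can never complete $n$ moves. If it did, the resulting graph would have $2n$ vertices and $2n$ edges; a graph with at least as many edges as vertices contains a cycle, and every cycle has length at most $|V(G)|=2n<2n+1\le g$, so $G$ would have girth at most $2n$, contradicting $G\in\mathcal{P}_g$. Since each move raises the move count by exactly one, the game therefore terminates after at most $n-1$ moves.

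\emph{Lower bound.} I show that every position reached after $f<n-1$ moves still admits a legal move. First, $|V(G)|=n+f\le 2n-2<g$, so any cycle of $G$ would be shorter than $g$; hence $G$ is a forest, and it has $|V(G)|-|E(G)|=n-f\ge 2$ connected components. Next, every move so far must have joined two tips lying in \emph{different} components: joining two tips of the same (tree) component would create a cycle, and a cycle, once present, persists since moves only add vertices and edges, contradicting that $G$ is a forest. Hence the components of $G$ partition $\{s_1,\dots,s_n\}$ (every component contains at least one original spot, as every edge is incident to an internal vertex and those trace back to merges of spots), and an easy induction on moves shows that the component built from a set $S$ of original spots carries exactly $\sum_{s_i\in S}t_i$ open tips (a merge spends one tip from each of the two components and creates two fresh ones, a net change of zero). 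As every $t_i\ge 1$, each of the $\ge 2$ components carries at least one open tip, so we may choose open tips $a,b$ in two distinct components. Joining them is legal: a forest has a single face, so $a$ and $b$ lie on the boundary of a common face and can be connected by a curve inside it; and since $a,b$ lie in different components no cycle is created, so the new graph is still a forest and hence in $\mathcal{P}_g$. Thus the game is not over, and together with the upper bound this forces the game to end after exactly $n-1$ moves.

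The only routine ingredients are the two counting identities (already available from Theorem~\ref{th orientable surface}) and the standard planarity remark that two tips lying in a common face can always be joined. The step demanding the most care is the component bookkeeping in the lower bound: one must first observe that no cycle-creating move can have occurred yet, which is what keeps the invariant ``number of open tips in a component $=$ sum of the $t_i$ over its spots'' valid, and this is exactly where $t_i\ge 1$ is used. It is also worth recording why the threshold is precisely $g\ge 2n+1$: this is the least girth that rules out a graph on $2n$ vertices with $2n$ edges (for example $C_{2n}$ with the remaining vertices hung off as trees), and such a graph is exactly what would allow the game to reach its $n$-th move; for $g=2n$ the conclusion can already fail.
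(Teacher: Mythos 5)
Your proof is correct and follows essentially the same route as the paper: the identical counting/girth contradiction for the upper bound, and a lower bound coming from the fact that the position remains a forest and a disconnected forest always admits a move, which the paper obtains by citing Theorem~\ref{th forest}. Your version merely spells out the details (the component/open-tip invariant, the role of $t_i\ge 1$, and the single-face planarity remark) that the paper leaves implicit in that citation.
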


\begin{proof}
Let $G_x$ be the resultant graph after $x$ number of moves.
As we start with $n$ vertices and add one vertex in each move, 
we have $|V(G_x)|=(n+x)$. Also, we start with zero edges and add two edges in each move. Thus we have $|E(G_x)| = 2x$. 

Thus after $n$ moves, we have $2n$ vertices and $2n$ edges in $G_n$. This graph must have a cycle, and as the graph has only $2n$ vertices, the cycle cannot have length greater than or equal to $2n+1$. 
This is a contradiction. 

Hence, the number of moves cannot be more than $n-1$. On the other hand, $n-1$ moves are also the minimum number of moves due to Theorem~\ref{th forest}. 
\end{proof}

Next, we focus particularly on the family of triangle-free planar graphs, that is, $\mathcal{P}_4$. We find upper and lower bounds of the number of moves after which the game 
$BS_n(\mathcal{P}_4:t_1,t_2,...,t_n)$ ends.

\begin{theorem}{\label{theorem1.8}}
The number of moves after which the game $BS_n(\mathcal{P}_4:t_1,t_2,...,t_n)$ ends is between $(4+n)$  and $(n-2) + \sum_{i=1}^n t_i$, where $n \geq 2$ and $t_i \geq 3$. 
\end{theorem}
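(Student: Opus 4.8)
The upper bound is essentially free. Any terminated game on $\mathcal{P}_4$ produces a planar graph $G$, and a planar graph is in particular embeddable on the sphere, i.e.\ $\mathcal{P}_4 \subseteq \mathcal{O}_0$. The same bookkeeping as in the proof of Theorem~\ref{th orientable surface} applies verbatim: at termination $|V(G)| = n+x$, $|E(G)| = 2x$, and the number of faces equals the number of open tips, which is invariant and equal to $\sum_{i=1}^n t_i$. Euler's formula for the sphere gives $(n+x) - 2x + \sum t_i = 2$, hence $x = (n-2) + \sum_{i=1}^n t_i$ \emph{if} the terminal graph is connected and the face count is exactly $\sum t_i$. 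For a general terminal $\mathcal{P}_4$ game the graph need not be connected, and a disconnected graph can terminate \emph{earlier} (open tips in different components can never be joined if that would, say, violate girth, or the pieces simply stall), so $x \le (n-2) + \sum t_i$. This gives the claimed upper bound; I would phrase it as: the game cannot run longer than the planar (genus $0$) game of Corollary~\ref{corollary}, since every intermediate graph is planar, and the planar game has a fixed length.

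The lower bound $x \ge n+4$ is the substantive half. The idea is that the girth-$4$ constraint is weak enough that a game cannot get ``stuck'' too early: as long as there are at least two open tips lying in a common face, and joining them does not create a triangle, a move is available. First I would argue that the game cannot terminate while the current graph is a forest with $n \ge 2$ components' worth of tips still free — by Theorem~\ref{th forest} a forest game runs $n-1$ moves but that is about \emph{reaching} a tree, not getting stuck; here I need that so long as $\sum t_i \ge 3n$ there are always two tips that can be safely joined. The key local obstruction to a move is: every pair of open tips in a common face is at distance $\le 2$ in $G$ (joining them would close a triangle or a multi-edge). I would show that if the game has terminated after $x$ moves, then at every face the open tips are pairwise ``too close,'' and count: each move uses two tips and the number of tips is constant at $T := \sum t_i \ge 3n$; a terminated configuration with few moves has all $T$ tips packed into at most (number of faces) $\le x+1$ faces with strong distance restrictions, forcing $x$ to be at least linear in $n$. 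Pinning the constant to exactly $4+n$ will come from the extremal configuration — presumably a small number of faces each containing two mutually-distance-$2$ tips around a short even cycle — and matching the vertex/edge/face Euler count for that configuration.

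The main obstacle I anticipate is the lower bound's termination analysis: ruling out clever early-stopping configurations where many open tips are trapped in a single face in mutual distance-$\le 2$ position. I would handle this by a discharging-style or direct counting argument on the terminal graph $G$: bound the number of open tips a single face of a triangle-free planar graph can ``block'' as a function of the face's boundary length, then use the planar girth-$4$ edge bound $|E| \le 2|V| - 4$ together with $|V| = n+x$, $|E| = 2x$ to force $x \ge n+4$. A secondary subtlety is the role of the hypothesis $t_i \ge 3$: with $t_i \le 2$ a spot could be ``used up'' quickly and the linear lower bound could fail, so the proof must genuinely invoke $t_i \ge 3$ (equivalently $T \ge 3n$) when showing a move always exists before move $n+4$. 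I would make sure the counting step is where that hypothesis enters.
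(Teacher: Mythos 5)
Your upper bound is fine and coincides with the paper's argument: every play of the $\mathcal{P}_4$ game is a legal play of the planar game, whose length is the constant $(n-2)+\sum_{i=1}^n t_i$ by Corollary~\ref{corollary}, so $x \le (n-2)+\sum_{i=1}^n t_i$.

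The lower bound has a genuine gap, and it sits exactly at the step you designate as the closer. The triangle-free planar edge bound $|E(G)| \le 2|V(G)|-4$, with $|V(G)| = n+x$ and $|E(G)| = 2x$, reads $2x \le 2(n+x)-4$, i.e.\ $n \ge 2$: it is vacuous and yields no information about $x$ at all, so it cannot ``force $x \ge n+4$.'' The quantity that actually carries the bound is the number of \emph{faces} of the terminal graph: with $|V(G)| = n+x$ and $|E(G)| = 2x$, Euler's formula gives $x = (n-2)+|F(G)|$, so the entire content of the lower bound is the claim that a terminated position has $|F(G)| \ge 6$ (this is precisely what the paper's proof does, asserting $|F(G)|\ge 6$ from $n\ge 2$, $t_i\ge 3$ via a case analysis it omits). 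Your ``tips packed into faces'' discussion gestures in that direction, but you never formulate or prove the face-count statement, and your plan instead routes the final count through the edge bound, where it dies. A secondary slip: a move joining two open tips lying in a common face is blocked only if the tips sit on the same vertex (a $2$-cycle) or on adjacent vertices (a triangle); tips at distance $2$ \emph{can} be joined, producing a $4$-cycle, which is allowed in $\mathcal{P}_4$. So the correct local obstruction is distance $\le 1$, not $\le 2$, and basing the termination analysis on distance $\le 2$ would misclassify positions as stuck that are not.
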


\begin{proof}
Suppose the game ends after $x$ moves and let the resultant graph after the end of the game be $G$ which is a planar graph, in particular. Thus, due to  Corollary~\ref{corollary} we have $x \leq (n-2) + \sum_{i=1}^n t_i$.

As we start with $n$ vertices and add one vertex in each move, 
we have $|V(G_x)|=(n+x)$. Also, as we start with zero edges and add two edges in each move, we have $|E(G_x)| = 2x$. 
Furthermore, since $t_i \geq 3$ and $n \geq 2$, 
we are forced to have $|F(G)| \geq 6$
which can be observed through a brute force case analysis that we omit. 

Hence by Euler's Formula, we have
\begin{align*}
|V(G)|-|E(G)|+|F(G)|=2 &\implies 
(x+n)-2x+6\leq 2\\
&\implies x\geq 4+n
\end{align*}
Therefore, $(4+n) \leq x \leq (n-2) +\sum_{i=1}^n t_i$.  
\end{proof}

A natural question we can ask here is whether there is a play 
of $BS_n(\mathcal{P}_4:t_1,t_2,...,t_n)$ that ends after 
$(n-2) + \sum_{i=1}^n t_i$
moves and one that ends after $(4+n)$ moves. In the next result, we will see that indeed for $n=2$, such plays exist when the ratio of $p$ and $q$ is at most two.

\begin{theorem}
There exists plays of $BS_2(\mathcal{P}_4:p,q)$ which ends after 
$(p+q)$ and $6$ moves, respectively, for $p \leq q \leq 2p$. 
\end{theorem}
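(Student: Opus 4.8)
The plan is to exhibit two explicit plays of $BS_2(\mathcal{P}_4:p,q)$, one of length $p+q$ and one of length $6$; by Theorem~\ref{theorem1.8} (with $n=2$) these are exactly the largest and the smallest possible lengths, so nothing else is needed. Write $A,B$ for the two starting spots, carrying $p$ and $q$ tips respectively. I will use throughout the elementary observation that a move joining tips at vertices $x,y$ is legal in $\mathcal{P}_4$ exactly when $x,y$ lie on a common face with $x\neq y$ and $x\not\sim y$ (joining tips at one vertex makes a digon, at adjacent vertices a triangle), and that a legal move preserves girth $\ge 4$ since any newly created cycle has length at least $2+\mathrm{dist}(x,y)\ge 4$; as every intermediate graph is a subgraph of the final one, it then suffices to check that the \emph{final} graph of each play is triangle-free, which I will do by displaying a proper $2$-coloring. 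Recall also, from the proof of Theorem~\ref{th orientable surface}, that a position is terminal precisely when no face carries two tips at distinct non-adjacent vertices.

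For the length-$(p+q)$ play I would use two phases. \emph{Phase~1:} make $p$ moves, the $k$-th joining a free tip of $A$ to a free tip of $B$; this is always possible because before each such move both $A$ and $B$ still have a free tip and lie on a common face, and after these $p$ moves the graph is $K_{2,p}$ with parts $\{A,B\}$ and $\{w_1,\dots,w_p\}$: it is bipartite, $A$ is exhausted, $B$ carries $q-p$ tips, each $w_i$ carries two tips, and there are $p$ quadrilateral faces, the face $A\,w_i\,B\,w_{i+1}$ carrying the tips of $w_i$ and $w_{i+1}$ that point into it. \emph{Phase~2:} here the hypothesis $q\le 2p$ is used --- distribute the $q-p$ leftover tips of $B$ so that each face receives at most one, so that every face carries either two tips (at $w_i,w_{i+1}$) or three (one more at $B$); then resolve each face \emph{inside itself} by first joining the tip of $w_i$ to the tip of $w_{i+1}$ (legal, since $w_i\not\sim w_{i+1}$) --- a two-tip face thereby becomes two singleton faces, while a three-tip face becomes a singleton face plus a face whose two tips are at $B$ and at the freshly created vertex, which are non-adjacent, so one further move finishes it. This totals $p+\bigl((2p-q)+2(q-p)\bigr)=p+q$ moves and ends with exactly one tip per face, hence in a terminal position. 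I expect $q\le 2p$ to be genuinely necessary --- indeed sharp for the long play: if two spare tips of $B$ fall in the same face, the forced first move of its resolution strands a tip of $B$ beside a neighbor of $B$ and blocks that face before the count reaches $p+q$.

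For the length-$6$ play (assuming $p,q\ge 3$, which holds since $t_i\ge 3$) I would play the explicit sequence: (1) join a tip of $A$ to a tip of $B$, creating $w_1$; (2) join a tip of $A$ to a tip of $B$, creating $w_2$, so the graph is now the $4$-cycle $A\,w_1\,B\,w_2$ --- park all remaining tips of $A$ inside this quadrilateral near $A$ and all remaining tips of $B$ inside near $B$; (3) join the tips of $w_1$ and $w_2$ that point into the interior, routing the arc through the interior, creating $w_3$ and splitting the interior into faces $A\,w_1\,w_3\,w_2$ and $B\,w_1\,w_3\,w_2$; (4) join the remaining (exterior) tips of $w_1$ and $w_2$ through the exterior, creating $w_4$ and splitting the outer face into $A\,w_1\,w_4\,w_2$ and $B\,w_1\,w_4\,w_2$; (5) join a tip of $A$ to a tip of $w_3$, now co-facial in $A\,w_1\,w_3\,w_2$ and legal since $A\not\sim w_3$, creating $w_5$; (6) join a tip of $B$ to the surviving tip of $w_3$, creating $w_6$. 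Then $V=8$, $E=12$, $F=6$, the graph is bipartite (girth $\ge 4$) and was drawn planarly at every step, and in the terminal position the two faces at $w_4$ carry one tip each, the faces $A\,w_1\,w_5\,w_3$ and $A\,w_2\,w_5\,w_3$ carry only tips of $A$ and of $w_5$ (which are adjacent), and symmetrically the two faces at $w_6$ carry only tips of $B$ and of $w_6$; so every face is blocked and the play has length $6$.

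The main obstacle I anticipate is the planar bookkeeping rather than any conceptual difficulty: at each move one must check that the two tips to be joined can be placed in a genuinely common face and that the new arc can be routed without crossings, and in the length-$6$ play that tips of $A$ and $B$ can be parked in the precise corners the argument needs. This is mechanical but fiddly; I would make it rigorous by isolating the small lemma that a tip at a vertex $v$ may be assigned freely to any face incident to $v$, and transparent by including the full sequence of figures for both plays.
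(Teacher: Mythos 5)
Your proposal is correct and takes essentially the same route as the paper: both arguments exhibit explicit plays, your long play (draw $p$ non-crossing $A$--$B$ curves so that the $q-p$ spare tips of $B$ land at most one per quadrilateral face, then close each face with one or two moves) is the paper's sequence $x_iy_{j}$, $t_it_{i+1}$, $s_iy_{2i}$, $t_1t_p$ in a different order, and your $6$-move play coincides with the paper's up to reordering of the last four moves. The one caveat is that your proposed ``a tip at $v$ may be assigned freely to any face incident to $v$'' lemma is false once curves have been drawn (each tip's face is fixed by the drawing), so the co-facial/parking claims should instead be secured the way the paper does it, by specifying in advance which tips, in their cyclic order, are joined at each step.
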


\begin{proof}
Let the two vertices present in the initial stage of the game 
$BS_2(\mathcal{P}_4 : p,q)$ be $x$ and $y$ positioned on a horizontal line, $x$ 
being on the left side of $y$. Furthermore, let $x_1, x_2, ..., x_p$ be the open
tips coming out of $x$, arranged in a clockwise order around $x$ and 
let $y_1, y_2, ..., y_q$ be the open tips coming out of $y$, arranged in an 
anti-clockwise order around $y$.

First, we are going to describe the play of $BS_2(\mathcal{P}_4 : p,q)$ which 
ends after $(p+q)$ moves for $q=(p+r)$ where $0\leq r\leq p$. Observe that it is
enough to describe the required sequences of the $(p+q)$ moves in the play. 

Let the first move be connecting $x_1$ to $y_1$ with a curve and putting the crossbar $t_1$ on it. In the subsequent moves, we connect $x_i$ to $y_{2i-1}$ with a curve and put the crossbar $t_i$ on it for $i=2,3,\cdots, r+1$.  After that we connect $x_j$ to $y_{r+j}$ with a curve and put the crossbar $t_j$ for $j=r+2,r+3,\cdots,p$. 
That means, we have made a total of $p$ moves till now.

Next, we connect 
$t_{i}$ to $t_{i+1}$ with a curve and put a crossbar $s_{i}$
on it for $i=1,2,\cdots,p-1$, which are $(p-1)$ more moves. 
Then we connect $s_i$ with $y_{2i}$ with a curve and put a crossbar on it for $i = 1, 2, \cdots, r$, which amounts to $r$ moves. 
Finally, we connect $t_1$ with $t_p$ with a curve and put a crossbar on it. Observe that, this ends the play as no more moves can be made and a total of $p+(p-1)+r+1=2p+r=p+q$ moves are made. Thus we are done with the first part of the proof.

Secondly, we  are going to describe the play of 
$BS_2(\mathcal{P}_4 : p,q)$ which ends after $6$ moves. Observe that it is enough to describe the required sequences of the $6$ moves in the play.

The first two moves in this case are connecting $x_1$ to $y_1$ and $x_2$ to $y_2$ with two curves and putting the crossbars $t_1$ and $t_2$ on them, respectively. 
Notice that the above two curves divide the plane into two regions: $R_1$ containing the open tip $x_3$, $R_2$ not containing it. 
We connect $t_1$ and $t_2$ with a curve through $R_1$ and put a crossbar $t_3$ on it. Next, we connect $x_p$ and $y_3$ to  $t_3$ two curves and put crossbars on them. Finally, connect $t_1$ and $t_2$ with a curve through $R_2$ and put a crossbar on it. Observe that, this ends the play as no more moves can be made and a total of $6$ moves are made. Thus, we are done with the second part of the proof as well. 
\end{proof}

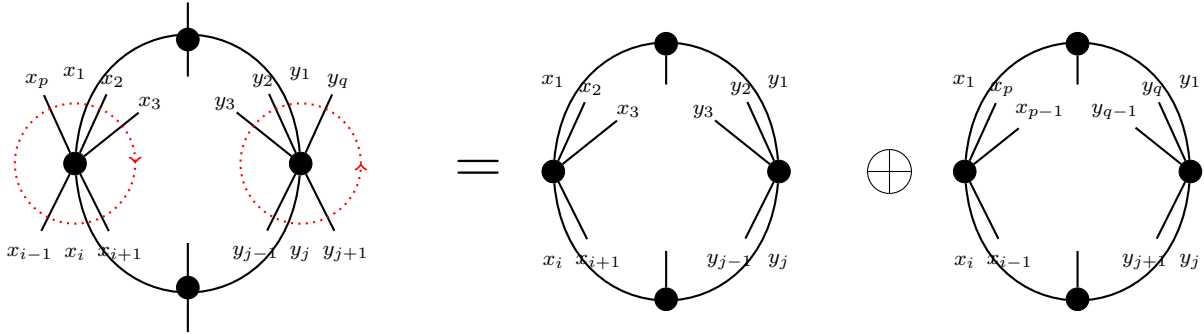
\begin{figure}%
    \centering
\begin{tikzpicture} 
\node[circle, draw,inner sep=0.1mm,minimum size=3mm, fill] (a) at (0,0) {};
\node [circle, draw, inner sep=0.1mm, minimum size=3mm, fill] (b) at (3,0) {};
\node [circle, draw,inner sep=0.1mm,minimum size=1mm, white] (a1) at (0,1.2) {\textcolor{black}{\scriptsize$x_1$}};
\node [circle, draw,inner sep=0.1mm,minimum size=1mm, white] (a2) at (0,-1.2) {\textcolor{black}{\scriptsize$x_i$}};
\node [circle, draw,inner sep=0.1mm,minimum size=1mm, white] (a3) at (0.5,1.1) {\textcolor{black}{\scriptsize$x_2$}};
\node [circle, draw,inner sep=0.1mm,minimum size=1mm, white] (a4) at (1,0.8) {\textcolor{black}{\scriptsize$x_3$}};
\node [circle, draw,inner sep=0.1mm,minimum size=1mm, white] (a5) at (-0.5,1.1) {\textcolor{black}{\scriptsize$x_p$}};
\node [circle, draw,inner sep=0.1mm,minimum size=1mm, white] (a6) at (-0.6,-1.2) {\textcolor{black}{\scriptsize$x_{i-1}$}};
\node [circle, draw,inner sep=0.1mm,minimum size=1mm, white] (a7) at (0.6,-1.2) {\textcolor{black}{\scriptsize$x_{i+1}$}};

\node [circle, draw,inner sep=0.1mm,minimum size=1mm, white] (b1) at (3,1.2) {\textcolor{black}{\scriptsize$y_1$}};
\node [circle, draw,inner sep=0.1mm,minimum size=1mm, white] (b2) at (3,-1.2) {\textcolor{black}{\scriptsize$y_j$}};
\node [circle, draw,inner sep=0.1mm,minimum size=1mm, white] (b3) at (2.5,1.1) {\textcolor{black}{\scriptsize$y_2$}};
\node [circle, draw,inner sep=0.1mm,minimum size=1mm, white] (b4) at (2,0.8) {\textcolor{black}{\scriptsize$y_3$}};
\node [circle, draw,inner sep=0.1mm,minimum size=1mm, white] (b5) at (3.5,1.1) {\textcolor{black}{\scriptsize$y_q$}};
\node [circle, draw,inner sep=0.1mm,minimum size=1mm, white] (b6) at (2.4,-1.2) {\textcolor{black}{\scriptsize$y_{j-1}$}};
\node [circle, draw,inner sep=0.1mm,minimum size=1mm, white] (b7) at (3.6,-1.2) {\textcolor{black}{\scriptsize$y_{j+1}$}};
\node [circle, draw, inner sep=0.1mm, minimum size=1mm, white] (c) at (1.5,2.2) {};
\node [circle, draw,inner sep=0.1mm,minimum size=1mm, white] (c1) at (1.5,1.1) {};
\node [circle, draw, inner sep=0.1mm, minimum size=1mm, white] (c2) at (1.5,-2.3) {};
\node [circle, draw,inner sep=0.1mm,minimum size=1mm, white] (c3) at (1.5,-1) {};
\node[circle, draw,inner sep=0.1mm,minimum size=3mm, fill] (c4) at (1.5,1.65) {};
\node[circle, draw,inner sep=0.1mm,minimum size=3mm, fill] (c5) at (1.5,-1.65) {};
\draw[thick] (a)--(a3);
\draw [thick](a)--(a4);
\draw [thick](a)--(a5);
\draw[thick] (a)--(a6);
\draw [thick](a)--(a7);
\draw[thick] (b)--(b3);
\draw [thick](b)--(b4);
\draw[thick] (b)--(b5);
\draw[thick] (b)--(b6);
\draw [thick](b)--(b7);
\draw[thick] (c)--(c1);
\draw[thick] (c2)--(c3);
\path [thick][thick](a) edge [bend left=85,looseness=1.8] (b);
\path[thick][thick] (a) edge [bend right=85,looseness=1.8] (b);

\draw[dotted,red,<-,thick] (0.8,0) arc[start angle=0, end angle=360, radius=0.8];
\draw[dotted,red,->,thick] (3.8,0) arc[start angle=0, end angle=360, radius=0.8];

\end{tikzpicture}
\qquad
\begin{tikzpicture} 
\node at (-1,0) {\huge $=$};
\node[circle, draw,inner sep=0.1mm,minimum size=3mm, fill] (a) at (0,0) {};
\node [circle, draw, inner sep=0.1mm, minimum size=3mm, fill] (b) at (3,0) {};
\node [circle, draw,inner sep=0.1mm,minimum size=1mm, white] (a1) at (0,1.2) {\textcolor{black}{\scriptsize$x_1$}};
\node [circle, draw,inner sep=0.1mm,minimum size=1mm, white] (a2) at (0,-1.2) {\textcolor{black}{\scriptsize$x_i$}};
\node [circle, draw,inner sep=0.1mm,minimum size=1mm, white] (a3) at (0.5,1.1) {\textcolor{black}{\scriptsize$x_2$}};
\node [circle, draw,inner sep=0.1mm,minimum size=1mm, white] (a4) at (1,0.8) {\textcolor{black}{\scriptsize$x_3$}};
\node [circle, draw,inner sep=0.1mm,minimum size=1mm, white] (a7) at (0.6,-1.2) {\textcolor{black}{\scriptsize$x_{i+1}$}};

\node [circle, draw,inner sep=0.1mm,minimum size=1mm, white] (b1) at (3,1.2) {\textcolor{black}{\scriptsize$y_1$}};
\node [circle, draw,inner sep=0.1mm,minimum size=1mm, white] (b2) at (3,-1.2) {\textcolor{black}{\scriptsize$y_j$}};
\node [circle, draw,inner sep=0.1mm,minimum size=1mm, white] (b3) at (2.5,1.1) {\textcolor{black}{\scriptsize$y_2$}};
\node [circle, draw,inner sep=0.1mm,minimum size=1mm, white] (b2) at (2.35,-1.2) {\textcolor{black}{\scriptsize$y_{j-1}$}};
\node [circle, draw,inner sep=0.1mm,minimum size=1mm, white] (b4) at (2,0.8) {\textcolor{black}{\scriptsize$y_3$}};

\node[circle, draw,inner sep=0.1mm,minimum size=3mm, fill] (c4) at (1.5,1.7) {};
\node[circle, draw,inner sep=0.1mm,minimum size=3mm, fill] (c5) at (1.5,-1.7) {};
\draw[thick][thick] (a)--(a3);
\draw [thick][thick](a)--(a4);
\draw[thick][thick] (a)--(a7);
\draw[thick][thick] (b)--(b3);
\draw [thick][thick](b)--(b4);
\draw[thick][thick] (b)--(b6);
\draw [thick][thick](c4)--(c1);
\draw [thick][thick](c5)--(c3);
\path [thick][thick](a) edge [bend left=85,looseness=1.8] (b);
\path[thick][thick] (a) edge [bend right=85,looseness=1.8] (b);


\end{tikzpicture}
\qquad
\begin{tikzpicture} 
\node at (-1,0) (XOR-aa)[XOR,scale=1.5] {};
\node[circle, draw,inner sep=0.1mm,minimum size=3mm, fill] (a) at (0,0) {};
\node [circle, draw, inner sep=0.1mm, minimum size=3mm, fill] (b) at (3,0) {};
\node [circle, draw,inner sep=0.1mm,minimum size=1mm, white] (a1) at (0,1.2) {\textcolor{black}{\scriptsize$x_1$}};
\node [circle, draw,inner sep=0.1mm,minimum size=1mm, white] (a2) at (0,-1.2) {\textcolor{black}{\scriptsize$x_i$}};
\node [circle, draw,inner sep=0.1mm,minimum size=1mm, white] (a3) at (0.5,1.1) {\textcolor{black}{\scriptsize$x_p$}};
\node [circle, draw,inner sep=0.1mm,minimum size=1mm, white] (a4) at (1,0.8) {\textcolor{black}{\scriptsize$x_{p-1}$}};
\node [circle, draw,inner sep=0.1mm,minimum size=1mm, white] (a7) at (0.6,-1.2) {\textcolor{black}{\scriptsize$x_{i-1}$}};

\node [circle, draw,inner sep=0.1mm,minimum size=1mm, white] (b1) at (3,1.2) {\textcolor{black}{\scriptsize$y_1$}};
\node [circle, draw,inner sep=0.1mm,minimum size=1mm, white] (b2) at (3,-1.2) {\textcolor{black}{\scriptsize$y_j$}};
\node [circle, draw,inner sep=0.1mm,minimum size=1mm, white] (b3) at (2.5,1.1) {\textcolor{black}{\scriptsize$y_q$}};
\node [circle, draw,inner sep=0.1mm,minimum size=1mm, white] (b4) at (2,0.8) {\textcolor{black}{\scriptsize$y_{q-1}$}};
\node [circle, draw,inner sep=0.1mm,minimum size=1mm, white] (b6) at (2.4,-1.2) {\textcolor{black}{\scriptsize$y_{j+1}$}};

\node[circle, draw,inner sep=0.1mm,minimum size=3mm, fill] (c4) at (1.5,1.7) {};
\node[circle, draw,inner sep=0.1mm,minimum size=3mm, fill] (c5) at (1.5,-1.7) {};
\draw[thick] (a)--(a3);
\draw [thick](a)--(a4);
\draw [thick](a)--(a7);
\draw[thick] (b)--(b3);
\draw[thick] (b)--(b4);
\draw[thick] (b)--(b6);
\draw [thick][thick](c4)--(c1);
\draw [thick][thick](c5)--(c3);
\path[thick] (a) edge [bend left=85,looseness=1.8] (b);
\path[thick] (a) edge [bend right=85,looseness=1.8] (b);


\end{tikzpicture}

    \caption{Expressing a partially played {\sc Brussels sprout} game as a sum of two {\sc Circular sprout} games.} \label{p+q_figures}
\end{figure}

From the above theorem, we can notice that the game 
$BS_2(\mathcal{P}_4:p,q)$ does not have a clear winner, and depending on the play, either Player 1 or Player 2 can win. Therefore, studying which player has a winning strategy makes sense. However, when we tried to do it by hand, it became extremely difficult, even for small values of $p$ and $q$.  Thus, a natural question to ask in this context is the following.

\begin{question}\label{BS_2(P_4)}
Which player has a winning strategy for the game 
$BS_2(\mathcal{P}_4:p,q)$ for all $p,q \geq 3$?
\end{question}

Keeping in mind, that our main goal of this article is to develop the theory around Sprout's conjecture (that is, Conjecture~\cite{lemoine2010computer}, we will, in fact, consider an even more difficult problem and solve it to get a conclusive answer to Question~\ref{BS_2(P_4)} as a direct corollary in the upcoming sections. The above mentioned stronger result requires introducing a new game called the {\sc Circular sprout} and involves the notion of ``nimber''~\cite{albert2019lessons}.





\section{The {\sc Circular sprout} game and its relation with $BS_2(\mathcal{P}_4:p,q)$}\label{sec circular}
While studying the game $BS_2(\mathcal{P}_4:p,q)$, we encountered another similar game which we found to be  interesting. 
Let us define this new game independently, and in a generalized form, even though in this article we will study only a specific restriction of it which will help us in 
studying $BS_2(\mathcal{P}_4:p,q)$. 

This new game, named the $n$-{\sc Circular sprout} game for the family $\mathcal{F}$ with parameters $(t_1, t_2, \cdots, t_n)$, is denoted by the notation  $CS_n(\mathcal{F}: t_1, t_2, \cdots, t_n)$. 
The initial set up of this game consists of $n$ dots $v_1, v_2, \cdots, v_n$  arranged in a clockwise order on the perimeter of a circle with $v_i$ having $t_i$ open tips coming out in the interior of the circle. The rest of the rules of the game are the same as {\sc Brussels sprout} with the following added constraint: the curves drawn by the players must be entirely contained in the interior of the circle.

Next let us observe how this game is related to $BS_2(\mathcal{P}_4:p,q)$. Let the open tips around the first spot be $x_1, x_2, \cdots, x_p$ arranged in a clockwise order, and let the open tips around the second spot be 
$y_1, y_2, \cdots, y_q$ arranged in an anti-clockwise order. 
Observe that, the very first move by Player 1 in the game $BS_2(\mathcal{P}_4:p,q)$ is 
unique up to renaming of the open tips. Therefore, without loss of generality one can assume that the very first move is joining the open tip $x_1$ with the open tip $y_1$ with a curve and then placing a crossbar on it. 
After that, the second player is forced to join an open tip $x_i$ to an open tip $y_j$, for some $i,j \neq 1$. This move will enable us to write the present game as the sum of two {\sc Circular sprout} game: $CS_4(\mathcal{P}_4: i-2, 1, j-2, 1)$ and 
$CS_4(\mathcal{P}_4: p-i, 1, q-j, 1)$ for some $i \in \{2, 3, \cdots, p\}$ and $j \in \{2, 3, \cdots, q\}$. Hence it will be enough to study and understand 
the games of the type $CS_4(\mathcal{P}_4: p, 1, q, 1)$ (see Figure~\ref{p+q_figures} for reference).

\subsection{Nimber}
Before moving forward with the study of these games, we would like to point out that
all the games discussed here are two player finite impartial games, and thus their  \textit{nimber}~\cite{siegel2013combinatorial}  can be calculated~\cite{siegel2013combinatorial}. To conclude which player has a winning strategy for a particular two player impartial game, it is enough to calculate the nimber value of the game; the second player has an winning strategy if and only if the nimber of a game is $0$~\cite{albert2019lessons}.

Recall that, to calculate the nimber of a game $X$, one first need to generate 
the entire game tree having $X$ as its root. 
Next, the leaves of the tree are all assigned nimber equal to $0$, while 
for the other nodes, its nimber is the
least non-negative integer which does not
occur as a nimber of any of its children. Let us denote the nimber of a game $X$ by $\eta[X]$ for convenience. 
We know that if an impartial game 
$X$ can be written as a sum of two impartial games, $Y$ and $Z$, then the nimber of $X$ can be given by $\eta[X] = \eta[Y] \oplus \eta[Z]$, where $\oplus$ denote the XOR operation~\cite{albert2019lessons}.

Thus our objective now is to calculate nimber of the game 
$CS_4(\mathcal{P}_4: p, 1, q, 1)$ for $p,q \geq 0$. Note that, the games 
$CS_4(\mathcal{P}_4: p, 1, q, 1)$ and 
$CS_4(\mathcal{P}_4: q, 1, p, 1)$ are the exact same games up to symmetry. Thus it is enough to calculate 
$\eta[CS_4(\mathcal{P}_4: p, 1, q, 1)]$ for all $q \geq p \geq 0$.

\section{Nimber of $CS_4(\mathcal{P}_4:p,1,q,1)$}\label{sec nimber of circular}\label{Nimbersec}
We assume $CS_4(\mathcal{P}_4:p,1,q,1)$ to be embedded as $p$ on the left, $q$ on the right and the singular open tips to be at the top and bottom.


\begin{theorem}\label{th nimber}
For all $0 \leq p \leq q$, we have 
\begin{equation*}
\eta[CS_4(\mathcal{P}_4:p, 1, q, 1)] =
\begin{cases} 
      1 & \text{if $p=q$}, \\
      \frac{4}{5} (p+q-i)+2 \lfloor\frac{i}{4}\rfloor & \text{if $p<q<2p-\lfloor\frac{|p-2|}{2}\rfloor$ where $i \equiv p+q (mod\ 5)$}\\
      2p & \text{if $q \geq 2p-\lfloor\frac{|p-2|}{2}\rfloor$.}
   \end{cases}
\end{equation*}
\label{eq formula t1t3}
where $1,2,3,4,5$ are the representative of the integers modulo $5$.
\end{theorem}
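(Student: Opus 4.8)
We propose to compute $\eta[CS_4(\mathcal P_4:p,1,q,1)]$ by structural induction on the position, using the Sprague--Grundy calculus. A suitable well-founded order is the number of moves still available: it is finite (the game terminates after boundedly many moves, as in the counting of Section~\ref{sec sparse}, e.g.\ Theorem~\ref{theorem1.8}), and it strictly decreases along any play. The structural engine is that a curve drawn in the interior of a disk separates it into two sub-disks; hence every legal first move from $CS_4(\mathcal P_4:p,1,q,1)$ turns the position into the disjoint union of two strictly smaller circular-sprout games, one per sub-disk, whose nimber equals the XOR of the (inductively known) nimbers of the two pieces. The nimber sought is then the mex, over all legal first moves and over all choices of which tip is used, of these XOR values. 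So the proof reduces to: (i) describing exactly, with parameters, the sub-positions that can occur; and (ii) showing that the set of XOR values they produce is precisely $\{0,1,\dots,N-1\}$, where $N=N(p,q)$ is the claimed value.

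\textbf{Cataloguing the moves and sub-positions.}
The first task is a classification of the legal first moves together with the resulting decomposition: a move joins two tips of distinct dots (joining two tips of one dot would create a multi-edge, excluded by girth $\ge 4$), the new curve plus an arc of the circle bounds each sub-disk, and one records, as a function of which tip of the big dot(s) was used (equivalently, of how the tips split between the two sides), the parameter list of each sub-disk. Many sub-disks that occur are degenerate: a dot with no tip may be deleted, and a single-tip dot boxed in by tipless dots is essentially dead; I would dispatch these with short auxiliary lemmas, which also furnish the base cases of the induction --- in particular $p\in\{0,1,2\}$ needs separate treatment, the $|p-2|$ in the threshold flagging $p=2$ as slightly exceptional. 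One technical point must be settled here: a sub-disk is not literally a fresh $CS_4(\mathcal P_4:\cdot,1,\cdot,1)$ board, because its boundary already carries some edges, which forbids a few further triangle-creating moves; so the induction should be run over the natural enlarged class of circular-sprout positions with prescribed boundary edges, or be preceded by a reduction lemma showing those extra restrictions do not affect the relevant nimbers.

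\textbf{The mex, case by case.}
With the catalogue in hand I would prove the formula in each of the three regimes via the two mex inequalities: $\eta\ge N$ demands that every $m$ with $0\le m<N$ is the nimber of some child, and $\eta\le N$ demands that no child has nimber exactly $N$. In the balanced case $p=q$ the split that sends all remaining tips of one big dot to one side produces a sum $X+X$ of nimber $0$, so $\eta\ge 1$, and one checks that no child has nimber $1$, giving $\eta=1$. The saturated case $q\ge 2p-\lfloor|p-2|/2\rfloor$ rests on a long side being impossible to exploit fully under triangle-freeness: one shows the sub-disk inheriting the bulk of $q$ is itself in its own saturated regime with a predictable nimber, while the complementary small piece can be tuned over a full range, so that the children realise exactly $\{0,\dots,2p-1\}$ and never $2p$. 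In the intermediate case $p<q<2p-\lfloor|p-2|/2\rfloor$ one substitutes the inductive formula into both pieces (each falling into one of the three cases) and shows that the XOR values sweep the interval $[0,N-1]$ and stop there; the residue $i\equiv p+q\pmod 5$ and the floor terms $\lfloor i/4\rfloor$ come out of the arithmetic of this sweep, which reflects that triangle-freeness caps the attainable ``nimber density'' at $\tfrac45$ per tip --- roughly one unit of nimber is lost for every five tips.

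\textbf{Main obstacle.}
The heart of the argument, and the part I expect to be hardest, is the realisability direction in the intermediate regime: showing that, as one ranges over all legal first moves and all tip-splits, the set of XOR values $\eta_1\oplus\eta_2$ of the two sub-disks is exactly $\{0,1,\dots,N-1\}$. This is a covering statement for XOR-sums of a structured but irregular set of integers (the values of the inductive formula), which must be verified uniformly across the residues of $p$, $q$ and $p+q$ modulo $5$ and the parities signalled by the floors; realising the \emph{odd} value $N-1$ (note $N$ is even in this regime) needs its own short argument, typically by driving one sub-disk into a balanced position of nimber $1$ and the other into one of nimber $N-2$. Getting the bookkeeping of the degenerate and boundary-edged sub-positions to mesh with this covering argument, together with checking that the formula matches along the boundary between the second and third cases, are the points most likely to demand care.
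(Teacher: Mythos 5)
Your strategy is the same as the paper's: strong induction, using the fact that every first move in $CS_4(\mathcal{P}_4:p,1,q,1)$ either joins the two single tips or joins some $x_{p'+1}$ to $y_{q'+1}$, splitting the disk into two smaller circular positions $CS_4(\mathcal{P}_4:p',1,q',1)$ and $CS_4(\mathcal{P}_4:p'',1,q'',1)$ with $p''=p-p'-1$, $q''=q-q'-1$, and then pinning down the mex in each of the three regimes by showing that every value below the claimed $N$ is realised by some child while $N$ itself is not. One point where you are actually more careful than the paper: the sub-disks do carry boundary edges that forbid joining a tip to an adjacent boundary spot, and the paper's claim that only two types of moves exist is only coherent if the bounding circle is treated as carrying such edges throughout; under that convention the sub-positions really are fresh boards, and your proposed reduction lemma (or induction over boundary-edged positions) is the right way to make this explicit. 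Your stated individual claims are fine (e.g.\ the nimber-$0$ child in the balanced case via the mirror split $x_1$--$y_q$; the paper instead gets $\eta\geq 1$ uniformly from the $t_1$--$b_1$ move).

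The gap is that everything constituting the actual proof is deferred rather than carried out. Concretely, the paper must and does supply: base lemmas such as $\eta[CS_4(\mathcal{P}_4:0,1,q,1)]=0$ and $\eta[CS_4(\mathcal{P}_4:1,0,q,0)]=1$; in the regime $q\geq 2p-\lfloor\frac{p-2}{2}\rfloor$, explicit splits realising every odd value (a balanced piece of nimber $1$ paired with a piece shown, via the inequality $q''\geq 2p''-\lfloor\frac{p''-2}{2}\rfloor$, to have nimber $2p''$) and every even value (splits of the form $(p',1,q-1,1)+(p'',1,0,1)$), together with the bound $2p'+2p''<2p$ excluding $2p$; and in the intermediate regime, a residue-by-residue comparison over $i\equiv p+q \pmod 5$ and the admissible pairs $\{i',i''\}$ (the paper's Table~\ref{tab all cases}) showing no child attains $N$, an inequality handling splits with one saturated piece, the estimate $n_2\geq n_1-2$ for $\eta[CS_4(\mathcal{P}_4:p-1,1,q-1,1)]$ used to realise all odd values by shifting a balanced sub-piece, and a separate construction ($CS_4(\mathcal{P}_4:p-2,1,q-3,1)+CS_4(\mathcal{P}_4:1,1,2,1)$) for the corner case $p$ odd with $n_1=2(p-1)$. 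Your sketch correctly identifies this covering-plus-exclusion bookkeeping as the heart of the matter, but since none of it is executed, the proposal is a sound plan that matches the published argument rather than a complete proof.
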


\subsection{Proof of Theorem~\ref{th nimber}}\label{sec proofs}
The proof of Theorem~\ref{th nimber} uses the method of structural induction and is lengthy and complicated. For the benefit of the reader, we will divide the proof into several lemmas and observations, and also present it across different subsections.

\subsubsection{The basic set up for the proof}
Let us assume a typical pictorial representation of the game $CS_4(\mathcal{P}_4:p, 1, q, 1)$ to be as follows.
Let the reference circle of the game 
be the circle having unit radius and $(0,0)$ as its centre. 
After that, let the four spots on it, along with their positions, be
$x$ at $(-1,0)$, $t$ at $(0,1)$, $y$ at $(1,0)$ and $b$ at $(0,-1)$. 
Furthermore, 
let $x_1, x_2, ..., x_p$ be the open
tips coming out of $x$, arranged in a clockwise order around $x$ and 
let $y_1, y_2, ..., y_q$ be the open tips coming out of $y$, arranged in an 
anti-clockwise order around $y$. Also let $t_1$ and $b_1$ be the open tips coming out of $t$ and $b$, respectively.

Notice that, there can be two types of moves that the first player can perform on the given initial stage of the game $CS_4(\mathcal{P}_4:p, 1, q, 1)$. 
The first type is to connect $t_1$ with $b_1$ with a curve and then put a crossbar on it. However, this creates a game equivalent to the sum of the two games
$CS_4(\mathcal{P}_4:p, 0, 1, 0)$ and $CS_4(\mathcal{P}_4:1, 0, q, 0)$. 
The second type of move 
is to join $x_{p'+1}$ with $y_{q'+1}$ with a curve and then put a crossbar on it. This 
creates a game equivalent to the sum of the two games 
$CS_4(\mathcal{P}_4:p', 1, q', 1)$ and $CS_4(\mathcal{P}_4: p'', 1, q'', 1)$, where 
$p' \in \{0,1,\cdots, p'-1\}$,
$q' \in \{0,1,\cdots, q'-1\}$, 
$p'' = p-p'-1$ and $q'' = q-q'-1$. 
From now on, we will assume 
$p' \in \{0,1,\cdots, p'-1\}$,
$q' \in \{0,1,\cdots, q'-1\}$, 
$p'' = p-p'-1$ and $q'' = q-q'-1$ as general convention for this proof.

Observe that, irrespective of the moves, the game becomes a sum of similar kinds of games allowing us to use induction. That is why, we will use the method of strong induction on 
$p$ to prove this result.

\subsubsection{Base case}
Before going to the base case, let us first prove some useful lemmas. 

\begin{lemma}
We have
\begin{enumerate}[(i)]
    \item $\eta[CS_4(\mathcal{P}_4:0, 0, q, 0)] = 0$ for all $q \geq 0$, 
    \item $\eta[CS_4(\mathcal{P}_4:0, 1, 0, 1)] = 1$,
    
    \item $\eta[CS_4(\mathcal{P}_4:1, 0, q, 0)]=1$ for all $q \geq 1$, 
    
     \item $\eta[CS_4(\mathcal{P}_4:0, 1, q, 1)]=0$ for all  $q\geq 1$. 
\end{enumerate}
\end{lemma}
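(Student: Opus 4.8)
The plan is to establish each of the four identities by analyzing the (very limited) moves available from the stated starting positions, invoking the mex-rule for nimbers and the XOR-sum formula $\eta[X] = \eta[Y]\oplus\eta[Z]$ for a disjunctive sum. I would prove the statements essentially in the order listed, since later parts lean on earlier ones.

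For (i), $CS_4(\mathcal{P}_4:0,0,q,0)$ has all its open tips (if any) at a single spot $y$, so no curve can ever be drawn between two \emph{distinct} open tips lying in a common face — actually the only candidate moves join two of $y$'s tips, but since $q$ tips emanate from one vertex into one region, joining two of them would create a loop at $y$ and hence a multiple edge or a short cycle; I would check that in $\mathcal{P}_4$ (girth $\geq 4$) no such move is legal (a loop or $2$-cycle is forbidden). Hence the position is terminal and $\eta = 0$. For (ii), from $CS_4(\mathcal{P}_4:0,1,0,1)$ the unique move (up to symmetry) is to connect $t_1$ to $b_1$, yielding a position with two new tips inside a face bounded by the new edge; I would argue this resulting position is terminal (any further move would close a short cycle), so it has nimber $0$, and therefore the mex is $1$. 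For (iii), from $CS_4(\mathcal{P}_4:1,0,q,0)$ I would check the only legal move joins $x_1$ to some $y_j$ (joining $x_1$ to itself or two $y$-tips together is illegal as in (i)); the result is a position with all remaining tips concentrated and a girth constraint that blocks everything further — so every child has nimber $0$ and the mex is $1$. One should double-check whether the move is legal at all when $q$ is small; if $q=0$ this collapses to (i)-type and should be handled as the terminal case contributing nimber $0$ to the child set, still giving mex $1$ provided at least one move exists, which it does for $q\ge 1$.

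For (iv), $\eta[CS_4(\mathcal{P}_4:0,1,q,1)]=0$ for $q\ge 1$ is the only one with genuine game-tree branching. Here the moves are: (a) connect $t_1$ to $b_1$, and (b) connect $t_1$ (or $b_1$) to some $y_j$, and (c) possibly connect $b_1$ to some $y_j$. I would show move (a) leads to a sum of the form $CS_4(\mathcal{P}_4:0,0,*,0)+CS_4(\mathcal{P}_4:*,0,q,0)$-type pieces whose nimbers are $0$ and $1$ by (i) and (iii), giving child-nimber $0\oplus 1 = 1$; and I would show every move of type (b)/(c) produces a sum of two circular-sprout positions each of the form $CS_4(\mathcal{P}_4:0,1,q',1)$ with $q' < q$, whose nimbers are $0$ by the induction hypothesis on $q$, so such children have nimber $0\oplus 0 = 0$. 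To conclude the mex is $0$ we must verify that $0$ does \emph{not} appear among the children — but move (a) gives a child of nimber $1$, not $0$, and all type-(b)/(c) children give $0$, so in fact $0$ \emph{does} appear, which would force the mex to be $\ge 1$. This tension signals that the real content is a more careful move enumeration: I expect that the correct reading is that from $(0,1,q,1)$ the "$t_1$–$b_1$" move is actually \emph{not available as a separating move} in the way described, or that splitting a $(0,1,q,1)$ position by a $t$–$y_j$ move yields one piece with a tip configuration that is terminal (nimber $0$) and one that is again $(0,1,q',1)$ with $q'<q$ (nimber $0$ by induction), so every child has nimber $0$ and hence the mex is $1$ — contradicting the claimed value $0$.

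\emph{The main obstacle}, therefore, is pinning down exactly which moves are legal from these degenerate starting positions under the girth-$4$ planarity constraint and exactly how each move decomposes the position into a disjunctive sum; the nimber bookkeeping is then routine via mex and XOR. Concretely, I would (1) carefully classify legal first moves from each position, checking the girth $\ge 4$ and simplicity constraints rule out loops, parallel edges, and triangle-closing curves; (2) for each legal move, identify the two circular-sprout summands it produces, using the face structure as in the decomposition already described in Section~\ref{sec circular}; (3) apply strong induction on $q$ (with (i)--(iii) as the genuine base inputs) to evaluate the summand nimbers; (4) take the XOR of each pair and then the mex over all children. The delicate point throughout is that a move at the "$1$"-tip spots $t$ or $b$ behaves differently from a move between the two "bulk" spots, and the proof must track which new tips land in which face so that the recursive structure $(0,1,q',1)$ is correctly reproduced.
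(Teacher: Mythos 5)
Your handling of (i)--(iii) is correct and essentially the same as the paper's: those positions are terminal or have only terminal children (same-spot joins give a $2$-cycle, and any second move after the forced first one would close a $2$- or $3$-cycle), so the nimbers $0,1,1$ follow from the mex rule exactly as you argue.

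For (iv), however, you have a genuine gap: you end without a proof, and the "resolutions" you float (that the $t_1$--$b_1$ move is somehow unavailable, or that every child has nimber $0$ so the mex should be $1$) are both wrong and would contradict the lemma. The missing idea is that in the {\sc Circular sprout} positions $CS_4(\mathcal{P}_4:\cdot,1,\cdot,1)$ the perimeter of the circle is not a mere fence: it consists of edges of the graph. These positions arise from a partially played $BS_2(\mathcal{P}_4:p,q)$, where the first two moves create the boundary $4$-cycle $x$--$t$--$y$--$b$ (the two spots carrying a single tip are crossbar vertices on that cycle), and the girth-$4$ constraint applies to the whole graph including these boundary edges. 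Hence a curve joining the tip at $t$ (or $b$) to a tip at the adjacent spot $y$ would, together with the boundary edge $ty$ (resp.\ $by$), create a $3$-cycle, which $\mathcal{P}_4$ forbids; so your moves of type (b)/(c) simply do not exist. The unique legal move from $(0,1,q,1)$ is $t_1$--$b_1$, and it decomposes the position into $CS_4(\mathcal{P}_4:0,0,1,0)$ plus $CS_4(\mathcal{P}_4:1,0,q,0)$, whose nimbers are $0$ and $1$ by (i) and (iii); the unique child therefore has nimber $0\oplus 1=1$, and $\eta[CS_4(\mathcal{P}_4:0,1,q,1)]=\operatorname{mex}\{1\}=0$, as claimed. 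Note that this boundary-edge observation is not optional: it is also what justifies the paper's later assertion that in $CS_4(\mathcal{P}_4:p,1,q,1)$ the only moves are $t_1$--$b_1$ and $x_i$--$y_j$, on which the whole induction of Theorem~\ref{th nimber} rests, so your classification of legal moves must be corrected before any of the subsequent bookkeeping can go through.
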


\begin{proof}
(i) Notice that the game $CS_4(\mathcal{P}_4:0, 0, q, 0)$ does not have any move, and thus trivially, 
\begin{equation}\label{eq nim 00q0}
    \eta[CS_4(\mathcal{P}_4:0, 0, q, 0)] = 0.
\end{equation}

\medskip

\noindent (ii) Observe that the game $CS_4(\mathcal{P}_4:0, 1, 0, 1)$ has exactly one move, that is, connecting $t_1$ and $b_1$ with a curve, and this move leaves the game in a position where no further moves are possible. Thus,  
\begin{equation}\label{eq nim 0101}
    \eta[CS_4(\mathcal{P}_4:0, 1, 0, 1)] = 1.
\end{equation}

\noindent (iii) Note  that the only possible move in the game $CS_4(\mathcal{P}_4:1, 0, q, 0)$ 
is to connect $x_1$ 
to some $y_{q'+1}$ after which the game will end.
Therefore, 
\begin{equation}\label{eq nim 10q0}
    \eta[CS_4(\mathcal{P}_4:1, 0, q, 0)]=1.
\end{equation}

\noindent (iv) In the game $CS_4(\mathcal{P}_4:0, 1, q, 1)$ for $q \geq 1$, the only possible first move is 
to connect $t_1$ with $b_1$ by a curve. This can be expressed as a sum of the games 
$CS_4(\mathcal{P}_4: 0, 0, 1, 0)$  and $CS_4(\mathcal{P}_4:1, 0, q, 0)$. We already know that 
$\eta[CS_4(\mathcal{P}_4:0, 0, q, 0)] = 0$ by equation~(\ref{eq nim 00q0}), and $\eta[CS_4(\mathcal{P}_4:1, 0, q, 0)]=1$ due to equation~\ref{eq nim 10q0}. 
Hence the only child of the game $CS_4(\mathcal{P}_4:0, 1, q, 1)$
has nimber 
\begin{equation}\label{eq nim 00q0+10q0}
    \eta[CS_4(\mathcal{P}_4:0, 0, q, 0)] \oplus \eta[CS_4(\mathcal{P}_4:1, 0, q, 0)]=1
\end{equation}
which implies 
\begin{equation}\label{eq nim 01q1}
   \eta[CS_4(\mathcal{P}_4:0, 1, q, 1)]=0.
\end{equation}

\medskip 

This completes the proof. 
\end{proof}

 This implies that the formula given in the statement of Theorem~\ref{th nimber} holds when $p=0$. 

\begin{lemma}
We have 
\begin{equation}
  \eta[CS_4(\mathcal{P}_4:1, 1, q, 1)]  = 
   \begin{cases}
   1 &\text{ if } q = 1,\\
   2 &\text{ if } q \geq 2.
    \end{cases}
\end{equation}
\end{lemma}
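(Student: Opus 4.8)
The plan is to evaluate $\eta\bigl[CS_4(\mathcal{P}_4:1,1,q,1)\bigr]$ directly as a minimum excludant, with no appeal to the induction on $p$: this statement serves as a second base case (after $p=0$) for the proof of Theorem~\ref{th nimber}, so it is enough to enumerate the positions reachable in one move and read off the $\mathrm{mex}$ of their nimbers, all of which are delivered by the preceding lemma. Specialising the move analysis of the basic set-up to $p=1$ (which forces $p'=0$), a first move of $CS_4(\mathcal{P}_4:1,1,q,1)$ is of one of two kinds: joining $t_1$ to $b_1$, producing the disjoint sum $CS_4(\mathcal{P}_4:1,0,1,0)+CS_4(\mathcal{P}_4:1,0,q,0)$; or joining $x_1$ to $y_{q'+1}$ for some $q'\in\{0,\dots,q-1\}$, producing $CS_4(\mathcal{P}_4:0,1,q',1)+CS_4(\mathcal{P}_4:0,1,q'',1)$ with $q''=q-1-q'$. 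I also have to keep track of the ``boundary-hugging'' moves that join $t_1$ or $b_1$ to a tip of $x$ or $y$ lying next to it on the circle; these are discussed in the last paragraph.

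The second step is to compute each successor's nimber from $\eta[Y+Z]=\eta[Y]\oplus\eta[Z]$ and the values in the preceding lemma: $\eta[CS_4(\mathcal{P}_4:1,0,m,0)]=1$ for $m\ge 1$, $\eta[CS_4(\mathcal{P}_4:0,1,0,1)]=1$, and $\eta[CS_4(\mathcal{P}_4:0,1,m,1)]=0$ for $m\ge 1$. A move of the first kind always gives nimber $1\oplus 1=0$. For a move of the second kind, a summand $CS_4(\mathcal{P}_4:0,1,r,1)$ contributes $1$ when $r=0$ and $0$ when $r\ge 1$, so the successor has nimber $0$ if $q'=q''=0$ (which happens only for $q=1$), nimber $1$ if exactly one of $q',q''$ vanishes (possible precisely when $q\ge 2$), and nimber $0$ if $q',q''\ge 1$ (possible when $q\ge 3$). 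Hence when $q=1$ all successors have nimber $0$ and $\eta=\mathrm{mex}\{0\}=1$; when $q\ge 2$ the set of successor nimbers is exactly $\{0,1\}$ — $0$ is attained by joining $t_1$ to $b_1$ and $1$ by joining $x_1$ to $y_1$ — so $\eta=\mathrm{mex}\{0,1\}=2$, as claimed.

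The genuinely delicate part, and the one the proof of Theorem~\ref{th nimber} must also shoulder, is verifying that this move list is complete: the omitted boundary-hugging moves must never yield a successor of nimber $\ge 2$ (and, when $q=1$, never one of nimber $1$, which would push the $\mathrm{mex}$ up to $2$). Such a move joins $t_1$ or $b_1$ to a tip of $x$ or $y$ lying next to it on the circle and cuts off a lune containing no live tip except a dead tip of the freshly created crossbar; after deleting that lune the successor is a circular-sprout position on at most three live spots, subject to at most the one or two forbidden chords that would close a triangle through the new crossbar (the girth-$4$ bookkeeping). A short self-contained $\mathrm{mex}$ computation on these shallow positions — peeling off one move at a time down to the trivial one-spot game — shows their nimbers are always $0$ or $1$, so they cannot disturb either $\mathrm{mex}$ value. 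Throughout, the obstacle is less the arithmetic than the discipline of checking that no legal first move, in particular none of these degenerate ones, has been overlooked.
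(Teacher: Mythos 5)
Your central computation is exactly the paper's: the two genuine first-move types (joining $t_1$ to $b_1$, whose successor has nimber $1\oplus 1=0$, and joining $x_1$ to $y_{q'+1}$, whose successor has nimber $1\oplus 0=1$ when exactly one of $q',q''$ vanishes and $0$ otherwise), followed by taking the mex, reproduce the paper's proof of this lemma. The problem is your final paragraph. The ``boundary-hugging'' moves you try to accommodate are not legal moves of $CS_4(\mathcal{P}_4:p,1,q,1)$ at all: in this game the bounding circle is part of the graph --- it records the curves already drawn in the ambient $BS_2(\mathcal{P}_4:p,q)$ position, namely the $4$-cycle through $x$, $t$, $y$, $b$ --- so joining $t_1$ (or $b_1$) to a tip of the neighbouring spot $x$ or $y$ would close a $3$-cycle through the freshly placed crossbar and the existing boundary edge, violating the triangle-free (girth $\geq 4$) condition defining $\mathcal{P}_4$; similarly, two tips of the same spot cannot be joined, since that creates a $2$-cycle. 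This is precisely why the paper's enumeration of exactly two move types is complete, and the correct way to dispose of these extra chords is to rule them out by the girth constraint, not to estimate the nimbers of positions they would create.

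Treating them instead as legal and asserting, without carrying out the computation, that ``their nimbers are always $0$ or $1$'' is a genuine gap, and the conclusion you draw from that assertion does not even follow: as you note one sentence earlier, a successor of nimber $1$ would already push the mex in the $q=1$ case from $1$ up to $2$, so ``always $0$ or $1$'' would not protect the claimed value $\eta[CS_4(\mathcal{P}_4:1,1,1,1)]=1$. Replace that paragraph by the observation that such chords are forbidden in $\mathcal{P}_4$ (equivalently, that they are exactly the moves excluded when the circular game is read off from a partially played $BS_2(\mathcal{P}_4:p,q)$), and your argument coincides with the paper's.
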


\begin{proof}
The game $CS_4(\mathcal{P}_4:1, 1, q, 1)$ has two types of children in the game tree. The first, obtained by connecting $t_1$ and $b_1$ with a curve, 
can be expressed as the sum of $CS_4(\mathcal{P}_4:1, 0, 1, 0)$ and 
$CS_4(\mathcal{P}_4:1, 0, q, 0)$. This game has nimber 
\begin{equation}\label{eq nim 1010+10q0}
    \eta[CS_4(\mathcal{P}_4:1, 0, 1, 0)] \oplus \eta[CS_4(\mathcal{P}_4:1, 0, q, 0)]=1 \oplus 1 = 0
\end{equation}
due to equation~(\ref{eq nim 0101}) and~(\ref{eq nim 10q0}).
The second child is obtained by connecting 
$x_1$ and $y_{q'+1}$, and thus can be expressed as a sum of  
$CS_4(\mathcal{P}_4:0, 1, q', 1)$ and $CS_4(\mathcal{P}_4:0, 1, q'', 1)$, and thus has 
nimber 
\begin{equation}\label{eq nim 01q'1+01q''1}
    \eta[CS_4(\mathcal{P}_4:0, 1, q', 1)] \oplus \eta[CS_4(\mathcal{P}_4:0, 1, q'', 1)]=0 \oplus 0 = 0
\end{equation}
for $q', q'' \geq 1$ due to equation~(\ref{eq nim 01q1}).
On the other hand, using equation~(\ref{eq nim 0101}) and~(\ref{eq nim 01q1}), 
when $q'=0$ (or $q'' = 0$) we have 
\begin{equation}\label{eq nim 0101+01(q-1)1}
    \eta[CS_4(\mathcal{P}_4:0, 1, 0, 1)] \oplus \eta[CS_4(\mathcal{P}_4:0, 1, q-1, 1)]=1 \oplus 0 = 1
\end{equation}
for $q \geq 2$. Notice that, if $q = 1$, then 
\begin{equation}\label{eq nim 0101+0101}
    \eta[CS_4(\mathcal{P}_4:0, 1, 0, 1)] \oplus \eta[CS_4(\mathcal{P}_4:0, 1, 0, 1)]=1 \oplus 1 = 0.
\end{equation}
That means, the game $CS_4(\mathcal{P}_4:1, 1, q, 1)$ has children with nimber $0$ 
and $1$ when $q \geq 2$. 
On the other hand, if $q=1$, 
it has only children with nimber $0$. Therefore, we have 
\begin{equation}
  \eta[CS_4(\mathcal{P}_4:1, 1, q, 1)]  = 
   \begin{cases}
   1 &\text{ if } q = 1,\\
   2 &\text{ if } q \geq 2.
    \end{cases}
\end{equation}

This completes the proof. 
\end{proof}

Thus, the formula given in the statement is true for $p=1$ as well. 
Hence we have verified the formula to be true for $p \leq 1$, and this is the base case of our induction.

\subsubsection{Induction hypothesis}
Let us assume that the formula is true for all   
$\eta[CS_4(\mathcal{P}_4:p-i, 1, q, 1)]$, where $i \in \{1, 2, \cdots, p\}$. For the induction step, we need to show that it is true for $\eta[CS_4(\mathcal{P}_4:p, 1, q, 1)]$ as well.

\subsubsection{Induction step: A key lemma}
We will break our induction step of the proof across several lemmas. 
However, before that we will make a general observation useful for all cases. 

\begin{lemma}\label{lem p1q1 geq 1}
For all $0 \leq p \leq q$,
\begin{equation}\label{eq p1q1 geq 1}
    \eta[CS_4(\mathcal{P}_4:p, 1, q, 1)] \geq 1. 
\end{equation}
\end{lemma}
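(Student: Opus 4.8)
The plan is to exhibit, from the position $CS_4(\mathcal{P}_4:p,1,q,1)$, a single legal move whose resulting position has nimber $0$; since a position's nimber is the minimum excludant of the nimbers of its options, one option of nimber $0$ already forces $\eta[CS_4(\mathcal{P}_4:p,1,q,1)] \ge 1$ (equivalently, the first player has a winning strategy). The game is never terminal here --- the spots $t$ and $b$ each carry an open tip --- so such an option genuinely has to be produced rather than assumed.

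The move I would use is the ``first type'' described in the basic set-up: connect $t_1$ to $b_1$ by a curve drawn inside the reference circle and place a crossbar on it. This move is always legal: before it, $t$ and $b$ lie in distinct connected components, so the added path of length two creates no cycle at all, in particular no triangle, and the graph stays in $\mathcal{P}_4$; moreover the curve is clearly drawable inside the disk without crossings. As recorded in the set-up, this move splits the game into the disjoint sum $CS_4(\mathcal{P}_4:p,0,1,0) + CS_4(\mathcal{P}_4:1,0,q,0)$, so its nimber is the XOR of the nimbers of the two summands.

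It then remains to evaluate those two nimbers for $p \ge 1$ (so that $q \ge p \ge 1$ as well). The summand $CS_4(\mathcal{P}_4:1,0,q,0)$ has nimber $1$ by equation~(\ref{eq nim 10q0}). For the summand $CS_4(\mathcal{P}_4:p,0,1,0)$, rotating the four-entry parameter list by two positions identifies it with $CS_4(\mathcal{P}_4:1,0,p,0)$, so equation~(\ref{eq nim 10q0}) applies again --- now with the roles of the parameters swapped --- and gives nimber $1$. Hence the chosen option has nimber $1 \oplus 1 = 0$, which proves $\eta[CS_4(\mathcal{P}_4:p,1,q,1)] \ge 1$ for every $p \ge 1$, i.e.\ in particular in the range needed for the induction step; the small cases $p \le 1$ have already been settled in the base case.

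I do not anticipate any real difficulty: this is a preparatory observation, and in particular it uses neither the induction hypothesis nor Theorem~\ref{th nimber}. The only items worth a sentence of care are that the $t_1$--$b_1$ move is legal for all parameter values and decomposes the game exactly as claimed (which is precisely the content of the basic set-up), and that the base-case value $\eta[CS_4(\mathcal{P}_4:1,0,q,0)]=1$ is invoked in the correct coordinate, which is where the cyclic symmetry of the four-spot parameter list is used.
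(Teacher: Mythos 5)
Your proof is essentially identical to the paper's: the same $t_1$--$b_1$ move, the same decomposition into the sum of $CS_4(\mathcal{P}_4:p,0,1,0)$ and $CS_4(\mathcal{P}_4:1,0,q,0)$, and the same computation $1\oplus 1=0$ via equation~(\ref{eq nim 10q0}) (plus the symmetry remark identifying $CS_4(\mathcal{P}_4:p,0,1,0)$ with $CS_4(\mathcal{P}_4:1,0,p,0)$, which the paper leaves implicit). One caveat: for $p=0$, $q\geq 1$ the first summand has nimber $0$ by~(\ref{eq nim 00q0}), so this child has nimber $1$, and indeed~(\ref{eq nim 01q1}) gives $\eta[CS_4(\mathcal{P}_4:0,1,q,1)]=0$; hence the lemma as literally stated (for all $0\leq p\leq q$) holds only for $p\geq 1$ (and $p=q=0$), so your explicit restriction to $p\geq 1$ is actually more careful than the paper's own proof, which applies~(\ref{eq nim 10q0}) without excluding $p=0$ --- just note that your phrase ``$p\leq 1$ is settled in the base case'' should be read as the base case showing the inequality is not needed (and in fact fails) at $p=0$, $q\geq 1$, rather than as confirming it there.
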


\begin{proof}
Notice that each of the games $CS_4(\mathcal{P}_4:p, 1, q, 1)$ has one child which is expressed as a sum of $CS_4(\mathcal{P}_4:p, 0, 1, 0)$ and $CS_4(\mathcal{P}_4:1, 0, q, 0)$. By equation~\ref{eq nim 10q0}, this game
has nimber 
\begin{equation}\label{eq nim 0101+0101}
    \eta[CS_4(\mathcal{P}_4:p,0,1,0)] \oplus \eta[CS_4(\mathcal{P}_4:1,0,q,0)]=1 \oplus 1 = 0.
\end{equation}
Therefore, all the games of the type $CS_4(\mathcal{P}_4:p, 1, q, 1)$ has a child with nimber $0$. This implies 
\begin{equation}\label{eq p1q1 geq 1}
    \eta[CS_4(\mathcal{P}_4:p, 1, q, 1)] \geq 1. 
\end{equation}
Hence the proof. 
\end{proof}

\subsubsection{Induction step: When $p=q$.}
Now we are ready to handle the case where $p = q$.

\begin{lemma}
If $\eta[CS_4(\mathcal{P}_4:n, 1, q, 1)]$ satisfy the formula 
given in the statement of Theorem~\ref{th nimber} for all 
$n \leq p-1$, then $\eta[CS_4(\mathcal{P}_4:p, 1, p, 1)] = 1$.
\end{lemma}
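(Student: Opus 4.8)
The plan is to show that $\eta[CS_4(\mathcal{P}_4:p,1,p,1)] = 1$ by checking that the game has a child with nimber $0$ (so its nimber is at least $1$ by Lemma~\ref{lem p1q1 geq 1}, which we may invoke directly) and that it has \emph{no} child with nimber $1$ (so its nimber is exactly $1$). The lower bound $\eta \ge 1$ is already in hand, so the entire burden is the upper bound: every child must have nimber different from $1$.

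First I would enumerate the children of $CS_4(\mathcal{P}_4:p,1,p,1)$ according to the two move types described in the basic set-up. A type-one move (joining $t_1$ to $b_1$) produces the single child $CS_4(\mathcal{P}_4:p,0,1,0) + CS_4(\mathcal{P}_4:1,0,p,0)$, whose nimber is $1\oplus 1 = 0$ by equation~(\ref{eq nim 10q0}); this is the child witnessing $\eta \ge 1$. A type-two move (joining $x_{p'+1}$ to $y_{q'+1}$) produces a child of the form $CS_4(\mathcal{P}_4:p',1,q',1) + CS_4(\mathcal{P}_4:p'',1,q'',1)$ with $p'+p'' = p-1$ and $q'+q'' = p-1$. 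So I must show that for every admissible split, $\eta[CS_4(\mathcal{P}_4:p',1,q',1)] \oplus \eta[CS_4(\mathcal{P}_4:p'',1,q'',1)] \ne 1$. Since $p',p'',q',q'' \le p-1$, all four nimbers on the right are governed by the induction hypothesis (the formula of Theorem~\ref{th nimber}), so this becomes a purely arithmetic statement about the closed-form expression.

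The main obstacle — and where I would spend most of the effort — is the case analysis verifying $\eta[CS_4(\mathcal{P}_4:p',1,q',1)] \oplus \eta[CS_4(\mathcal{P}_4:p'',1,q'',1)] \ne 1$ over all splits with $p'+p''=q'+q''=p-1$. The key structural observation to exploit is that the two summands are ``complementary'': $p''=(p-1)-p'$ and $q''=(p-1)-q'$. One would first dispose of the balanced-type sub-summands (where one of the parts is itself of the form $(m,1,m,1)$, contributing nimber $1$), then handle the sub-case where both parts lie in the ``linear'' regime $p'<q'<2p'-\lfloor\frac{|p'-2|}{2}\rfloor$ using the explicit formula $\frac{4}{5}(p'+q'-i)+2\lfloor i/4\rfloor$, and finally the sub-case where at least one part is in the saturated regime $q'\ge 2p'-\lfloor\cdots\rfloor$ giving nimber $2p'$. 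In each regime I would track the value modulo $2$ (to rule out the XOR equalling the odd number $1$) and, where the parities coincide, dig into the $5$-periodic and $4$-periodic structure of the formula to exclude the exact value $1$. It is worth isolating, as a small auxiliary claim, the fact that $\eta[CS_4(\mathcal{P}_4:a,1,b,1)]$ is even whenever $a \ne b$ and $a,b\ge 1$ except in controlled small cases — this would make most splits fall immediately, leaving only a bounded list of exceptional configurations to check by hand.

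Finally, having shown no child has nimber $1$ while some child has nimber $0$, I conclude $\mathrm{mex}$ of the children's nimbers is $1$, i.e. $\eta[CS_4(\mathcal{P}_4:p,1,p,1)] = 1$, which is exactly the $p=q$ branch of the formula and completes this lemma. I expect the write-up to lean on one or two reusable parity/periodicity sub-lemmas so that the induction step for the later cases ($p<q$) can reuse the same bookkeeping.
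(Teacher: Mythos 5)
Your proposal is correct and follows essentially the same route as the paper: the lower bound comes from the $t_1$--$b_1$ child of nimber $0$ (Lemma~\ref{lem p1q1 geq 1}), and the upper bound from checking that every split child has nimber $\neq 1$, using that the inductive formula is even whenever the two tip-counts differ and that, since $p=q$, the complementarity $p''=p-1-p'$, $q''=p-1-q'$ forces $p'=q'\iff p''=q''$, so the XOR is either even or $1\oplus 1=0$. The extra ``$5$-periodic and $4$-periodic'' digging you anticipate is not needed -- parity of the closed form plus this complementarity already closes the argument, exactly as in the paper.
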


\begin{proof}
 When $p = q$, note that because of Lemma~\ref{lem p1q1 geq 1} 
 it is enough to show that none of the children of $CS_4(\mathcal{P}_4:p, 1, q, 1)$ in the game tree has nimber equal to $1$. 
That is, we need to show that 
$$\eta[CS_4(\mathcal{P}_4:p', 1, q', 1)] \oplus \eta[CS_4(\mathcal{P}_4: p'', 1, q'', 1)] \neq 1.$$
Notice that, by our induction hypothesis, 
both $\eta[CS_4(\mathcal{P}_4:p', 1, q', 1)]$
and $\eta[CS_4(\mathcal{P}_4: p'', 1, q'', 1)]$ has even values unless $p'=q'$, and hence, as the XOR of two even numbers is even, we are done unless $p'=q'$. 
If $p' = q'$, then both 
$\eta[CS_4(\mathcal{P}_4:p', 1, q', 1)]$ 
and $\eta[CS_4(\mathcal{P}_4: p'', 1, q'', 1)]$ are equal to $1$. Thus, in this case also, 
$$\eta[CS_4(\mathcal{P}_4:p', 1, p', 1)] \oplus \eta[CS_4(\mathcal{P}_4: p'', 1, p'', 1)] = 1 \oplus 1 = 0.$$
This implies that 
$$\eta[CS_4(\mathcal{P}_4:p, 1, p, 1)] = 1$$
and concludes the proof.
\end{proof}

\subsubsection{Induction step: When $q \geq 2p - \lfloor \frac{p-2}{2} \rfloor$.}
\begin{lemma}
If $\eta[CS_4(\mathcal{P}_4:n, 1, q, 1)]$ satisfies the formula 
given in the statement of Theorem~\ref{th nimber} for all 
$n \leq p-1$, then $\eta[CS_4(\mathcal{P}_4:p, 1, q, 1)] = 2p$
when $q \geq 2p - \lfloor \frac{p-2}{2} \rfloor$.
\end{lemma}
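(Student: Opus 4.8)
The plan is to compute $\eta[CS_4(\mathcal{P}_4:p,1,q,1)]$ as the minimum excludant of the nimbers of its children, so two things must be shown: that $2p$ is \emph{not} the nimber of any child, and that every value in $\{0,1,\dots,2p-1\}$ \emph{is} the nimber of some child. Recall from the basic set-up that the children of $CS_4(\mathcal{P}_4:p,1,q,1)$ are the single ``$t$--$b$'' child $CS_4(\mathcal{P}_4:p,0,1,0)\oplus CS_4(\mathcal{P}_4:1,0,q,0)$, of nimber $1\oplus 1=0$, together with the ``$x_{p'+1}$--$y_{q'+1}$'' children $CS_4(\mathcal{P}_4:p',1,q',1)\oplus CS_4(\mathcal{P}_4:p'',1,q'',1)$ with $p''=p-1-p'$ and $q''=q-1-q'$ over all legal pairs. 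In each summand the first parameter is at most $p-1$, so the induction hypothesis supplies its nimber from the formula of Theorem~\ref{th nimber}, and the child's nimber is the XOR of the two.

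First I would record, straight from the formula, the auxiliary facts that for $a\le b$ one has $\eta[CS_4(\mathcal{P}_4:a,1,b,1)]\le\max(2a,1)$, that this value equals $1$ exactly when $a=b$ and is even otherwise, and --- crucially --- that since $q\ge 2p-\lfloor\frac{p-2}{2}\rfloor$, a short floor computation (essentially $\lfloor\frac{p-2}{2}\rfloor-\lfloor\frac{p-3}{2}\rfloor\le 1$) puts $CS_4(\mathcal{P}_4:p-1,1,q-1,1)$ into the ``$q$ large'' branch, so its nimber is $2(p-1)$. For the excludant half: the ``$t$--$b$'' child has nimber $0\neq 2p$; a split child whose two summands both have positive first parameter has nimber at most $2p'+2p''=2(p-1)<2p$, because XOR never exceeds the sum; and a split child with a summand $CS_4(\mathcal{P}_4:0,1,q',1)$ --- contributing $0$ if $q'\ge 1$ and $1$ if $q'=0$ --- pairs it with $CS_4(\mathcal{P}_4:p-1,1,q'',1)$ of nimber at most $2(p-1)$, the worst case being $1\oplus 2(p-1)=2p-1\neq 2p$. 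Hence no child realizes $2p$.

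For the ``every value below $2p$ occurs'' half I would exhibit, for each target, an explicit legal move. The value $0$ is the ``$t$--$b$'' child. For an odd value $2m+1$ with $0\le m\le p-1$, joining $x_{p-m}$ to $y_{p-m}$ splits the game as $CS_4(\mathcal{P}_4:p-1-m,1,p-1-m,1)\oplus CS_4(\mathcal{P}_4:m,1,q-p+m,1)$ of nimber $1\oplus 2m=2m+1$, once a floor inequality guaranteeing $q-p+m$ large enough is checked. For an even value $2m$ with $1\le m\le p-2$, joining $x_{m+1}$ to $y_q$ splits as $CS_4(\mathcal{P}_4:m,1,q-1,1)\oplus CS_4(\mathcal{P}_4:p-1-m,1,0,1)$ of nimber $2m\oplus 0=2m$. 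Finally the value $2(p-1)$ comes from joining $x_1$ to $y_2$, giving $CS_4(\mathcal{P}_4:0,1,1,1)\oplus CS_4(\mathcal{P}_4:p-1,1,q-2,1)$ of nimber $0\oplus 2(p-1)$ --- except at the unique boundary value $q=2p-\lfloor\frac{p-2}{2}\rfloor$ with $p$ even, where $q-2$ falls one short of the threshold and I would instead join $x_{p-1}$ to $y_{(3p-2)/2}$, producing $CS_4(\mathcal{P}_4:p-2,1,(3p-4)/2,1)\oplus CS_4(\mathcal{P}_4:1,1,2,1)$ of nimber $2(p-2)\oplus 2=2(p-1)$. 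Combining both halves gives $\eta[CS_4(\mathcal{P}_4:p,1,q,1)]=2p$.

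The genuine but routine obstacle is the bookkeeping. One must check that every witness move is legal --- the tip indices stay in range and the resulting two-summand decomposition is exactly the claimed one, which relies on the description of moves from the basic set-up --- and that every second parameter appearing in a witness is at least the threshold $2r-\lfloor\frac{|r-2|}{2}\rfloor$ needed to land in the ``$q$ large'' branch of the formula for the relevant $r$ (and similarly for the subgames appearing in the excludant half). Each such check collapses to an inequality between sums of floor functions of linear expressions in $p$, $q$, $m$; these are elementary, but the slack is exactly zero at $q=2p-\lfloor\frac{p-2}{2}\rfloor$, which is why that boundary value needs its own witness for $2(p-1)$. I would package these verifications as a few short sub-claims so that the main line of the induction step stays transparent.
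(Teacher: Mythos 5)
Your proposal is correct and follows essentially the same route as the paper: strong induction with a mex computation, realizing every odd value below $2p$ by a diagonal split $CS_4(\mathcal{P}_4:p',1,p',1)\oplus CS_4(\mathcal{P}_4:p'',1,q'',1)$, every even value by a split whose other summand has nimber $0$, and excluding $2p$ via the bound $\eta[CS_4(\mathcal{P}_4:a,1,b,1)]\le\max(2\min(a,b),1)$ together with the fact that XOR never exceeds the sum. The one point where you go beyond the paper is the value $2(p-1)$: the paper's even-value family $CS_4(\mathcal{P}_4:p',1,q-1,1)\oplus CS_4(\mathcal{P}_4:p-1-p',1,0,1)$ only produces $2p'$ for $p'\le p-2$ (at $p'=p-1$ the second summand is $CS_4(\mathcal{P}_4:0,1,0,1)$, of nimber $1$), so your separate witnesses for $2(p-1)$, including the special move at the boundary $q=2p-\lfloor\frac{p-2}{2}\rfloor$ with $p$ even, actually repair a small oversight in the paper's argument rather than duplicate it.
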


\begin{proof}  When $q \geq 2p - \lfloor \frac{p-2}{2} \rfloor$, we need to show that none of the children of $CS_4(\mathcal{P}_4:p, 1, q, 1)$ in the game tree 
has nimber equal to $2p$, where as, for each $i \in \{1,2, \cdots, 2p-1\}$, there exists a child of $CS_4(\mathcal{P}_4:p, 1, q, 1)$ in the game tree which has nimber equal to $i$. 

First, we will show that the odd numbers less than $2p$ appear as the nimbers of the
children of $CS_4(\mathcal{P}_4:p, 1, q, 1)$ in the game tree. 
For that, let us consider the children that can be expressed as sum of the games 
$CS_4(\mathcal{P}_4:p', 1, q', 1)$ and $CS_4(\mathcal{P}_4: p'', 1, q'', 1)$, 
where $q \geq 2p - \lfloor \frac{p-2}{2} \rfloor$ and $p' = q'$. 
In this scenario, note that we have 
$$\eta[CS_4(\mathcal{P}_4:p', 1, q', 1)] = \eta[CS_4(\mathcal{P}_4:p', 1, p', 1)] = 1$$
by induction hypothesis. 
Moreover, observe that 
\begin{align*}
     q'' = q-1-q' &\geq \left(2p - \lfloor \frac{p-2}{2} \rfloor \right) - 1 - p' \\
     &\geq  \left(2p - \lfloor \frac{p-2}{2} \rfloor -1 -p'\right) + (p' - p') + \left(\lfloor \frac{p''-2}{2} \rfloor - \lfloor \frac{p''-2}{2}\rfloor \right ) + (1 - 1)\\
 &\geq \left(2(p-p'-1) - \lfloor \frac{p''-2}{2} \rfloor\right) + \left(p' + \lfloor \frac{p''-2}{2} \rfloor - \lfloor \frac{p-2}{2} \rfloor +1 \right)\\
&\geq 2p'' - \lfloor \frac{p''-2}{2}  \rfloor.
\end{align*}
This implies 
$$\eta[CS_4(\mathcal{P}_4: p'', 1, q'', 1)] = 2p''$$ due to the induction hypothesis. 
Therefore, 
$$\eta[CS_4(\mathcal{P}_4:p', 1, p', 1)] \oplus \eta[CS_4(\mathcal{P}_4: p'', 1, q'', 1)] = 1 \oplus 2p'' =2p''+1$$ 
is the nimber of a child of $CS_4(\mathcal{P}_4:p, 1, q, 1)$ in the game tree. 
As $p'' \in \{0, 1, \cdots, p-1\}$, the above covers all odd numbers less than $2p$. 

Next, we will show that the even numbers less than $2p$ appear as the nimbers of the
children of $CS_4(\mathcal{P}_4:p, 1, q, 1)$ in the game tree. 
For that, let us consider the children that can be expressed as the sum of the games 
$CS_4(\mathcal{P}_4:p', 1, q-1, 1)$ and $CS_4(\mathcal{P}_4: p'', 1, 0, 1)$.
In this scenario, note that we have 
$$\eta[CS_4(\mathcal{P}_4: p'', 1, 0, 1)] = 0$$
by equation~(\ref{eq nim 01q1}). 
Also 
\begin{align*}
     q-1  &\geq (2p - \lfloor \frac{p-2}{2} \rfloor) - 1 \\ 
     &\geq  \left(2p - \lfloor \frac{p-2}{2} \rfloor - 1 \right) + (2p' - 2p') + \left(\lfloor \frac{p'-2}{2} \rfloor - \lfloor \frac{p'-2}{2} \rfloor \right)\\
 &\geq  (2p' - \lfloor \frac{p'-2}{2} \rfloor) + \left(2p - 2p' + \lfloor \frac{p'-2}{2} \rfloor - \lfloor \frac{p-2}{2} \rfloor -1 \right)\\
&\geq 2p' - \lfloor \frac{p'-2}{2}  \rfloor.
\end{align*}
This implies 
$$\eta[CS_4(\mathcal{P}_4:p', 1, q-1, 1)] = 2p'$$ due to the induction hypothesis. 
Therefore, 
$$\eta[CS_4(\mathcal{P}_4:p', 1, q-1, 1)] \oplus \eta[CS_4(\mathcal{P}_4: p'', 1, 0, 1)] = 2p' \oplus 0 = 2p'$$
is the nimber of a child of $CS_4(\mathcal{P}_4:p, 1, q, 1)$ in the game tree. 
As $p' \in \{0, 1, \cdots, p-1\}$, the above covers all even numbers less than $2p$.

Now let us show that there is no child of $CS_4(\mathcal{P}_4:p, 1, q, 1)$ in the game tree whose nimber equals $2p$. Notice that, 
in general 
$$\eta[CS_4(\mathcal{P}_4:p', 1, q', 1)] \leq 2p'$$ due to the induction hypothesis. 
Therefore, 
$$\eta[CS_4(\mathcal{P}_4:p', 1, q', 1)] \oplus \eta[CS_4(\mathcal{P}_4: p'', 1, q'', 1)] \leq 2p' + 2p'' = 2(p' + p-p'-1)  < 2p.$$
Hence we are done. 
\end{proof}

\subsubsection{Induction step: When $p < q < 2p - \lfloor \frac{p-2}{2} \rfloor$.} 
  When $p < q < 2p - \lfloor \frac{p-2}{2} \rfloor$, 
  proving the induction step needs careful treatment. 
  Due to equation~(\ref{eq p1q1 geq 1}), we know that 
$$\eta[CS_4(\mathcal{P}_4:p, 1, q, 1)] \geq 1.$$
Now to prove this case, first we need to show that 
$$\eta[CS_4(\mathcal{P}_4:p', 1, q', 1)] \oplus \eta[CS_4(\mathcal{P}_4: p'', 1, q'', 1)] \neq \frac{4}{5} (p+q-i)+2 \lfloor\frac{i}{4}\rfloor,$$ 
where $i \equiv p+q (mod\ 5)$   
and  $1,2,3,4,5$ are the representative of the integers modulo $5$.

\begin{lemma}
If $\eta[CS_4(\mathcal{P}_4:n, 1, q, 1)]$ satisfies the formula 
given in the statement of Theorem~\ref{th nimber} for all 
$n \leq p-1$, then 
$$\eta[CS_4(\mathcal{P}_4:p', 1, q', 1)] \oplus \eta[CS_4(\mathcal{P}_4: p'', 1, q'', 1)] \neq 
\frac{4}{5} (p+q-i)+2 \lfloor\frac{i}{4}\rfloor,$$ 
where $p < q < 2p - \lfloor \frac{p-2}{2} \rfloor$, 
$i \equiv p+q (mod\ 5)$,   
and  $1,2,3,4,5$ are the representative of the integers modulo $5$. 
\end{lemma}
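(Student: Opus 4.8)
The plan is to bound, from above, the nimber of every child of $CS_4(\mathcal{P}_4:p,1,q,1)$ that is obtained by the ``$x$-$y$'' type move, i.e., each child expressible as $CS_4(\mathcal{P}_4:p',1,q',1) + CS_4(\mathcal{P}_4:p'',1,q'',1)$ with $p'+p''=p-1$ and $q'+q''=q-1$, and then show that this bound is strictly less than the target value $N(p,q) := \frac{4}{5}(p+q-i)+2\lfloor i/4\rfloor$ where $i\equiv p+q\pmod 5$. (The ``$t$-$b$'' child has nimber $0$ by Lemma~\ref{lem p1q1 geq 1}'s computation, so it is irrelevant here.) Concretely, I would first record the elementary fact that in the regime $p<q<2p-\lfloor\frac{p-2}{2}\rfloor$ the formula gives $N(p,q)\approx\frac{4}{5}(p+q)$, whereas each summand $\eta[CS_4(\mathcal{P}_4:p',1,q',1)]$ is at most $\max\{1,\ \frac{4}{5}(p'+q')+2,\ 2p'\}$ by the induction hypothesis — in all three branches this is bounded by roughly $\frac{4}{5}(p'+q')+O(1)$ when $(p',q')$ is in the ``middle'' range, and by $2p'$ in the ``saturated'' range.

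The key arithmetic step is then to bound $\eta[Y']\oplus\eta[Y'']$ where $Y'=CS_4(\mathcal{P}_4:p',1,q',1)$ and $Y''=CS_4(\mathcal{P}_4:p'',1,q'',1)$. I would use the crude but sufficient inequality $a\oplus b\le a+b$ together with a more careful case split: (a) if both children lie in the middle range, then $\eta[Y']+\eta[Y'']\le \frac{4}{5}(p'+q'+p''+q'')+4 = \frac{4}{5}(p+q-2)+4 < \frac{4}{5}(p+q)$, which will be $\le N(p,q)-1$ after checking the floor/residue corrections; (b) if one child is saturated, say $q''\ge 2p''-\lfloor\frac{p''-2}{2}\rfloor$ so $\eta[Y'']=2p''$, I must show $q'$ is then forced to be small relative to $p'$ (since $q'+q''=q-1$ and $q<2p-\lfloor\frac{p-2}{2}\rfloor$ limits how large $q''$ can be, hence constrains $q'$), and combine $2p''$ with the bound on $\eta[Y']$; (c) the equality case $p'=q'$, where $\eta[Y']=1$, is the easiest and handled by $1\oplus\eta[Y'']\le\eta[Y'']+1$. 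In each branch the inequality to verify is a finite linear inequality in $p,q,p',q'$ together with floor terms, which I would clear by writing $p=5a+r$, $p'=5a'+r'$ etc.\ and checking the $O(1)$ possibilities for the residues.

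The subtle part — and what I expect to be the main obstacle — is case (b), the interaction with the saturated branch $\eta=2p''$. Because $2p''$ can be comparatively large (up to $2(p-1)$), the naive bound $a\oplus b\le a+b$ is too lossy: one really needs that when $Y''$ is saturated, the other component $q'$ is small enough that $\eta[Y']$ contributes only $O(1)$ beyond $2p''$, \emph{and} that $2p''+O(1)$ still misses $N(p,q)\approx\frac{4}{5}(p+q)$. This forces me to exploit the precise threshold $q<2p-\lfloor\frac{p-2}{2}\rfloor$: from $q''=q-1-q'$ and the saturation condition $q''\ge 2p''-\lfloor\frac{p''-2}{2}\rfloor$ one derives an upper bound on $q'$ in terms of $p,p',q$, which feeds back into the induction-hypothesis value of $\eta[Y']$. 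I would set up this chain of inequalities explicitly (mirroring the displayed four-line estimates in the preceding lemma, but run in the opposite direction), and only then invoke $a\oplus b\le a+b$. A secondary nuisance is that $\oplus$ is not monotone, so in the rare situation where the summands are genuinely close to $N(p,q)$ I may need the sharper observation that $a\oplus b = N$ with $a,b<N$ forces the binary representations of $a$ and $b$ to agree on the leading bit of $N$; ruling that out is a short but fiddly bit-counting argument that I would isolate as a sub-claim. Once all children are shown to have nimber $\ne N(p,q)$, the companion lemma (which I expect follows in the paper) supplies the matching ``all smaller values are achieved'' direction, completing the $\mathrm{mex}$ computation.
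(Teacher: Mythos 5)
Your overall skeleton does match the paper's proof: split the children $CS_4(\mathcal{P}_4:p',1,q',1)+CS_4(\mathcal{P}_4:p'',1,q'',1)$ according to which branch of the induction formula each summand falls in, bound the XOR by the sum, and in the saturated case use the threshold $q<2p-\lfloor\frac{p-2}{2}\rfloor$ together with $q'+q''=q-1$ to push the other summand out of the saturated branch (this is exactly the paper's chain of inequalities ending in $q''<2p''-\lfloor\frac{p''-2}{2}\rfloor$), followed by a mod-$5$ residue computation. But there are two concrete gaps. First, your case (c), $p'=q'$, cannot be settled by the additive bound $1\oplus\eta[CS_4(\mathcal{P}_4:p'',1,q'',1)]\le \eta[CS_4(\mathcal{P}_4:p'',1,q'',1)]+1$: the second summand can equal the target value $N(p,q)$ itself, so this bound only yields ``at most $N+1$'' and cannot exclude equality with $N$. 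For instance $p=6$, $q=7$ gives $i=3$ and $N=8$, while the child with $p'=q'=0$ decomposes as $CS_4(\mathcal{P}_4:0,1,0,1)+CS_4(\mathcal{P}_4:5,1,6,1)$ and has nimber $1\oplus 8=9>N$; no upper-bound argument can handle this child. What closes the case (and what the paper uses) is parity: by the induction hypothesis every value in the other two branches of the formula is even, so this child's nimber is odd, whereas $N(p,q)$ is even.

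Second, the headline inequality in your case (a) is false as written: $\frac{4}{5}(p+q-2)+4=\frac{4}{5}(p+q)+\frac{12}{5}>\frac{4}{5}(p+q)$, so the crude ``$+4$ slack'' estimate does not place the sum below $N(p,q)$. The case only closes after the residue bookkeeping you defer: with representatives in $\{1,\dots,5\}$ and $p'+q'+p''+q''=p+q-2$, one has $i'+i''\in\{i+3,\,i+8\}$, and the resulting gain of $\frac{4}{5}\cdot 5=4$ is precisely what makes the sum of the two formula values at most $N(p,q)-2$ in every residue combination; this is the entire content of the paper's comparison table, so the ``floor/residue corrections'' are the heart of this case rather than an afterthought. (A minor point: your bit-counting sub-claim is misstated --- $a\oplus b=N$ forces $a$ and $b$ to \emph{differ}, not agree, on the leading bit of $N$ --- but with the parity fix and the residue table it is not needed at all.) With these two repairs, your argument coincides with the paper's.
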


\begin{proof} 
Without loss of generality, let us assume $q' \geq p'$. 
When $p'=q'$, we have 
$$\eta[CS_4(\mathcal{P}_4:p', 1, q', 1)] \oplus \eta[CS_4(\mathcal{P}_4: p'', 1, q'', 1)] \neq \frac{4}{5} (p+q-i)+2 \lfloor\frac{i}{4}\rfloor$$
as the left-hand side is of the form  $1 \oplus x$, where $x$ is an even number. That is, the left-hand side is an odd number while the right-hand side is an even number.

Next suppose that $p' < q' < 2p' - \lfloor \frac{p'-2}{2} \rfloor$ and 
$p'' < q'' < 2p'' - \lfloor \frac{p''-2}{2} \rfloor$ 
(or $q'' < p'' < 2q'' - \lfloor \frac{q''-2}{2} \rfloor$). Moreover, let 
$i \equiv (p+q) \pmod 5$, $i' \equiv (p'+q') \pmod 5$, and 
$i'' \equiv (p''+q'') \pmod 5$. In this scenario, $i'$ and $i''$
can have some specific values depending on $i$. If we know these values, then it is possible to compute and compare the terms and show that 
$$\eta[CS_4(\mathcal{P}_4:p', 1, q', 1)] \oplus \eta[CS_4(\mathcal{P}_4: p'', 1, q'', 1)] < \frac{4}{5} (p+q-i)+2 \lfloor\frac{i}{4}\rfloor.$$
We will demonstrate a sample calculation for one of the cases, and omit the detailed calculations for the others. 
However, we will summarize them in a consolidated manner in the comparison table (Table~\ref{tab all cases}). 

\medskip

\noindent \textit{A sample calculation:} If $i = 4$, the possible values for $\{i', i''\}$ are $\{5,2\}$, $\{1,1\}$, and $\{3,4\}$. 
When  $\{i', i''\} = \{5,2\}$ we have
\begin{align*}
    \eta[CS_4(\mathcal{P}_4:p', 1, q', 1)] \oplus \eta[CS_4(\mathcal{P}_4: p'', 1, q'', 1)] &= \frac{4}{5} (p+q - 2 - 5 - 2) + 2 \\
    &= \frac{4}{5} (p+q+1) - 6 \\
    &\neq \frac{4}{5} (p+q+1) - 2.
\end{align*}


\begin{table}[ht]
    \centering
   \begin{tabular}{ |c|c|c|c| } 

 \hline
 
  &  &  & \\ 
  $i$ & $\{i',i''\}$ & $\eta[CS_4(\mathcal{P}_4:p', 1, q', 1)] \oplus \eta[CS_4(\mathcal{P}_4:p'', 1, q'', 1)]$ & $\frac{4}{5} (p+q-i)+2 \lfloor\frac{i}{4}\rfloor$\\ 
    &  & & \\ 
 
 \hline

  & $\{5,3\}$ & $\frac{4}{5}(p+q)-6$ & \\ 
 $5$ & $\{1,2\}$ & $\frac{4}{5}(p+q)-4$ & $\frac{4}{5}(p+q)-2$\\
 & $\{4,4\}$ & $\frac{4}{5}(p+q)-4$ & \\
 
 \hline
 
  & $\{5,2\}$ & $\frac{4}{5}(p+q+1)-6$ & \\
 $4$ & $\{1,1\}$ & $\frac{4}{5}(p+q+1)-4$ & $\frac{4}{5}(p+q+1)-2$ \\ 
  & $\{3,4\}$ & $\frac{4}{5}(p+q+1)-6$ & \\
  
 \hline
 & $\{5,1\}$ & $\frac{4}{5}(p+q+2)-6$ & \\
 $3$ & $\{2,4\}$ & $\frac{4}{5}(p+q+2)-6$ & $\frac{4}{5}(p+q+2)-4$\\ 
  & $\{3,3\}$ & $\frac{4}{5}(p+q+2)-8$ & \\
  
\hline

 & $\{5,5\}$ & $\frac{4}{5}(p+q+3)-8$ & \\
 $2$ & $\{4,1\}$ & $\frac{4}{5}(p+q+3)-6$ & $\frac{4}{5}(p+q+3)-4$ \\ 
  & $\{3,2\}$ & $\frac{4}{5}(p+q+3)-8$ & \\
  
 \hline
 
 & $\{5,4\}$ & $\frac{4}{5}(p+q+4)-8$ & \\
 $1$ & $\{3,1\}$ & $\frac{4}{5}(p+q+4)-8$ &$\frac{4}{5}(p+q+4)-4$ \\ 
  & $\{2,2\}$ & $\frac{4}{5}(p+q+4)-8$ & \\
  
 \hline

\end{tabular}
    \caption{The comparison table.}\label{tab all cases}
    \label{tab:my_label}
\end{table}

\medskip

When $ q' \geq 2p' - |\lfloor \frac{p'-2}{2} \rfloor|$, then note that 
\begin{equation}\label{eq balanced unbalanced}
\begin{split}
    q'' = q-q'-1 &\leq q -  2p' + |\lfloor \frac{p'-2}{2}  \rfloor| - 1\\              &\leq q - 2p' + \frac{p'}{2} - 2\\
             &\leq  q - 1.5(p-p''-1)  -2 \\
             &\leq  (q-1.5p) +1.5p'' - 0.5\\
             &< 1.5p'' -1  \\
             &\leq 2p'' - \lfloor \frac{p''-2}{2} \rfloor. 
\end{split}
\end{equation}
The above inequalities follow if we 
have $p' \geq 2$. Otherwise also, for the specific case of $p'=0$ or $1$, one can still prove  $q'' \leq 2p'' - \lfloor \frac{|p''-2|}{2} \rfloor$ directly.

Thus, if $p''+q'' \equiv i'' \pmod 5$, we must have 
\begin{align*}
    \eta[CS_4(\mathcal{P}_4:p', 1, q', 1)] &\oplus \eta[CS_4(\mathcal{P}_4: p'', 1, q'', 1)] \\
&\leq 2p' + \frac{4}{5}(p''+q''-i'') + 2 \lfloor \frac{i''}{4} \rfloor \\
&= 2p' + \frac{4}{5}((p - p' - 1)+ (q - q' - 1) -i'') + 2 \lfloor \frac{i''}{4} \rfloor \\
&= 2p' + \frac{4}{5}(p+q) - \frac{4}{5}((p'+q')+2+i'') + 2 \lfloor \frac{i''}{4} \rfloor \\
&\leq 2p' + \frac{4}{5}(p+q) - \frac{4}{5}((p'+2p'-\lfloor \frac{p'-2}{2} \rfloor)+2+i'') + 2 \lfloor \frac{i''}{4} \rfloor \\
&\leq 2p' + \frac{4}{5}(p+q) - \frac{4}{5}((p'+2p'- \frac{p'}{2}  + 1)+2+i'') + 2 \lfloor \frac{i''}{4} \rfloor \\
&\leq \frac{4}{5}(p+q) - \frac{4}{5}(3 +i'') + 2 \lfloor \frac{i''}{4} \rfloor \\
&< \frac{4}{5}(p+q -i) + 2 \lfloor \frac{i}{4} \rfloor \\
\end{align*}
where $p+q \equiv i \pmod 5$. 
Note that, in the last step, strict inequality holds irrespective of the values of $i$ and $i''$. 
Furthermore, the above inequalities follow if we 
have $p' \neq 1$. Otherwise also, for the specific case of $p'=1$, one can still prove that the left-hand side of the inequality is less than the 
eventual right-hand side of the inequality. 
\end{proof}

Now it remains to show that 
for each $x \in \{1,2, \cdots,  \frac{4}{5}(p+q -i) + 2 \lfloor \frac{i}{4} \rfloor\}$ where $(p+q) \equiv i \pmod 5$ it is possible to find $p'$ and $q'$ such that 
$$\eta[CS_4(\mathcal{P}_4:p', 1, q', 1)] \oplus \eta[CS_4(\mathcal{P}_4: p'', 1, q'', 1)] = x.$$

\begin{lemma}
Suppose that $\eta[CS_4(\mathcal{P}_4:n, 1, q, 1)]$ satisfies the formula given in the statement of Theorem~\ref{th nimber} for all 
$n \leq p-1$ where $p < q < 2p - \lfloor \frac{p-2}{2} \rfloor$.
Then there exists a child of  
the game $CS_4(\mathcal{P}_4:p, 1, q, 1)$ in its game tree having  nimber equal to $l$ where $l \in \{1, 3, \cdots, \frac{4}{5}(p+q -i) + 2 \lfloor \frac{i}{4} \rfloor - 1\}$ and  $(p+q) \equiv i \pmod 5$. 
\end{lemma}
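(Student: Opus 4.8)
The plan is to realise every odd $l$ in the stated range as the nimber of one uniform type of child. Among the moves joining $x_{p'+1}$ to $y_{q'+1}$, consider only the \emph{diagonal} ones, namely those with $q'=p'$; such a move produces the child
$$CS_4(\mathcal{P}_4:p',1,p',1)\ +\ CS_4(\mathcal{P}_4:p'',1,q'',1),\qquad p''=p-p'-1,\quad q''=q-p'-1,$$
so that $q''-p''=q-p\geq 1$, and $p'$ may be taken to be any element of $\{0,1,\dots,p-1\}$. By the induction hypothesis the first summand has nimber $1$, while $\eta[CS_4(\mathcal{P}_4:p'',1,q'',1)]$ is \emph{even} (since $p''\neq q''$, it equals either $2p''$ or the value given by the second line of the formula in Theorem~\ref{th nimber}, both even). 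Since $1\oplus e=e+1$ for every even $e$, this child has nimber $\eta[CS_4(\mathcal{P}_4:p'',1,q'',1)]+1$. Writing $\mu(n):=\tfrac{4}{5}(n-i)+2\lfloor i/4\rfloor$ with $i\equiv n\pmod 5$ (so that $\mu(p+q)$ is the claimed nimber of $CS_4(\mathcal{P}_4:p,1,q,1)$), it therefore suffices to show that, as $p'$ runs over $\{0,\dots,p-1\}$, the even number $\eta[CS_4(\mathcal{P}_4:p'',1,q'',1)]$ attains every value in $\{0,2,\dots,\mu(p+q)-2\}$.

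I would prove this by sweeping $p'$ downward from $p-1$ to $0$, equivalently letting $p''$ increase from $0$ to $p-1$ with $q''=p''+(q-p)$ throughout, and reading $\eta_{p''}:=\eta[CS_4(\mathcal{P}_4:p'',1,q'',1)]$ off the induction hypothesis. For small $p''$ the pair $(p'',q'')$ lies in the third regime $q''\geq 2p''-\lfloor\tfrac{|p''-2|}{2}\rfloor$, where $\eta_{p''}=2p''$ (one checks $\eta_0=0$ and $\eta_1=2$ directly, using $q-p\geq 1$); for larger $p''$ it lies in the middle regime, where $\eta_{p''}=\mu(p''+q'')=\mu\bigl(2p''+(q-p)\bigr)$. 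The two elementary facts to isolate are: (a) $\mu$ is nondecreasing and increases by $0$ or $2$ when its argument increases by $1$, hence also when it increases by $2$ (the jump of $2p''+(q-p)$), so $\mu$ takes only even values and omits none of them between any two of its values; and (b) the two regimes glue with no gap, i.e.\ at the threshold index $p''_0$ one has $\eta_{p''_0}=2p''_0$ and $\eta_{p''_0+1}=\mu(p''_0+q''_0+2)\in\{2p''_0,\,2p''_0+2\}$. Fact (b) I would derive from the boundary identity $\mu\bigl(3p''-\lfloor\tfrac{|p''-2|}{2}\rfloor\bigr)=2p''$ for $p''\geq 2$ (that $\mu$ reproduces the case-$3$ value on the case-$2$/case-$3$ boundary), together with (a) and a short computation bounding $p''_0+q''_0+2$ against $3p''_0-\lfloor\tfrac{|p''_0-2|}{2}\rfloor$ using that $(p''_0+1,q''_0+1)$ is in the middle regime.

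Granting (a) and (b), the map $p''\mapsto\eta_{p''}$ is a nondecreasing sequence of even numbers starting at $\eta_0=0$ and moving up in steps of at most $2$, so it attains every even number between $0$ and $\eta_{p-1}$; it then remains to verify $\eta_{p-1}\geq\mu(p+q)-2$. If $(p-1,q-1)$ is in the middle regime, $\eta_{p-1}=\mu(p+q-2)\geq\mu(p+q)-2$ by (a). Otherwise $\eta_{p-1}=2(p-1)$; one shows that this case occurs only in a thin range of $(p,q)$ — forced, for $p\geq 3$, to have $p$ odd with $q=2p-1-\lfloor\tfrac{p-2}{2}\rfloor$, together with the small case $p=2$ — and that there $2(p-1)\geq\mu(p+q)-2$ by a direct mod-$5$ computation. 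Consequently the set $\{\eta_{p''}+1:0\leq p''\leq p-1\}$ of nimbers of diagonal children contains $\{1,3,\dots,\mu(p+q)-1\}$, which is exactly the claim.

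The hard part will be fact (b): showing that the middle and third regimes meet without leaving a hole. It forces a case analysis on $q-p\bmod 5$, on the parity of $p''$, and on the precise behaviour of $\lfloor\tfrac{|p''-2|}{2}\rfloor$ near the threshold; it is also where the perennial small cases $p''\in\{0,1\}$ (that is, $p'\in\{p-2,p-1\}$) must be dispatched by hand, the absolute value inside the floor being exactly what makes those cases behave, just as in the preceding lemmas. Everything else — fact (a), the endpoint estimate $\eta_{p-1}\geq\mu(p+q)-2$, and the bookkeeping of the residue representatives $1,2,3,4,5$ modulo $5$ — is routine.
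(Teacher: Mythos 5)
Your argument is correct, but it takes a genuinely different route from the paper in the crucial covering step. Both proofs realise odd nimbers through the same ``diagonal'' children $CS_4(\mathcal{P}_4:p',1,p',1)+CS_4(\mathcal{P}_4:p'',1,q'',1)$, whose nimber is $1\oplus(\text{even})$, and your endpoint estimate $\eta_{p-1}\geq\mu(p+q)-2$ is exactly the paper's computation that $n_2=\eta[CS_4(\mathcal{P}_4:p-1,1,q-1,1)]\geq n_1-2$, including the same two cases (middle regime, versus the thin boundary range where $q=2(p-1)-\lfloor\frac{p-3}{2}\rfloor+1$, which forces $p$ odd and in fact gives $2(p-1)=\mu(p+q)$ there). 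After that the mechanisms diverge: the paper never proves anything like your facts (a) and (b); instead it invokes the definition of nimber for the smaller game $CS_4(\mathcal{P}_4:p-1,1,q-1,1)$ — having nimber $n_2\geq n_1-2$, it automatically has children of every nimber below $n_2$, the odd-nimber ones must come from diagonal moves $x_{a+1}$--$y_{a+1}$, and shifting each such move by one index ($x_{a+2}$--$y_{a+2}$) transfers it to a child of $CS_4(\mathcal{P}_4:p,1,q,1)$ with the same nimber, the top value being supplied by the $x_1$--$y_1$ child. You instead sweep over all diagonal children directly and prove the even parts $\eta_{p''}$ form a gap-free nondecreasing sequence from $0$ up past $\mu(p+q)-2$, which requires establishing the step-size and regime-gluing properties of the formula. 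Your deferred fact (b) is true as sketched: the boundary identity $\mu\bigl(3p''-\lfloor\frac{|p''-2|}{2}\rfloor\bigr)=2p''$ holds for $p''\geq 2$, and combining it with the monotonicity of $\mu$ and the defining inequalities of the two regimes at the threshold index gives $\eta_{p''_0+1}\in\{2p''_0,2p''_0+2\}$, with $p''\in\{0,1\}$ checked by hand; so this is extra (routine but real) work rather than a gap. The trade-off is clear: your route is constructive and self-contained — it names, for each odd $l$, a specific move realising it, and avoids both the identification that odd-nimber children of the smaller game are diagonal and the slightly delicate handling of the top value $n_1-1$ — while the paper's mex-and-shift trick buys brevity by outsourcing gap-freeness to the definition of nimber, at the cost of that identification step.
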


\begin{proof}
Let $\frac{4}{5}(p+q -i) + 2 \lfloor \frac{i}{4} \rfloor = n_1$ 
and let $\eta[CS_4(\mathcal{P}_4:p-1, 1, q-1, 1)] = n_2$. 
We will show that $n_2 \geq n_1 - 2$. 

If $p-1 < q-1 < 2(p-1) -  \lfloor \frac{p-3}{2} \rfloor$, then, assuming $(p+q-2) \equiv i_2 \pmod 5$, we have  
\begin{align*}
    n_2 - n_1 &=  \left(\frac{4}{5}(p-1+q-1 -i_2) + 2 \lfloor \frac{i_2}{4} \rfloor\right)  - \left(\frac{4}{5}(p+q -i) + 2 \lfloor \frac{i}{4} \rfloor\right)\\
    &= \frac{4}{5}(i-i_2-2) + 2 \lfloor \frac{i_2}{4} \rfloor - 2 \lfloor \frac{i}{4} \rfloor.
\end{align*}
Notice that, $i = 3,4,5$ implies $i_2 = i-2$. Thus
$$n_2 - n_1 = 2 \lfloor \frac{i_2}{4} \rfloor - 2 \lfloor \frac{i}{4} \rfloor \geq - 2.$$
If $i=1,2$, then $i_2 = i+3$ which implies
$$n_2 - n_1 = -4  + 2 \lfloor \frac{i_2}{4} \rfloor - 2 \lfloor \frac{i}{4} \rfloor \geq -2 .$$
If $2(p-1) -  \lfloor \frac{p-3}{2} \rfloor \leq q-1 < 2p -  \lfloor \frac{p-2}{2} \rfloor$, then as the maximum difference between the lower and upper bound of $q-1$ is at most $1$, and as $q-1$ is an integer, we can say that 
$$q = 2(p-1) -  \lfloor \frac{p-3}{2} \rfloor +1.$$
Hence
\begin{align*}
    n_2 - n_1 &=  2(p-1)  - \frac{4}{5}(p+q -i) + 2 \lfloor \frac{i}{4} \rfloor\\
    &\geq 2p -2 - \frac{4}{5}(p + 2p -2- \lfloor \frac{p-3}{2} \rfloor +1 -i) + 2 \lfloor \frac{i}{4} \rfloor\\
    &\geq 2p -2 - \frac{4}{5}(3p -1 - (\frac{p-3}{2}) + 0.5  -i) + 2 \lfloor \frac{i}{4} \rfloor\\
    &= 2p-2-\frac{4}{5}(\frac{5p}{2}+1-i)+ 2 \lfloor \frac{i}{4} \rfloor\\
    &=-2-\frac{4}{5}(1-i)+ 2 \lfloor \frac{i}{4} \rfloor\\
    &\geq -2.
\end{align*}

Therefore, $n_2 \geq n_1 - 2$. That means, 
for each non-negative integer less than $n_2$
the game 
$CS_4(\mathcal{P}_4:p-1, 1, q-1, 1)$ 
has a child  having nimber equal to it. In particular, 
for each non-negative odd integer less than $n_2$, $CS_4(\mathcal{P}_4:p-1, 1, q-1, 1)$ 
has a child  having nimber equal to it.

Let $l \in \{1, 3, \cdots, n_1-3 \}$ be an odd integer. 
We know that, $CS_4(\mathcal{P}_4:p-1, 1, q-1, 1)$  has a child with nimber $l$. However, we also know that the child must be 
obtained by connecting $x_{a+1}$ and $y_{a+1}$ with a curve, and its nimber is given by
$$\eta[CS_4(\mathcal{P}_4:a, 1, a, 1)] \oplus \eta[CS_4(\mathcal{P}_4: p-a-2, 1, q-a-2, 1)] = l.$$
As $\eta[CS_4(\mathcal{P}_4:a, 1, a, 1)] = 1$, we must have 
$$\eta[CS_4(\mathcal{P}_4: p-a-2, 1, q-a-2, 1)] = l-1.$$
Thus the child of $CS_4(\mathcal{P}_4:p, 1, q, 1)$
obtained by connecting $x_{a+2}$ and $y_{a+2}$ has nimber 
$$\eta[CS_4(\mathcal{P}_4:a+1, 1, a+1, 1)] \oplus \eta[CS_4(\mathcal{P}_4: p-a-2, 1, q-a-2, 1)] = l.$$
Moreover, the
 child of $CS_4(\mathcal{P}_4:p, 1, q, 1)$
obtained by connecting $x_1$ and $y_{1}$ has nimber 
$$\eta[CS_4(\mathcal{P}_4:0, 1, 0, 1)] \oplus \eta[CS_4(\mathcal{P}_4: p-1, 1, q-1, 1)] = n_1-1.$$

Thus, for each $l \in \{1, 3, \cdots, n_1-1 \}$
there exists a child of $CS_4(\mathcal{P}_4:p, 1, q, 1)$ 
having nimber equal to $l$. 
\end{proof}

Last but not the least, we are going to show that the game 
$CS_4(\mathcal{P}_4:p, 1, q, 1)$  has a child with nimber $l$ for all even positive integers $l$ less than 
$\frac{4}{5}(p+q -i) + 2 \lfloor \frac{i}{4} \rfloor\}$ where  $(p+q) \equiv i \pmod 5$.

\begin{lemma}
Suppose that $\eta[CS_4(\mathcal{P}_4:n, 1, q, 1)]$ satisfies the formula given in the statement of Theorem~\ref{th nimber} for all 
$n \leq p-1$
where $p < q < 2p - \lfloor \frac{|p-2|}{2} \rfloor$.
Then there exists a child of  
the game $CS_4(\mathcal{P}_4:p, 1, q, 1)$ in its game tree having  nimber equal to $l$ where $l \in \{2, 4, \cdots, \frac{4}{5}(p+q -i) + 2 \lfloor \frac{i}{4} \rfloor - 2\}$ and  $(p+q) \equiv i \pmod 5$. 
\end{lemma}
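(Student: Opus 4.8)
The plan is to mirror the strategy of the previous lemma but to manufacture even nimbers instead of odd ones. The backbone is the family of moves of $CS_4(\mathcal{P}_4:p,1,q,1)$ that connect the tip $x_1$ to the tip $y_{b+1}$, for $b\in\{1,\dots,q-1\}$. Such a move splits the game into the sum of $CS_4(\mathcal{P}_4:0,1,b,1)$ and $CS_4(\mathcal{P}_4:p-1,1,q-b-1,1)$, and since $\eta[CS_4(\mathcal{P}_4:0,1,b,1)]=0$ for $b\ge 1$ by the base lemma, the resulting child has nimber $\eta[CS_4(\mathcal{P}_4:p-1,1,q-b-1,1)]$. Writing $c=q-b-1$, this shows that $\eta[CS_4(\mathcal{P}_4:p-1,1,c,1)]$ is the nimber of a child of $CS_4(\mathcal{P}_4:p,1,q,1)$ for every $c\in\{0,1,\dots,q-2\}$.

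First I would read these values off via the induction hypothesis as $c$ increases from $0$: while $c$ lies in the ``unbalanced'' range of the pair $(p-1,c)$ one gets $\eta=2c$, yielding $0,2,4,\dots$; once $c$ enters the ``balanced'' range the value follows the $\tfrac45(\cdot)$ staircase of the theorem statement; and $\eta=1$ occurs exactly once, at $c=p-1$. The key sub-claim is that the \emph{even} values so obtained form a gap-free initial segment $\{0,2,4,\dots\}$. This rests on two routine facts: the two regime transitions of the formula are ``continuous'', i.e. the value jumps by at most $2$ there --- this is the same boundary estimate already carried out for the cases $q=p$ and $q\ge 2p-\lfloor\frac{p-2}{2}\rfloor$ --- and the explicit shape of the $\tfrac45$-staircase shows every even value is attained for at least two consecutive values of its argument, so deleting the single argument $c=p-1$ (where the value is the odd number $1$ rather than an even number) removes no even value from the list.

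The top of this segment is $\eta[CS_4(\mathcal{P}_4:p-1,1,q-2,1)]$, using $p<q<2p-\lfloor\frac{p-2}{2}\rfloor$ to verify, exactly as in the previous lemma, that the pair $(p-1,q-2)$ stays in the balanced range (it degenerates to the balanced value $1$ only when $q=p+1$, in which case one takes $c=p-2$ instead). A short case analysis on $i\equiv p+q\pmod 5$, organized exactly as in Table~\ref{tab all cases}, then shows that this top value equals $n_1:=\frac45(p+q-i)+2\lfloor\frac i4\rfloor$ in most residue classes but only $n_1-4$ in the remaining ones. For those deficient residues I would add one extra explicit child: for instance, connecting $x_2$ to $y_3$ splits the game into $CS_4(\mathcal{P}_4:1,1,2,1)$, of nimber $2$, and $CS_4(\mathcal{P}_4:p-2,1,q-3,1)$, whose nimber is $n_1-4\equiv 0\pmod 4$ when $p+q\equiv 1$, so that child has nimber $2\oplus(n_1-4)=n_1-2$; symmetric choices of patch move handle the other deficient residues. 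Combining the backbone family with these finitely many patch moves and invoking the no-gap observation produces a child of $CS_4(\mathcal{P}_4:p,1,q,1)$ of nimber $l$ for every even $l\in\{2,4,\dots,n_1-2\}$, as required.

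I expect the main obstacle to be the boundary bookkeeping. One must check rigorously that the even values produced by the backbone family leave no gap across both regime transitions and across the lone odd blip at $c=p-1$, and --- more delicately --- that the extremal value $n_1-2$ is genuinely realized in \emph{every} residue class of $p+q$ modulo $5$: the backbone family falls two short in some of them, and then one has to solve $2\oplus x=n_1-2$ with a nimber $x=\eta[CS_4(\mathcal{P}_4:\cdot,1,\cdot,1)]$ that is actually attainable under the constraints $p''=p-p'-1,\ q''=q-q'-1$. This is precisely the kind of residue-by-residue verification already encapsulated in the comparison table, so I would present the result the same way.
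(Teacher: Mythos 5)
Your route is genuinely different from the paper's. The paper splits on whether $(q+1) \geq 2(p-1) - \lfloor \frac{|(p-1)-2|}{2} \rfloor$: in the first case it realizes the even values $2p'$ via children of the form $CS_4(\mathcal{P}_4:p',1,q-1,1) + CS_4(\mathcal{P}_4:p-p'-1,1,0,1)$ (after showing $n_1 \leq 2(p-1)$ there), and in the second case it uses a transfer argument — by induction $\eta[CS_4(\mathcal{P}_4:p-1,1,q+1,1)] = n_1$, so that game has children of every even nimber below $n_1$, and each such child $CS_4(\mathcal{P}_4:p',1,q',1)+CS_4(\mathcal{P}_4:p-p'-2,1,q-q',1)$ is converted into the child $CS_4(\mathcal{P}_4:p',1,q',1)+CS_4(\mathcal{P}_4:p-p'-1,1,q-q'-1,1)$ of $CS_4(\mathcal{P}_4:p,1,q,1)$ with the same nimber, since in the balanced regime the formula depends only on the parameter sum. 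Your single ``backbone'' sweep (connect $x_1$ to $y_{q'+1}$, reading off $\eta[CS_4(\mathcal{P}_4:p-1,1,c,1)]$ for $c=0,\dots,q-2$) avoids that case split and is a legitimate alternative skeleton; the induction hypothesis does cover all these values.

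However, as written the proposal has concrete gaps. First, the heart of the lemma — that the backbone values hit every even number up to the required top — is exactly the ``gap-free initial segment'' claim, and you only assert it; the step-size-$\leq 2$ behaviour across the unbalanced-to-balanced transition, and the fact that deleting the blip at $c=p-1$ loses no even value, need the same kind of explicit estimates the paper carries out elsewhere, including small-$p$ and $q=p+1$ boundary cases. Second, your statement that the top backbone value is $n_1$ in most residue classes is incorrect: at $c=q-2$ the sum is $p+q-3$, and the staircase gives $n_1-2$ for $i\in\{2,3,4,5\}$ and $n_1-4$ for $i\equiv 1 \pmod 5$; this happens to be exactly enough, but the computation must actually be done. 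Third, your patch for the deficient class relies on $\eta[CS_4(\mathcal{P}_4:p-2,1,q-3,1)] = n_1-4$, which requires $(p-2,q-3)$ to lie in the balanced regime and fails when $q=p+1$: for $(p,q)=(5,6)$ one has $i=1$, $n_1=8$, the backbone tops out at $4$, and your patch yields $\eta[CS_4(\mathcal{P}_4:3,1,3,1)]\oplus\eta[CS_4(\mathcal{P}_4:1,1,2,1)] = 1\oplus 2 = 3$, not $6$; a different move is needed there (e.g.\ the child $CS_4(\mathcal{P}_4:2,1,1,1)+CS_4(\mathcal{P}_4:2,1,4,1)$ of nimber $2\oplus 4 = 6$). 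It is worth noting that the paper's own Case-A patch uses the very same decomposition $CS_4(\mathcal{P}_4:p-2,1,q-3,1)+CS_4(\mathcal{P}_4:1,1,2,1)$ and silently assumes the same balancedness, so this boundary sub-case deserves explicit treatment in either approach.
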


\begin{proof}
Let $\frac{4}{5}(p+q -i) + 2 \lfloor \frac{i}{4} \rfloor = n_1$. We want to show that 
$$\eta[CS_4(\mathcal{P}_4:n, 1, q, 1)]=n_1.$$

To do so, we will consider some cases. The first case is if 
$$(q+1) \geq 2(p-1) - \lfloor \frac{|(p-1)-2|}{2} \rfloor.$$
This implies 
\begin{align*}
      q-1 &\geq 2(p-1) - \lfloor \frac{|(p-1)-2|}{2} \rfloor -2 \\
          &\geq 2(p-3) - \lfloor \frac{|(p-3)-2|}{2} \rfloor.
\end{align*}

Additionally, if $p$ is even, then 
\begin{align*}
      q-1 &\geq 2(p-1) - \lfloor \frac{|(p-1)-2|}{2} \rfloor -2 \\
          &\geq 2(p-2) - \lfloor \frac{|(p-2)-1|}{2} \rfloor.
\end{align*}

As we also know that 
$$q < 2p - \lfloor \frac{|p-2|}{2} \rfloor.$$
we have
\begin{align*}
    n_1 &= \frac{4}{5}(p+q -i) + 2 \lfloor \frac{i}{4} \rfloor\\
        &<  \frac{4}{5}(p+2p - \lfloor \frac{|p-2|}{2} \rfloor -i) + 2 \lfloor \frac{i}{4} \rfloor\\
\implies n_1 &\leq \frac{4}{5}(3p - \frac{p}{2} + 1.5 - i) + 2 \lfloor \frac{i}{4} \rfloor -1\\
        &\leq 2p + \frac{4}{5}(1.5 - i) + 2 \lfloor \frac{i}{4} \rfloor -1\\
\implies n_1 &< 2p\\
\implies n_1 &\leq 2(p-1).
\end{align*}

In these cases, consider the child of 
$CS_4(\mathcal{P}_4:p, 1, q, 1)$ which can be written as the sum of 
$CS_4(\mathcal{P}_4:p', 1, q-1, 1)$ and 
$CS_4(\mathcal{P}_4:p-p'-1, 1, 0, 1)$ where $q-1 \geq 2p' - \lfloor \frac{|p'-2|}{2} \rfloor$.  Notice that 
$$\eta[CS_4(\mathcal{P}_4:p-p'-1, 1, 0, 1)] = 0$$ 
according to equation~(\ref{eq nim 01q1}) and 
$$\eta[CS_4(\mathcal{P}_4:p', 1, q-1, 1)] = 2p'.$$
Observe that, this will cover all the even numbers up to 
$2(p-3)$, and up to $2(p-2)$ if $p$ is even. 

Therefore, we only need to think about the case when $p$ is odd and $n_1 = 2(p-1)$. In this case, $n_1$ is divisible by $4$. 
Consider the child obtained by connecting $x_{p-1}$ with $y_{q-2}$. 
This will imply that $CS_4(\mathcal{P}_4:p, 1, q, 1)$ has a child having nimber 
$$\eta[CS_4(\mathcal{P}_4:p-2, 1, q-3, 1)] \oplus \eta[CS_4(\mathcal{P}_4:1, 1, 2, 1)] = (n_1-4) \oplus 2 = n_1  - 2.$$ 
Thus we are done when 
$(q+1) \geq 2(p-1) - \lfloor \frac{|(p-1)-2|}{2} \rfloor$. 

\medskip

On the other hand, if $(q+1) < 2(p-1) - \lfloor \frac{|(p-1)-2|}{2} \rfloor$, then $p < q$ implies $p-1 < q+1$, 
by induction hypothesis we have 
$\eta[CS_4(\mathcal{P}_4:p-1, 1, q+1, 1)] = n_1$. 

Thus, in particular, for each $l \in \{2, 4, \cdots, n_1-2\}$, there exists 
a child of $CS_4(\mathcal{P}_4:p-1, 1, q+1, 1)$ having nimber equal to 
$l$. 
Observe that, this child must be a sum of two games of the form 
$CS_4(\mathcal{P}_4:p', 1, q', 1)$ and 
$CS_4(\mathcal{P}_4:p-p'-2, 1, q-q', 1)$. 
Due to equation~(\ref{eq balanced unbalanced}), we know that one of these children will satisfy the conditions of the
second line of the formula given in Theorem~\ref{th nimber}. 

Next consider the child of $CS_4(\mathcal{P}_4:p, 1, q, 1)$ obtained by connecting $x_{p'+1}$ and $y_{q'+1}$ 
can be expressed as a sum of 
the games 
$CS_4(\mathcal{P}_4:p', 1, q', 1)$ and 
$CS_4(\mathcal{P}_4:p-p'-1, 1, q-q'-1, 1)$. 
Due to equation~(\ref{eq balanced unbalanced}), we know that one of these children will satisfy the conditions of the second line of 
the formula given in Theorem~\ref{th nimber}. 

In fact as $p \neq q$, it is possible to assume 
without loss of generality that both 
$CS_4(\mathcal{P}_4:p-p'-2, 1, q-q', 1)$ and 
$CS_4(\mathcal{P}_4:p-p'-1, 1, q-q'-1, 1)$ 
satisfy the conditions of the
second line of the formula given in Theorem~\ref{th nimber}.

This implies 
\begin{align*}
    \eta[CS_4(\mathcal{P}_4:p-p'-2, 1, q-q', 1)] &= \frac{4}{5}(p+q-p'-q'-2 - i_1) + 2 \lfloor \frac{i_1}{2} \rfloor\\ 
    &= \eta[CS_4(\mathcal{P}_4:p-p'-1, 1, q-q'-1, 1)].
\end{align*}

Thus, we can conclude that this game, expressed as a sum of $CS_4(\mathcal{P}_4:p', 1, q', 1)$ and 
$CS_4(\mathcal{P}_4:p-p'-1, 1, q-q'-1, 1)$, 
which is a child of 
$CS_4(\mathcal{P}_4:p, 1, q, 1)$, has nimber equal to $l$. 
\end{proof}

\subsubsection{Concluding the proof}
\noindent \textit{Proof of Theorem~\ref{th nimber}}.  The
proof follows from the induction method presented above. \qed

\section{Nimber and winning strategy for $BS_2(\mathcal{P}_4:p,q)$}\label{sec nimber of BS}
Using Theorem~\ref{th nimber} we can calculate the nimbers of the game $BS_2(\mathcal{P}_4:p,q)$.

\begin{theorem}\label{cor nimber of bs}
The nimber of the game $BS_2(\mathcal{P}_4:p,q)$ is $0$ for all $p,q \geq 3$. 
\end{theorem}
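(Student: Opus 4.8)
The plan is to use the observation (made in Section~\ref{sec circular}) that the first move of Player~$1$ in $BS_2(\mathcal{P}_4:p,q)$ is forced up to isomorphism. Denote by $G_1$ the position reached after Player~$1$ joins $x_1$ to $y_1$ and places the crossbar $c_1$. Then $G_1$ is, up to isomorphism, the \emph{only} child of $BS_2(\mathcal{P}_4:p,q)$ in its game tree, so the nimber of $BS_2(\mathcal{P}_4:p,q)$ equals $0$ if $\eta[G_1]\neq 0$ and equals $1$ if $\eta[G_1]=0$. Hence it suffices to exhibit a \emph{single} child of $G_1$ whose nimber is $0$; this already forces $\eta[G_1]\geq 1$, and therefore $\eta[BS_2(\mathcal{P}_4:p,q)]=0$.

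First I would point out a legal move of Player~$2$ from $G_1$, namely joining the two extreme open tips $x_2$ and $y_q$ with a new crossbar $c_2$. This move is admissible: it keeps the drawing planar, and the only cycle it closes is the $4$-cycle $x\,c_1\,y\,c_2$, so the girth stays $\geq 4$. By the reduction described in Section~\ref{sec circular}, applied with the parameters $i=2$ and $j=q$, the resulting position decomposes as the disjoint sum
\[
CS_4(\mathcal{P}_4:\,0,1,q-2,1)\ \ +\ \ CS_4(\mathcal{P}_4:\,p-2,1,0,1).
\]
Since $p,q\geq 3$, both $p-2\geq 1$ and $q-2\geq 1$ hold, so by the base-case lemma at the start of Section~\ref{Nimbersec} (which gives $\eta[CS_4(\mathcal{P}_4:0,1,m,1)]=0$ for every $m\geq 1$), together with the left--right symmetry $\eta[CS_4(\mathcal{P}_4:a,1,b,1)]=\eta[CS_4(\mathcal{P}_4:b,1,a,1)]$ noted earlier, each of the two summands has nimber $0$. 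Consequently this child of $G_1$ has nimber $0\oplus 0=0$, which completes the argument as explained above.

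The one step that needs care in the write-up is the justification that the position obtained after these two joins really is a \emph{disjoint sum} of two {\sc Circular sprout} games: one must check that the $4$-cycle $x\,c_1\,y\,c_2$ genuinely separates the plane into two regions whose future plays cannot interact (no admissible curve can cross the separating cycle), and that the girth-$4$ restriction is local to each side (any forbidden short cycle lives entirely inside one of the two regions). This is precisely the structural reduction already set up in Section~\ref{sec circular}, so in the write-up I would simply invoke it rather than reprove it. Finally, I would remark that this argument uses only the trivial base-case nimbers of {\sc Circular sprout}, not the full strength of Theorem~\ref{th nimber}; the latter would be needed only to pin down the exact value of $\eta[G_1]$, that is, to describe the entire game tree of $BS_2(\mathcal{P}_4:p,q)$, which is more than the statement requires.
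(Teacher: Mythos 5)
Your proof is correct, and its skeleton is the same as the paper's: the first move is forced up to renaming of tips, so the root has a unique child $G_1$, and it suffices to exhibit one response of Player~2 leading to a position of nimber $0$, which forces $\eta[G_1]\neq 0$ and hence $\eta[BS_2(\mathcal{P}_4:p,q)]=0$. The difference lies in the witness you choose and in what you need to evaluate it. The paper takes a second move whose two circular components have the form $CS_4(\mathcal{P}_4:a,1,1,1)$ and $CS_4(\mathcal{P}_4:1,1,b,1)$, so that their (equal, nonzero) nimbers cancel, and it explicitly invokes Theorem~\ref{th nimber} to compute them. You instead take the extreme move $x_2$--$y_q$, which by the reduction of Section~\ref{sec circular} splits the position into $CS_4(\mathcal{P}_4:0,1,q-2,1)+CS_4(\mathcal{P}_4:p-2,1,0,1)$; since $p,q\geq 3$, each summand has nimber $0$ by the base-case lemma of Section~\ref{Nimbersec} together with the left--right symmetry, so the full strength of Theorem~\ref{th nimber} is never used. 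What each approach buys: yours is more elementary and self-contained (the result stands even before the long induction is carried out), while the paper's phrasing ties the statement to its main nimber characterization, of which any zero-XOR witness is an instance. Your closing caveat about verifying that the $4$-cycle genuinely separates the two regions is exactly the reduction already set up in Section~\ref{sec circular}, so invoking it, as you propose, is the right level of detail.
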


\begin{proof}
Observe that in the game tree of 
$BS_2(\mathcal{P}_4:p,q)$, the root has exactly one child as the very first move is unique up to renaming of open tips. 
Moreover, that child will have a child 
which can be expressed as a sum of $CS_4(p,1,1,1)$ and $CS_4(1,1,q,1)$. As $\eta[CS_4(t_1,1,1,1)]\oplus \eta[CS_4(1,1,t_3,1)] = 0$ due to 
Theorem~\ref{th nimber}, the child of the root must have a non-zero nimber, and thus the root must have its nimber equal to $0$. 
\end{proof}

As a direct corollary, 
we can figure out which player has a winning strategy in the game $BS_2(\mathcal{P}_4:p,q)$.

\begin{corollary}
    The second player has winning strategy in the game 
    $BS_2(\mathcal{P}_4:p,q)$ for all $p,q \geq 3$.
\end{corollary}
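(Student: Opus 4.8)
The plan is to obtain the statement as an immediate consequence of Theorem~\ref{cor nimber of bs} together with the standard dictionary between Sprague--Grundy values and winning strategies. First I would check that $BS_2(\mathcal{P}_4:p,q)$ falls within the scope of that dictionary: it is \emph{impartial}, since the set of legal moves at a position depends only on the position and not on which player is to move; it is \emph{finite}, since by Theorem~\ref{theorem1.8} every play terminates after at most $p+q$ moves; and it uses the \emph{normal play convention}, the first player unable to move losing. Hence the result recalled in Section~\ref{sec circular} (see~\cite{albert2019lessons}) applies: from a position $X$, the second player has a winning strategy if and only if $\eta[X]=0$.

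With this in hand the argument is one line. By Theorem~\ref{cor nimber of bs} we have $\eta[BS_2(\mathcal{P}_4:p,q)]=0$ for all $p,q\geq 3$, so the second player has a winning strategy.

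It is worth spelling out what this strategy looks like, since it can be read off from the proof of Theorem~\ref{cor nimber of bs}. Player~$1$'s opening move is forced up to relabelling of the open tips, and Player~$2$ then responds by joining an open tip of the first spot to an open tip of the second so that the position splits as the disjoint sum $CS_4(\mathcal{P}_4:p,1,1,1)+CS_4(\mathcal{P}_4:1,1,q,1)$; by Theorem~\ref{th nimber} (using the base case $p=1$ for {\sc Circular sprout}) each summand has nimber $2$ when $p,q\geq 2$, so the sum has nimber $2\oplus 2=0$. From then on Player~$2$ plays the usual balancing strategy on the disjoint sum of {\sc Circular sprout} components: after any move of Player~$1$, Player~$2$ makes a move --- in the appropriate component, which may itself have split further --- that restores the overall nimber to $0$. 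Since the game is finite, Player~$1$ eventually faces a position with no legal move and loses.

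On the difficulty: there is essentially no obstacle at this point --- all the real work has been front-loaded into Theorem~\ref{cor nimber of bs}, which in turn rests on the lengthy structural induction behind Theorem~\ref{th nimber}. The only things one must double-check are the routine verifications that the game is a finite normal-play impartial game and that the cited component nimbers do XOR to $0$ in the range $p,q\geq 3$; after that, the conclusion is a direct invocation of the nimber-zero characterization.
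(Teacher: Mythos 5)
Your proof is correct and is essentially the paper's own argument: the paper likewise deduces the corollary in one line from $\eta[BS_2(\mathcal{P}_4:p,q)]=0$ (Theorem~\ref{cor nimber of bs}) together with the standard fact that an impartial normal-play game with nimber $0$ is a second-player win. Your added sketch of the explicit balancing strategy (and the verification of finiteness and impartiality) goes beyond what the paper writes but introduces nothing that the corollary's proof actually depends on.
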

     
\begin{proof}
    As $\eta[BS_2(\mathcal{P}_4:p,q)] = 0$ for all $p,q \geq 3$, the second player must have a winning strategy.
\end{proof}

\section{Conclusion}\label{sec conclusion}
\noindent (1) In Section~\ref{intro} we noticed that even though the game {\sc Brussels sprout} is defined on planar graphs, it can be played with the restriction of its resultant graph belonging to any hereditary graph family. Naturally, such a generalization of the game {\sc Sprout} can also be considered. It is noteworthy, that such generalizations of {\sc Sprout}, especially, its play on different surfaces, have been considered before~\cite{lemoine2008sprouts}.
It can be interesting to play both the games for hereditary graph classes such as: graphs having bounded maximum degree, graphs having bounded maximum average degree, planar graphs having girth at least $g$ (for some $g \geq 4$) to name a few.

\medskip
\noindent (2)  In Corollary~\ref{O_k}, we show that the winner of the game $BS_n(\mathcal{O}_k: t_1, t_2, \cdots, t_n)$ for $k \geq 1$, does not change even if we vary $k$. 
A similar phenomenon has been noticed in the game 
{\sc Sprout} through a computer check~\cite{applegatecomputer}. To elaborate, a computer check reported in~\cite{lemoine2012nimbers} confirmed that the winner of the game {\sc Sprout} (starting with $n$ dots) does not change for small values of $n$. 
It is possible that Theorem~\ref{th non-orientable surface} carries a theoretical insight of the underlying reasoning behind this phenomenon.

    \medskip
    
\noindent (3)  One of the major contributions of this article is to find out the nimber of the game $CS_4(\mathcal{P}_4:p, 1, q, 1)$ for all values of 
    $p, q \geq 0$ (see Section~\ref{sec nimber of circular}).  We used the method of strong induction for the proof, and we wonder if it is possible to attack Conjecture~\ref{conj sprouts} 
    using the same technique. 
    One of the major breakthroughs in applying this technique can come from understanding how it may be possible to write a partially played  {\sc Sprout} game as a sum of two other (similar) games. 

    \medskip
    
\noindent (4)  It may be interesting to explore a game where
    we start with some dots and some spots having various numbers of open tips. The reason why this makes sense and is relevant can be demonstrated via the following example. 
    \begin{figure}
    \centering
    \medskip

\begin{tikzpicture} [scale=0.5, transform shape]
\draw [thick] (2.5,0) ellipse (1.5 and 1);
\draw [thick] (10.5,0) ellipse (1.5 and 1);
\draw [thick] (6.5,-4) ellipse (1.5 and 1);
\node [circle, red, thick, draw, inner sep=0.1mm, minimum size=6mm] (b) at (1,0) {};
\node [circle, red, thick, draw, inner sep=0.1mm, minimum size=6mm] (b) at (12,0) {};
\node [circle, red, thick, draw, inner sep=0.1mm, minimum size=6mm] (b) at (6.5,-6) {};

\node[circle, draw,inner sep=0.1mm,minimum size=3mm, fill] (c1) at (1,0) {};
\node[circle, draw,inner sep=0.1mm,minimum size=3mm, fill] (c2) at (4,0) {};
\node[circle, draw,inner sep=0.1mm,minimum size=3mm, fill] (c3) at (9,0) {};
\node[circle, draw,inner sep=0.1mm,minimum size=3mm, fill] (c4) at (12,0) {};
\node[circle, draw,inner sep=0.1mm,minimum size=3mm, fill] (c5) at (5,-4) {};
\node[circle, draw,inner sep=0.1mm,minimum size=3mm, fill] (c6) at (8,-4) {};
\node[circle, draw,inner sep=0.1mm,minimum size=3mm, fill] (c7) at (6.5,0) {};
\node[circle, draw,inner sep=0.1mm,minimum size=3mm, fill] (c8) at (6.5,-3) {};
\node[circle, draw,inner sep=0.1mm,minimum size=3mm, fill] (c10) at (6.5,-6) {};

\draw[thick] (c2) -- (c3);
\draw[thick] (c7) -- (c8);
\path[thick] (c5) edge [bend right=100,looseness=2] (c6);


\end{tikzpicture}
\qquad
\begin{tikzpicture}
    \node (c7) at (0,0) {$=$};
    \node[circle, white,draw,inner sep=0.1mm,minimum size=3mm, fill] (c7) at (0,-2) {};
\end{tikzpicture}
\qquad
\begin{tikzpicture}
\node[circle, draw,inner sep=0.1mm,minimum size=3mm, fill] (c7) at (0,0) {};
\node[white,circle, draw,inner sep=0.1mm,minimum size=3mm, fill] (c9) at (-1,1) {};
\node[white,circle, draw,inner sep=0.1mm,minimum size=3mm, fill] (c10) at (1,1) {};
\node[white,circle, draw,inner sep=0.1mm,minimum size=1mm, fill] (c12) at (1,-2) {};

\node [circle, red, thick, draw, inner sep=0.1mm, minimum size=3mm] (b) at (-1,1) {};
\node [circle, red, thick, draw, inner sep=0.1mm, minimum size=3mm] (b1) at (1,1) {};
\node [circle, red, thick, draw, inner sep=0.1mm, minimum size=3mm] (b2) at (0,-1) {};
\node[circle, white,draw,inner sep=0.1mm,minimum size=2mm, fill] (c11) at (0,-1) {};
\draw[thick] (c9) -- (c7);
\draw[thick] (c10) -- (c7);
\draw[thick] (c11) -- (c7);
\end{tikzpicture}
    \caption{A configuration of {\sc Sprout} that can be replaced by a spot with three open tips.}
    \label{fig replacing C}
\end{figure}
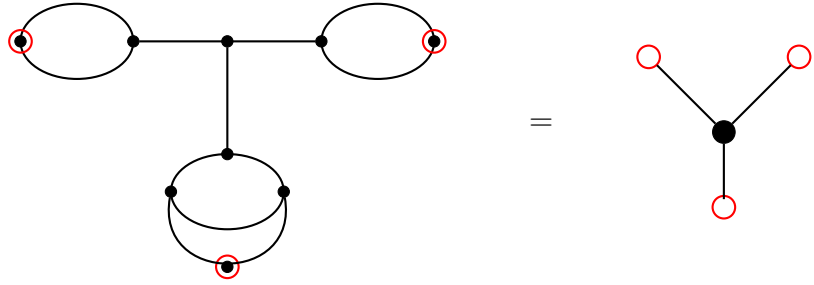
    A partially played 
    {\sc Sprout} game may contain a connected component $C$ as depicted in Fig.~\ref{fig replacing C}. In such a scenario, it is equivalent to considering the game obtained by replacing $C$ with a spot having $3$ open tips.

    \medskip
    
\noindent (5)  In this article we introduced the {\sc Circular sprout} game and studied it for 
    the family of triangle-free planar graphs. The version we studied had only two spots. Finding the nimber of this 
    turned out to be a challenging problem. Hence, finding the nimber of the {\sc Circular sprout} games with multiple spots for triangle-free planar graphs, and other hereditary families of graphs are natural open problems.

    \medskip
\noindent (6) Last but not the least, is it possible to consider a circular version of the game {\sc Sprout}? That may actually help us to understand the nature of certain subgames of {\sc Sprout}. 

\medskip 

\noindent
\textbf{Acknowledgements:}  We thank Prof. Gary MacGillivray for his valuable suggestions. This work is partially supported  by
the following projects:``MA/IFCAM/18/39'', ``SRG/
2020/001575'', ``MTR/2021/000858'', ``NBHM/RP-8 (2020)/Fresh'', and ``ASEAN 1000 PhD Fellowship''.
\small

\bibliographystyle{abbrv}
\bibliography{references}

\end{document}